\newtheorem{theorem}{Theorem}[section]
\newtheorem{prop}[theorem]{Proposition}
\newtheorem{corollary}[theorem]{Corollary}
\newtheorem{question}[theorem]{Question}
\newtheorem{conjecture}[theorem]{Conjecture}
\newtheorem{lemma}[theorem]{Lemma}
\newcommand{\Abs}[1]{\left\lVert #1 \right\rVert}
\DeclareMathOperator{\ML}{ML}
\title[]{Barely lonely runners and very lonely runners}
\keywords{Lonely Runner Problem, Diophantine approximation.}
\subjclass[2010]{11K60 (primary), 11J13, 11J71, 52C07.}
\begin{document}

\author[]{Noah Kravitz}
\address[]{Grace Hopper College, Yale University, New Haven, CT 06510, USA}
\email{noah.kravitz@yale.edu}

\begin{abstract}

We introduce a sharpened version of the well-known Lonely Runner Conjecture of Wills and Cusick.  Given a real number $x$, let $\Vert x \Vert$ denote the distance from $x$ to the nearest integer.  For each set of positive integer speeds $v_1, \ldots, v_n$, we define the associated maximum loneliness to be
$$\ML(v_1, \ldots, v_n)=\max_{t \in \mathbb{R}}\min_{1 \leq i \leq n} \Vert tv_i \Vert.$$
The Lonely Runner Conjecture asserts that
$$\ML(v_1, \ldots, v_n) \geq \frac{1}{n+1}$$
for all choices of $v_1, \ldots, v_n$.  If the Lonely Runner Conjecture is true, then the quantity $1/(n+1)$ is the best possible, for there are known equality cases with $\ML(v_1, \ldots, v_n)=1/(n+1)$.  A natural but (to our knowledge) hitherto unasked question is: 
\begin{quote}
If $v_1, \ldots, v_n$ satisfy the Lonely Runner Conjecture but are not an equality case, must $\ML(v_1, \ldots, v_n)$ be uniformly bounded away from $1/(n+1)$?
\end{quote}
We conjecture that, contrary to what one might expect, this question has an affirmative answer that reflects an underlying rigidity of the problem.  More precisely, we conjecture that for each choice of $v_1, \ldots, v_n$, we have either $\ML(v_1, \ldots, v_n)=s/(ns+1)$ for some $s \in \mathbb{N}$ or $\ML(v_1, \ldots, v_n) \geq 1/n$.  Our main results are: confirming this stronger conjecture for $n \leq 3$; and confirming it for $n=4$ and $n=6$ in the case where one speed is much faster than the rest.  We also obtain a number of related results.
\end{abstract}

\maketitle

\section{Introduction}

The Lonely Runner Problem has been a popular research topic ever since it was introduced by Wills \cite{wills} and Cusick \cite{cusick}.  Its name comes from the following non-technical formulation.  Suppose $n$ runners start at the same point on a circular track of length $1$ and begin to run around the track at pairwise distinct constant speeds.  We deem a runner ``lonely'' at a certain time if their distance around the track from every other runner is at least $1/n$.  The Lonely Runner Conjecture asserts that regardless of the starting speeds, every runner gets lonely eventually (perhaps at different times for different runners).

Identify the circular track with $\mathbb{R}/\mathbb{Z}$, and consider the frame of reference of a single runner.  From this runner's perspective, it doesn't matter in which direction the other runners are going, so we may as well take all of their speeds to be positive.  Also, Bohman, Holzman, and Kleitman \cite{bohman} have shown that it suffices to consider only integer speeds.  Given a real number $x$, let $\Vert x \Vert$ denote the distance from $x$ to the nearest integer.  For a set of positive integer speeds $v_1, \ldots, v_n$, we define the associated \emph{maximum loneliness} to be
$$\ML(v_1, \ldots, v_n)=\max_{t \in \mathbb{R}}\min_{1 \leq i \leq n} \Vert tv_i \Vert.$$
(The maximum exists because $\min_{1 \leq i \leq n} \Vert tv_i \Vert$ is periodic in $t$.)  Then the Lonely Runner Conjecture can be expressed succinctly in terms of this quantity.
\begin{conjecture}[Lonely Runner Conjecture]\label{conj:old}
For any positive integers $v_1, \ldots, v_n$, we have $$\ML(v_1, \ldots, v_n) \geq \frac{1}{n+1}.$$
\end{conjecture}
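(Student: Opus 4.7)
The plan is a classical reduce-and-induct strategy. First, invoke the Bohman--Holzman--Kleitman reduction (already cited) to restrict attention to positive integer speeds, and rescale so that $\gcd(v_1,\ldots,v_n) = 1$ and $v_1 < v_2 < \cdots < v_n$. The cases $n=1$ and $n=2$ are immediate (take $t=1/(2v_1)$, respectively analyze the ratio $v_2/v_1 \pmod 1$), and $n = 3$ is a classical theorem of Betke and Wills. For general $n$ I would try to produce a time $t^*$ with $\|t^* v_i\| \geq 1/(n+1)$ for every $i$, either by a direct arithmetic construction or by ruling out the complementary bad event via a measure or Fourier argument.

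The cleanest attempt is measure-theoretic. Let $B_i = \{t \in \mathbb{R}/\mathbb{Z} : \|t v_i\| < 1/(n+1)\}$, so that $|B_i| = 2/(n+1)$. The union bound gives
$$\Big|\bigcup_{i=1}^n B_i\Big| \leq \frac{2n}{n+1} < 2,$$
which is not by itself enough to guarantee a point of $\mathbb{R}/\mathbb{Z}$ outside every $B_i$. To close the gap I would try to exploit the fact that each $B_i$ contains the arc of length $2/(v_i(n+1))$ about $0$, so the sets overlap substantially near the origin; a careful inclusion--exclusion, or a second-moment argument after averaging against a suitable translation of the speed tuple, might push the effective measure below $1$.

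An alternative, Fourier-analytic, plan is to write $\min_i \|t v_i\|$ in terms of exponential sums and test against a nonnegative trigonometric polynomial $P(t)$ chosen so that $P \geq 1$ on $\bigcup_i B_i$ while $\int P$ is close to $|\bigcup_i B_i|$. A comparison with a Fejér-type kernel would then yield a lower bound for $\ML(v_1,\ldots,v_n)$. Both the measure-theoretic and Fourier approaches recover bounds of the right order of magnitude, but historically neither has produced the sharp constant $1/(n+1)$.

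The main obstacle, and the reason the conjecture is famously open, is precisely this gap between the right order and the right constant. Any successful argument must be tight enough to treat the extremal tuples like $(1,2,\ldots,n)$ exactly, yet robust enough to apply uniformly to all other tuples. Inductive strategies are further stymied by the fact that removing a single speed from an extremal tuple can yield another extremal tuple at the smaller value of $n$, so the induction never acquires slack. The hard step, therefore, is to identify a potential function or arithmetic invariant calibrated to the conjectured equality cases; lacking such an invariant, my plan would at best reproduce the known partial results for small $n$, which is consistent with the fact that the author of the present paper treats Conjecture~\ref{conj:old} as a hypothesis rather than a theorem.
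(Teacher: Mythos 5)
There is no proof to compare against: the statement you were given is Conjecture~\ref{conj:old}, the Lonely Runner Conjecture itself, which is open for $n \geq 7$ and is nowhere proved in this paper. The paper treats it purely as a hypothesis, cites the known cases $n \leq 6$ to other authors, and proposes the stronger Spectrum Conjecture (Conjecture~\ref{conj:spectrum}) precisely in the hope that a sharper induction statement might eventually break the impasse you describe. So your proposal cannot be judged ``correct'': it is a survey of strategies, and you yourself concede at the end that it would at best reproduce the known partial results for small $n$. That concession is accurate, and it is the honest part of the write-up; but as a proof of the stated conjecture the attempt has a gap that is not a missing detail but the entire content of the problem.

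Concretely, the measure-theoretic plan cannot work as described. The sets $B_i = \{t : \Vert t v_i \Vert < 1/(n+1)\}$ have total measure $2n/(n+1) > 1$ for $n \geq 2$, so no union bound, inclusion--exclusion, or second-moment refinement can show the complement is nonempty in general: for the extremal speeds $1, 2, \ldots, n$ the set of times with $\min_i \Vert t v_i \Vert \geq 1/(n+1)$ is finite, hence of measure zero, so any argument that produces a positive-measure set of good times must already fail at the threshold $1/(n+1)$. The same obstruction limits the Fourier/Fej\'{e}r route; the best bound obtained along those lines is Tao's $1/(2n) + c\log n/(n^2 \log\log n)$, far from $1/(n+1)$. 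Your remark about induction losing slack at tight tuples is exactly the phenomenon the paper is organized around (see Corollary~\ref{cor:implications} and the surrounding discussion, where the Spectrum Conjecture for $n$ runners is shown to imply the Lonely Runner Conjecture for $n-1$ runners, supplying the slack that a naive induction lacks), but identifying the obstacle is not the same as overcoming it.
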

If the Lonely Runner Conjecture is true, then the quantity $1/(n+1)$ is the best possible, for there are known equality cases (also called \emph{tight} sets of speeds) with $\ML(v_1, \ldots, v_n)=1/(n+1)$.  One such construction simply sets each $v_i=i$.

The Lonely Runner Conjecture is connected to questions in many fields.  We mention only a few: geometric view-obstruction (e.g., \cite{chen, cusick}); Diophantine approximation (e.g., \cite{betke, wills2, wills3, wills4}); flows in matroids (e.g., \cite{bienia, tarsi}); and chromatic numbers of distance graphs (e.g., \cite{liu, zhu}).  The conjecture has received substantial attention in recent decades.  Much of the research to date has one of the following flavors.
\begin{itemize}
\item Perhaps the most natural starting point is proving the Lonely Runner Conjecture for small values of $n$.  The $n=1$ case is trivial.  The $n=2$ and $n=3$ cases were proven by Betke and Wills \cite{betke}.  The $n=4$ case was proven first by Cusick and Pomerance \cite{pomerance} and later by Bienia, Goddyn, Gvozdjak, Seb\H{o}, and Tarsi \cite{bienia}.  A proof of the $n=5$ case is due to Bohman, Holzman, and Kleitman \cite{bohman}, and Renault \cite{renault} later found a simpler proof.  Most recently, Barajas and Serra \cite{barajas} proved the $n=6$ case.  The conjecture remains unresolved for $n \geq 7$.
\item Another appealing avenue is improving the trivial lower bound of $$\ML(v_1, \ldots, v_n) \geq \frac{1}{2n},$$ which holds because each runner is within $1/(2n)$ of the origin for only a $1/n$-fraction of all time.  Improvements are due to Chen \cite{chen}, Chen and Cusick \cite{chen2}, Perarnau and Serra \cite{perarnau}, and, most recently, Tao \cite{tao}, who showed that $1/(2n)$ can be replaced with
$$\frac{1}{2n}+\frac{c \log n}{n^2 \log \log n}$$
for some constant $c$ and all sufficiently large $n$.
\item A third approach is deriving conditions on the speeds $v_1, \ldots, v_n$ which guarantee $\ML(v_1, \ldots, v_n) \geq 1/(n+1)$.  Many such results impose ``lacunarity'' conditions on the speeds (i.e., require the speeds to grow at least as fast as a geometric progression).  See, e.g., Pandey \cite{pandey}, Ruzsa, Tuza, and Voigt \cite{ruzsa} Dubickas \cite{dubickas}, and Czerwi\'{n}ski \cite{czerwinski2}.  In a slightly different direction, Tao \cite{tao} proved several results on the case where all speeds are small.
\end{itemize}
Other investigations into the Lonely Runner Problem include: the work of Goddyn and Wong \cite{goddyn} on tight sets of speeds other than the trivial set $1, \ldots, n$; a result of Czerwi\'{n}ski \cite{czerwinski} on randomly chosen speeds; Tao's reduction \cite{tao} of the Lonely Runner Conjecture for $n$ runners to the case where all speeds are of size $\exp(O(n^2))$; Czerwi\'{n}ski and Grytczuk's proof \cite{grytczuk} of the Lonely Runner Conjecture if we are allowed to make one runner ``invisible'' at each time; and Chow and Rimani\'{c}'s resolution \cite{chow} of an analogous problem for function fields.
\\

In this paper, we introduce a new way to approach the Lonely Runner Problem.  A natural but (to our knowledge) hitherto unasked question is:
\begin{quote}
\textbf{Motivating Question.} If $v_1, \ldots, v_n$ satisfy the Lonely Runner Conjecture but do not form a set of tight speeds, must $\ML(v_1, \ldots, v_n)$ be uniformly bounded away from $1/(n+1)$?
\end{quote}
We conjecture that, contrary to what one might expect, this question has an affirmative answer.  (Speaking poetically, one might say that lonely runners are always either ``barely lonely'' or ``very lonely''.)  We in fact offer the following more precise statement about an unexpected rigidity of possible ``small'' values of $\ML(v_1, \ldots, v_n)$.
\begin{conjecture}[Loneliness Spectrum Conjecture]\label{conj:spectrum}
For any positive integers \linebreak $v_1, \ldots, v_n$, we have either $$\ML(v_1, \ldots, v_n)=\frac{s}{ns+1} \quad \text{for some } s \in \mathbb{N} \quad \text{or} \quad \ML(v_1, \ldots, v_n) \geq \frac{1}{n}.$$
\end{conjecture}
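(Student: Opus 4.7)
\smallskip

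\noindent \emph{Proof proposal.} Let $L := \ML(v_1,\ldots,v_n)$ and suppose $L < 1/n$. Choose $t^\ast$ attaining the max and write $\xi_i := t^\ast v_i \bmod 1$. Partition the critical set $\{i : \|\xi_i\| = L\}$ into $I^- = \{i : \xi_i = L\}$ and $I^+ = \{i : \xi_i = 1-L\}$. A one-sided derivative calculation shows that a small perturbation of $t^\ast$ would strictly increase $\min_i \|tv_i\|$ unless both $I^-$ and $I^+$ are nonempty; hence both are, and picking $i \in I^-$, $j \in I^+$ gives $t^\ast(v_i+v_j) \equiv 0 \pmod 1$. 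This forces $t^\ast = p/q \in \mathbb{Q}$ in lowest terms, so every $\xi_i \in \tfrac{1}{q}\mathbb{Z}$ and $L = a/q$ with $1 \le a < q/n$.

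Next I would show that $a/q$ must take the special form $s/(ns+1)$. Setting $c_i := v_i \bmod q$, the admissibility condition $\|\xi_i\| \ge L$ reads $p c_i \bmod q \in \{a, a+1, \ldots, q-a\}$, and the critical runners use exactly the extremal residues $\{a, q-a\}$, i.e.\ $c_i \equiv \pm a p^{-1} \pmod q$. I would then classify, for each $(q,a)$ with $a/q < 1/n$ and each compatible partition $(|I^-|, |I^+|)$, the multisets of residues realizable by $n$ distinct positive integers, and compute the resulting $\ML$. The algebraic hinge is the identity $\tfrac{1}{n} - \tfrac{s}{ns+1} = \tfrac{1}{n(ns+1)}$: when $a/q \in (0, 1/n)$ is not of the form $s/(ns+1)$, I would look for an auxiliary rational $t'$ (likely a Farey neighbour of $p/q$) that pushes the minimum past $1/n$, contradicting optimality of $t^\ast$. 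Conversely, the extremal configurations with $v_i \equiv \pm 1 \pmod{ns+1}$ genuinely realize $L = s/(ns+1)$, so the list is complete.

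The main obstacle is this classification-and-improvement step. First-order optimality only constrains critical runners, but non-critical ``spectators'' at interior residues in the band are not ruled out: e.g.\ $\ML(1,5,6) = 2/7$ is attained at $t^\ast = 2/7$ with only two critical runners (and $v_2 = 5$ a spectator at $\xi_2 = 3/7$). So one cannot hope for a clean ``every runner is critical'' structural lemma, and the improvement argument must cope with spectators, presumably via an LP-duality or rearrangement scheme that uses the global interaction of all $n$ speeds, and likely via an induction on $n$ that feeds the spectrum conjecture for fewer runners back into the argument. Since the $s=1$ case of the conjecture is exactly the Lonely Runner Conjecture, an unconditional proof for all $n$ is at least as hard as the LRC, which is open for $n \ge 7$; this is consistent with the paper's ability to resolve the conjecture unconditionally only for $n \le 3$ and only in a restricted regime for $n \in \{4, 6\}$.
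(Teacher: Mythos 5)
The statement you are addressing is a conjecture: the paper does not prove it, and indeed it is strictly stronger than the Lonely Runner Conjecture (the $s=1$ value $1/(n+1)$ is the claimed minimum of the spectrum), so no unconditional proof for all $n$ could be expected here; the paper only establishes the cases $n\le 3$ (Theorem~\ref{thm:2-and-3}, via Theorem~\ref{thm:2-runners} and the delicate Lemma~\ref{lem:3-runners} with pre-jumps) and the one-very-fast-runner regime for $n=4,6$ (Theorem~\ref{thm:4-5-6}, via Lemmas~\ref{lem:divisible} and~\ref{lem:divisible-2} and the reduction in Theorem~\ref{thm:reduction-to-computation}). Your first paragraph is correct and is essentially Proposition~\ref{prop:times-to-check}: at a maximum with $L<1/2$ both a runner at position $L$ and one at $1-L$ must be present, forcing $t^\ast=m/(v_i+v_j)$, hence rationality of $t^\ast$ and $L=a/q$ with bounded denominator. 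That is exactly the paper's basic tool, not a new route.

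The genuine gap is your second step, which you describe rather than execute. Knowing that $L=a/q$ with $q\mid$ (some $v_i+v_j$) does not by itself exclude values $a/q\in\bigl(\tfrac{1}{n+1},\tfrac1n\bigr)$ other than $s/(ns+1)$, nor values below $1/(n+1)$; ruling these out is the entire content of the conjecture. The ``classification-and-improvement'' scheme you sketch (Farey neighbours of $p/q$, an LP/rearrangement argument handling spectator runners, induction feeding the spectrum statement for fewer runners back in) is precisely where the difficulty lives: in the paper's $n=3$ proof this step already requires the full case analysis of Lemma~\ref{lem:3-runners} plus ad hoc times such as $t=1/3-1/(9c)$, and no general mechanism is known -- indeed any such mechanism would prove the Lonely Runner Conjecture for all $n$, which is open for $n\ge 7$, as you yourself note. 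So your proposal is a reasonable program outline whose opening observation matches the paper's Proposition~\ref{prop:times-to-check}, but it does not constitute a proof, and the missing step is not a technical detail but the core open problem.
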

Note that this conjecture is strictly stronger than the Lonely Runner Conjecture.  In this paper, we will refer to Conjecture~\ref{conj:old} as the ``Lonely Runner Conjecture'', and we will refer to Conjecture~\ref{conj:spectrum} as the ``(Loneliness) Spectrum Conjecture".  We will often refer to the maximum loneliness amounts of the form $s/(ns+1)$ as the \emph{discrete part} of the (maximum) loneliness spectrum.

The Loneliness Spectrum Conjecture is well-motivated for a variety of reasons; most likely, it is new only because nobody posed the above question about near-equality cases in the past.
\begin{itemize}
\item It is natural to focus on maximum loneliness amounts in the interval \linebreak $[1/(n+1), 1/n)$ because if the Lonely Runner Conjecture is true, then this interval is precisely the ``new'' regime that is made available with the addition of the $n$-th runner.
\item It is fairly easy to compute all of the possible maximum loneliness amounts for $2$ moving runners, and these quantities form a discrete spectrum--exactly as predicted by the Spectrum Conjecture.  In the same vein, numerical experiments with small speeds support the Spectrum Conjecture.
\item The Spectrum Conjecture provides a ``good'' explanation for the appearance of the quantity $1/(n+1)$ in the Lonely Runner Conjecture: the maximum loneliness $1/(n+1)$ is the last element of a highly-structured discrete spectrum.  (On the old view, by contrast, it seems that the best motivation for conjecturing $1/(n+1)$ instead of a smaller quantity is simply that no set of speeds with smaller maximum loneliness is known.)  In other words, this new rigidity property points towards a new structural understanding of the Lonely Runner Problem.
\end{itemize}

\section{Main results}

We now give an overview of our main results.  In Section~\ref{sec:real}, we briefly discuss the situation in which one allows arbitrary positive real speeds.  In Section~\ref{sec:tools}, we collect some of the basic tools that we will use throughout the paper.  In Section~\ref{sec:spectrum}, we show that the Spectrum Conjecture is the best possible in the sense that for every $n$, the entire discrete part of the loneliness spectrum is attained.  In Sections~\ref{sec:2-runners} and~\ref{sec:3-runners}, we prove the Loneliness Spectrum Conjecture for $n=2$ and $n=3$, respectively.  (The $n=1$ case is trivial.)

\begin{theorem}\label{thm:2-and-3}
For any positive integers $v_1, v_2$, we have either $$\ML(v_1, v_2)=\frac{s}{2s+1} \quad \text{for some } s \in \mathbb{N} \quad \text{or} \quad\ML(v_1, v_2)= \frac{1}{2}.$$
For any positive integers $v_1, v_2, v_3$, we have either $$\ML(v_1, v_2, v_3)=\frac{s}{3s+1} \quad \text{for some } s \in \mathbb{N} \quad \text{or} \quad\ML(v_1, v_2, v_3)\geq \frac{1}{3}.$$
\end{theorem}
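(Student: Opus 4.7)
The plan is to treat $n = 2$ and $n = 3$ separately, as the two-runner case reduces to a clean local-maximum analysis while the three-runner case requires a more delicate extremal argument.

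For $n = 2$, after rescaling time so that I may assume $\gcd(v_1, v_2) = 1$, I will analyze the piecewise-linear, period-$1$ function $f(t) = \min(\Vert t v_1 \Vert, \Vert t v_2 \Vert)$, whose summands have slopes $\pm v_i$. Its local maxima arise from only two sources: (i) simultaneous peaks where $tv_1$ and $tv_2$ are both half-integers, which is possible iff both $v_1, v_2$ are odd, in which case $\ML = 1/2$; and (ii) transversal ``$+/-$'' crossings where $\Vert tv_1 \Vert = \Vert tv_2 \Vert$ with slope signs opposite. The latter force $t(v_1+v_2) \in \mathbb{Z}$, so $t = m/(v_1+v_2)$ and $f(t) = \Vert m v_1/(v_1+v_2) \Vert$. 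Since $\gcd(v_1, v_1+v_2) = 1$, the maximum over $m$ equals $\lfloor (v_1+v_2)/2 \rfloor / (v_1+v_2)$; when exactly one of $v_1, v_2$ is even this is exactly $s/(2s+1)$ with $s = (v_1+v_2-1)/2 \in \mathbb{N}$, and when both are odd this equals $1/2$ (matching the simultaneous-peak value).

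For $n = 3$, again normalize $\gcd(v_1, v_2, v_3) = 1$ and suppose $\delta := \ML(v_1, v_2, v_3) < 1/3$; the goal is to force $\delta = s/(3s+1)$. At an optimal $t_0$, let $S \subseteq \{1, 2, 3\}$ denote the set of tight indices (those with $\Vert t_0 v_i \Vert = \delta$), and to each $i \in S$ attach a sign $\sigma_i \in \{+, -\}$ according to which side of the nearest integer $t_0 v_i$ lies. A short perturbation argument (moving $t$ infinitesimally to the left or right) shows $S$ must contain both signs, and hence there is an opposite-sign pair $\{i, j\} \subseteq S$. Applying the $n = 2$ crossing analysis to this pair yields $\delta = K/(v_i + v_j)$ for some positive integer $K$, and the hypothesis $\delta < 1/3$ forces $v_i + v_j \geq 3K + 1$. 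Equality gives $\delta = K/(3K+1) = s/(3s+1)$ with $s = K$, which is the desired conclusion.

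The main obstacle is therefore to rule out $v_i + v_j \geq 3K + 2$, which I plan to do by contradiction: under that hypothesis, construct a time $t' \neq t_0$ with $\min_\ell \Vert t' v_\ell \Vert > \delta$. Two natural sources of candidates for $t'$ are (a) a neighboring lattice point $m'/(v_i+v_j)$, exploiting that $(K+r)/(v_i+v_j)$ exceeds $K/(v_i+v_j)$ for small $r \geq 1$; and (b) an ``off-pair'' point of the form $m''/(v_i+v_k)$ or $m''/(v_j+v_k)$, which replaces the pair-sum $v_i+v_j$ by a different one. Whether a given candidate satisfies the third-runner constraint is governed by the residue of $m v_k \pmod{v_i+v_j}$, which lies in the window $[K, v_i+v_j - K]$ of length at least $2K+2$, providing genuine slack to reroute. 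The argument splits into a finite case analysis keyed on $\gcd(v_i, v_j)$ and on whether $|S| = 2$ or $|S| = 3$; in the $|S| = 3$ subcase the triple coincidence $\Vert t_0 v_\ell \Vert = \delta$ for all $\ell$ supplies additional Diophantine identities (such as $3\delta = t_0(v_1+v_2-v_3) - (n_1+n_2-n_3)$ with $n_\ell \in \mathbb{Z}$) that directly pin down the denominator of $\delta$ in lowest terms to the form $3s+1$. Carrying out this case analysis is the bulk of the technical work of the theorem.
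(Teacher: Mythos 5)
Your $n=2$ argument is fine and is essentially the paper's proof (Theorem~\ref{thm:2-runners}): both speeds odd gives $1/2$, otherwise all local maxima occur at $t=m/(v_1+v_2)$ and coprimality of $v_1$ with $v_1+v_2$ realizes $s/(2s+1)$ with $2s+1=v_1+v_2$. The gap is in the $n=3$ half. Your reduction to $\delta=K/(v_i+v_j)$ via an opposite-sign tight pair is correct (it is Proposition~\ref{prop:times-to-check}), but everything after that is a plan rather than a proof: the claim that whenever $v_i+v_j\ge 3K+2$ one can ``reroute'' to a strictly lonelier time at a neighboring point $m'/(v_i+v_j)$ or at an off-pair denominator is exactly the hard content of the theorem, and you defer it wholesale (``carrying out this case analysis is the bulk of the technical work''). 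In the paper this content is Lemma~\ref{lem:3-runners}, a delicate argument tracking the residues of $\ell\cdot uv_3$ modulo $v_1+v_2$ over the window $r\le\ell\le v_1+v_2-r$, split into several subcases on the residue of $uv_3$, and it only applies when $\gcd(v_i,v_j)\le 2$; the case of a shared factor $g\ge 3$ needs a separate pre-jump argument (perturbing by $h/g$ after invoking the two-runner result), the case where no speed is divisible by $3$ needs the time $t=1/3$, and the residual mod-$3$ case analysis needs ad hoc witnesses such as $t=\tfrac13-\tfrac1{9c}$, which is not of the form $m/(v_i+v_j)$ for the tight pair. None of these mechanisms appear in your outline, and your candidate times alone do not obviously cover them. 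The $\lvert S\rvert=3$ remark (that $3\delta=t_0(v_1+v_2-v_3)-(n_1+n_2-n_3)$ ``directly pins down'' the denominator as $3s+1$) is asserted, not argued.

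There is also a structural flaw in the contradiction you set up. Ruling out $v_i+v_j\ge 3K+2$ outright is more than is true: $\delta=K/(v_i+v_j)$ equals $s/(3s+1)$ precisely when $(v_i+v_j)-3K$ divides $K$, which can happen with $v_i+v_j\ge 3K+2$ (e.g.\ $K=2$, $v_i+v_j=8$ gives $\delta=1/4=1/(3\cdot1+1)$, and a pair sum equal to $2(3s+1)$ always yields $V=3K+2$). In such configurations $\delta$ is already of the allowed form, no strictly better time need exist, and your planned contradiction cannot be derived; the argument must instead separate ``$\delta$ of the allowed shape'' from ``$\delta$ improvable,'' which is how the paper organizes things (the dichotomy on whether the third speed is a multiple of the pair sum, followed by the residue-mod-$3$ comparison of the three quantities $L_{i,j}$). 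As it stands, the $n=3$ half is an outline of the problem, not a proof.
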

In Sections~\ref{sec:1-fast}-\ref{sec:reformulating}, we develop machinery for approaching the Spectrum Conjecture in the regime where one speed is much faster than the rest.  We eventually produce an explicit computation involving only the tight sets of $n-1$ speeds which allows us to decide whether or not the Spectrum Conjecture holds for $n$ runners in this regime.  Using this technique and previous results about tight speed sets for $3$ and $5$ runners, we establish the Spectrum Conjecture for $n=4$ and $n=6$ when one speed is much faster than the others.
\begin{theorem}\label{thm:4-5-6}
Let $n=4$ or $n=6$.  If $v_1< \cdots < v_n$ are positive integers satisfying $v_n>4v_{n-1}^4$, then we have either $$\ML(v_1, \ldots, v_n)=\frac{s}{ns+1} \quad \text{for some } s \in \mathbb{N} \quad \text{or} \quad \ML(v_1, \ldots, v_n) \geq \frac{1}{n}.$$
\end{theorem}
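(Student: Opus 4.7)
The plan is to reduce, via a local analysis near the optima of the $(n-1)$-runner subproblem, to a finite explicit computation that only involves the tight $(n-1)$-speed sets; the hypothesis $v_n>4v_{n-1}^4$ is what makes this reduction clean.

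\emph{Step 1 --- reduction to tight $(n-1)$-sets.} In the regime $v_n\gg v_{n-1}$ the tent function $t\mapsto\|tv_n\|$ has period $1/v_n\ll 1/v_{n-1}$ and oscillates many times across any interval on which the first $n-1$ constraints vary appreciably. I would first show that whenever the region $\{t:\min_{i<n}\|tv_i\|\geq 1/n\}$ contains an interval of length exceeding $1/v_n$, that interval necessarily contains a point with $\|tv_n\|\geq 1/n$ as well, so $\ML(v_1,\dots,v_n)\geq 1/n$. Combined with Theorem~\ref{thm:2-and-3} for $n=4$, and with the classical Barajas--Serra theorem together with the structural description of near-tight $5$-sets for $n=6$, the only remaining scenario in which $\ML(v_1,\dots,v_n)<1/n$ is when $(v_1,\dots,v_{n-1})$ is itself a tight $(n-1)$-set, i.e.\ $\ML(v_1,\dots,v_{n-1})=1/n$.

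\emph{Step 2 --- local linear program.} Near each optimum $t^\ast$ of a tight $(n-1)$-set, the region $\{t:\min_{i<n}\|tv_i\|\geq\alpha\}$ is a short interval (for $\alpha<1/n$) whose endpoints move piecewise linearly in $\alpha$ with slopes $\pm v_i$ coming from the speeds active at $t^\ast$. Imposing the additional constraint $\|tv_n\|\geq\alpha$ on this interval, the quantity $\alpha^\ast=\ML(v_1,\dots,v_n)$ becomes the optimum of an explicit one-variable linear program whose data depend only on the combinatorial type of the tight $(n-1)$-set, the integer $v_n$, and the residue $t^\ast v_n\bmod 1$ (equivalently, $v_n\bmod\lcm(v_1,\dots,v_{n-1})$). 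This is the ``explicit computation involving only the tight sets of $n-1$ speeds'' alluded to in Section~\ref{sec:1-fast}--Section~\ref{sec:reformulating}.

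\emph{Step 3 --- explicit verification.} Finally I would enumerate the tight $(n-1)$-sets using the classifications of tight $3$-sets (for the $n=4$ case) and tight $5$-sets (for the $n=6$ case), and check case by case that the optimum $\alpha^\ast$ produced by the linear program in Step 2 always lies in $\{s/(ns+1):s\in\mathbb{N}\}$. The \textbf{main obstacle} is Step 3: a priori the LP could produce exotic rationals $p/q$ outside the discrete spectrum, and the content of the theorem is precisely that the arithmetic of each tight $(n-1)$-set --- together with the clean scaling $v_n>4v_{n-1}^4$ --- forces the extremal value into the discrete spectrum. Making this rigidity explicit and uniform across all residues of $v_n$ modulo $\lcm(v_1,\dots,v_{n-1})$ is where the bulk of the technical work will lie.
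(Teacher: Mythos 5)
Your Steps 1 and 2 are essentially the paper's own route: non-tight $(n-1)$-sets are disposed of by a perturbation argument (Proposition~\ref{prop:perturb}, Lemma~\ref{lem:1-fast}), and the local analysis near the equality times of a tight $(n-1)$-set, with the answer depending only on the residue of $v_n$ modulo the lcm $D$ of the denominators of those times, is exactly Lemmas~\ref{lem:divisible} and~\ref{lem:divisible-2} and Theorem~\ref{thm:condition}. Two small corrections there: to know that a non-tight $(n-1)$-set really yields an interval of length exceeding $1/v_n$ you need the quantitative gap $\ML(v_1,\ldots,v_{n-1})-1/n\geq \frac{1}{n(v_i+v_j)}\geq\frac{1}{n(2v_{n-1}-1)}$, which comes from the rationality/denominator bound of Proposition~\ref{prop:times-to-check}, not just from piecewise linearity; and the external input for $n=6$ is the five-runner theorem and tight-set classification of Bohman--Holzman--Kleitman (Theorem~\ref{thm:tight-5}), not Barajas--Serra (which is the six-runner case and is not needed here).

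The genuine gap is Step 3, which you explicitly defer as ``the bulk of the technical work'': the theorem \emph{is} that verification, and you have not done it. The missing idea that collapses it to a short check rather than an open-ended rigidity problem is admissibility of residues. Both relevant tight sets, $1,2,\ldots,n-1$ and $1,3,4,5,9$ (with $D=n$, respectively $D=6$), have $t=1/n$ as an equality time, so for any residue $Q\not\equiv 0\pmod D$ one already has $\Abs{tv_n}=\Abs{Q/D}\geq 1/n$ at that time, hence $\ML(v_1,\ldots,v_n)\geq 1/n$ outright and there is nothing to compute. For the single surviving residue $v_n\equiv 0 \pmod D$, write $v_n=ns$: at every equality time the new runner sits at $0$, the slowest runner adjacent to the boundary is the speed-$1$ runner, and the local computation gives $\ML(v_1,\ldots,v_n)=\frac{v_n\cdot(1/n)}{v_n+1}=\frac{s}{ns+1}$ exactly --- no ``exotic rationals'' can occur, so the worry driving your Step 3 dissolves once this observation is made. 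Two further points you omit: the classifications of tight sets (Corollary~\ref{cor:3-runners}, Theorem~\ref{thm:tight-5}) are only up to scaling, so you must also handle $(g,2g,3g)$ etc., which the paper does via a pre-jump showing every admissible residue must be a multiple of $g$ (Theorem~\ref{thm:reduction-to-computation}); and the stated explicit threshold $v_n>4v_{n-1}^4$ requires tracking constants through the local analysis (the remark after Lemma~\ref{lem:divisible} gives $v_n>4v_{n-1}^3v_1$), rather than a bare ``sufficiently large'' claim.
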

Along the way, we raise several questions about the set of all possible maximum loneliness amounts for $n$ runners, which we denote
$$\mathcal{S}(n)=\{\ML(v_1, \ldots, v_n): v_1, \ldots, v_n \text{ positive integers}\}.$$
In particular, we investigate where in $[0, 1/2]$ this set is dense.  One interesting consequence of this line of inquiry is that the Spectrum Conjecture for $n$ runners immediately implies the Lonely Runner Conjecture for $n-1$ runners as well as for $n$ runners.

In Sections~\ref{sec:short-progression} and~\ref{sec:interlude}, we recast previous results on the Lonely Runner Conjecture in terms of the Spectrum Conjecture.  We consider small speeds, randomly chosen speeds, and speeds that form lacunary sequences.  In Section~\ref{sec:conclusion}, we recapitulate some of the promising areas of future inquiry that arise in the intervening sections.

Often, the main barrier to proving a long-standing conjecture is that people try to establish the ``wrong'' version of the statement.  This tendency is especially true of induction-type arguments, where having a stronger induction hypothesis can make a proof easier.  We hope that the Loneliness Spectrum Conjecture may be the ``right'' way to approach the Lonely Runner Conjecture.

\section{Real speeds and integer speeds}\label{sec:real}

The original formulation of the Lonely Runner Problem allowed arbitrary positive real speeds, but it was quickly realized that the case of integer speeds tells the whole story.  We can extend our definition of maximum loneliness to general real speeds as follows: given a set of positive real speeds $v_1, \ldots, v_n$, define the associated maximum loneliness to be
$$\ML(v_1, \ldots, v_n)=\sup_{t \in \mathbb{R}}\min_{1 \leq i \leq n} \Vert tv_i \Vert.$$
We remark that the runners-going-around-a-track formulation of the Lonely Runner Conjecture is slightly stronger than the assertion that
$$\ML(v_1, \ldots, v_n) \geq \frac{1}{n+1}$$
for all choices of positive real $v_1, \ldots, v_n$, since the supremum may not be attained.  This consideration is not troubling, however, due to the following result of Bohman, Holzman, and Kleitman \cite{bohman} from 2001; it is unclear whether or not such a result had been established prior.

\begin{lemma}[Bohman, Holzman, and Kleitman \cite{bohman}]\label{lem:real-speeds}
Fix some integer $n \geq 2$, and let $\delta$ be a positive real number such that $\ML(v_1, \ldots, v_{n-1})> \delta$ for all positive integers $v_1, \ldots, v_{n-1}$.  If $v'_1, \ldots, v'_n$ are positive real numbers and some quotient $v'_i/v'_j$ is irrational, then $\ML(v'_1, \ldots, v'_n)>\delta$.
\end{lemma}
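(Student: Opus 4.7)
The plan is to combine the integer-speed hypothesis with Weyl equidistribution coming from the irrational ratio. Scaling is immaterial to $\ML$, so I would rescale to assume $v'_1 = 1$; the assumption that some $v'_i/v'_j$ is irrational then forces some $v'_i$ with $i \geq 2$ to be irrational, and after relabeling I take $v'_n$ itself to be irrational.

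First I would dispatch the \emph{main case}, where $v'_2, \ldots, v'_{n-1}$ are all rational. Write $v'_i = a_i/b$ for positive integers $a_1, \ldots, a_{n-1}$ with common denominator $b$. The hypothesis applied to the integer speed set $(a_1, \ldots, a_{n-1})$ yields some $s_0 \in \mathbb{R}$ with $\min_{1 \leq i \leq n-1} \Vert s_0 a_i \Vert > \delta$. Setting $t_0 = b s_0$ gives $\Vert t_0 v'_i \Vert = \Vert s_0 a_i \Vert > \delta$ for $i \leq n-1$, and this lower bound persists along the whole arithmetic progression $t_0 + b\mathbb{Z}$, since $(t_0 + kb)v'_i$ differs from $t_0 v'_i$ by the integer $k a_i$. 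For the last coordinate, $(t_0+kb)v'_n \equiv t_0 v'_n + k(bv'_n) \pmod 1$, and because $v'_n$ is irrational so is $bv'_n$; Weyl equidistribution gives density of $\{k(bv'_n) \bmod 1 : k \in \mathbb{Z}\}$ in $\mathbb{R}/\mathbb{Z}$, so I may choose $k$ so that $\Vert(t_0+kb)v'_n\Vert$ lies arbitrarily close to $1/2$. Since the hypothesis applied to any single integer speed set already forces $\delta < 1/2$, this choice also gives $\Vert (t_0+kb)v'_n \Vert > \delta$, whence $\ML(v'_1, \ldots, v'_n) > \delta$ in the main case.

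For the \emph{general case}, when some $v'_j$ with $2 \leq j \leq n-1$ is also irrational, I would reduce to the main case via the orbit closure $K \subseteq (\mathbb{R}/\mathbb{Z})^n$ of $\{(tv'_1, \ldots, tv'_n) \bmod 1 : t \in \mathbb{R}\}$. By Kronecker's theorem $K$ is a subtorus of dimension $d = \dim_\mathbb{Q} \Span(v'_1, \ldots, v'_n) \geq 2$, and $\ML(v'_1, \ldots, v'_n)$ equals the maximum of $\min_i \Vert x_i \Vert$ over the compact set $K$, so the task is to exhibit a point of $K$ with every coordinate of norm exceeding $\delta$. I would split on whether $v'_n \in \Span_\mathbb{Q}(v'_1, \ldots, v'_{n-1})$: if not, then the projection $K \to \pi(K) \subseteq (\mathbb{R}/\mathbb{Z})^{n-1}$ has full-circle fibers, so it suffices to find a good point in $\pi(K)$, which reduces either to the main case (when the remaining speeds happen to have all rational ratios) or to an induction on $n$, using that the hypothesis on $n-1$ integer speeds implies the corresponding hypothesis on $n-2$ integer speeds by the monotonicity of $\ML$ under adjoining a speed; if so, each fiber is a finite coset, and I must choose $x \in \pi(K)$ so that the forced value of $x_n$ also satisfies $\Vert x_n \Vert > \delta$, using the strict inequality in the hypothesis together with the extra parameter freedom from $d \geq 2$. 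I expect this second sub-case to be the main obstacle, as it requires carefully balancing the congruence structure on the relation lattice of $(v'_i)$ against the strict lower bound demanded on every coordinate.
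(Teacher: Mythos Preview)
The paper does not prove this lemma; it is quoted from \cite{bohman} with attribution only, so there is no in-paper argument to compare against.  The question is simply whether your proposal constitutes a valid proof.

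Your main case is correct and cleanly written: when $v'_1=1$, the speeds $v'_2,\ldots,v'_{n-1}$ are rational, and $v'_n$ is irrational, the integer hypothesis handles the first $n-1$ coordinates along the progression $t_0+b\mathbb{Z}$, and equidistribution of $k\,bv'_n$ handles the last.  Your Case~A is also sound: when $v'_n\notin\Span_{\mathbb Q}(v'_1,\ldots,v'_{n-1})$ the fibres of $\pi$ are full circles, and the hypothesis for $n-2$ integer speeds follows from the given one by the monotonicity $\ML(u_1,\ldots,u_{n-2})\geq\ML(u_1,\ldots,u_{n-1})$.

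The genuine gap is Case~B, where $v'_n\in\Span_{\mathbb Q}(v'_1,\ldots,v'_{n-1})$ and some ratio among $v'_1,\ldots,v'_{n-1}$ is irrational.  Here $\pi|_K\colon K\to\pi(K)$ is a finite covering of $d$-tori, so each good point $x'\in\pi(K)$ has only finitely many lifts (possibly just one), and you have no control over their last coordinate.  Your sketch invokes ``extra parameter freedom from $d\ge 2$'' and ``carefully balancing the congruence structure'', but no actual mechanism is supplied.  Concretely, take $n=3$ and $v'_1=1,\ v'_2=\sqrt2,\ v'_3=1+\sqrt2$: the induction on the first two speeds hands you the point $(1/2,1/2)\in\pi(K)$, whose unique lift is $(1/2,1/2,0)$, with last coordinate $0$.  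A good lift does exist (e.g.\ over $(1/3,1/3)$), but nothing in your outline explains how to locate it in general.  You also cannot relabel your way out of Case~B, since for $n>d$ it may happen that every $v'_i$ lies in the $\mathbb Q$-span of the others.  Until Case~B is given a concrete argument, the proof is incomplete.
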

In particular, such an ``irrational'' set of speeds cannot have the smallest maximum loneliness, among all sets of $n$ speeds.  It immediately follows from (say) the $\delta=2/(2n+1)$ case of this lemma that the Lonely Runner Conjecture for integer speeds (as stated in the Introduction) implies the the runners-going-around-a-track formulation for real speeds.

\begin{corollary}[Bohman, Holzman, and Kleitman \cite{bohman}]\label{cor:real-speeds}
Fix some integer $n \geq 2$, and assume that the Lonely Runner Conjecture (for integer speeds) holds for both $n$ and $n-1$ runners.  Then for any positive real speeds $v_1, \ldots, v_n$, there is a time $t$ such that $\Abs{tv_i} \geq 1/(n+1)$ for all $1 \leq i \leq n$.
\end{corollary}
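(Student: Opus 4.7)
The plan is to split into two cases according to whether all ratios $v_i/v_j$ are rational or at least one is irrational, and handle each case with a short argument.

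First I would treat the all-rational case. If every ratio $v_i/v_j$ is rational, then there exists a positive real $\alpha$ and positive integers $w_1, \ldots, w_n$ with $v_i = \alpha w_i$ for all $i$ (choose $\alpha$ to be $v_1$ divided by a common denominator of the $v_i/v_1$). Applying the Lonely Runner Conjecture (for integer speeds and $n$ runners) to $w_1, \ldots, w_n$ gives a real $t_0$ with $\min_i \Vert t_0 w_i \Vert \geq 1/(n+1)$; crucially, this maximum is attained (not just a supremum) because $\min_i \Vert t w_i \Vert$ is periodic in $t$ by integrality, as noted in the paper's definition of $\ML$. Setting $t = t_0/\alpha$ gives $tv_i = t_0 w_i$ and hence $\Vert t v_i \Vert \geq 1/(n+1)$ for all $i$.

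Next I would treat the case where some quotient $v_i/v_j$ is irrational, using Lemma~\ref{lem:real-speeds} as the authors suggest. Take $\delta = 2/(2n+1)$, which satisfies $1/(n+1) < \delta < 1/n$. The Lonely Runner Conjecture for $n-1$ integer runners (an assumed hypothesis) yields $\ML(u_1, \ldots, u_{n-1}) \geq 1/n > \delta$ for every choice of positive integer speeds, so the hypothesis of Lemma~\ref{lem:real-speeds} is met with this $\delta$. The conclusion of the lemma then gives $\ML(v_1, \ldots, v_n) > \delta > 1/(n+1)$, which by definition of the supremum means there is a real $t$ with $\min_i \Vert t v_i \Vert > 1/(n+1)$, and in particular $\Vert t v_i \Vert \geq 1/(n+1)$ for each $i$.

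Combining the two cases yields the corollary. There is really no obstacle here: the content lies entirely in Lemma~\ref{lem:real-speeds}, and the corollary is just a matter of choosing $\delta$ wedged strictly between $1/(n+1)$ and $1/n$ and observing that in the rational case the supremum is attained by periodicity after clearing denominators. The only minor subtlety worth being explicit about is precisely this attainment issue (the supremum in the real-speed definition of $\ML$ need not in general be a maximum), which is why splitting off the rational case is necessary rather than simply quoting the lemma in one shot.
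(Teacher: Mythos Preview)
Your proposal is correct and matches the paper's approach exactly: the paper indicates only that the corollary follows from the $\delta = 2/(2n+1)$ case of Lemma~\ref{lem:real-speeds}, and you have filled in precisely the intended details, including the case split between all-rational and some-irrational ratios. Your explicit attention to the attainment issue (supremum versus maximum) is a welcome clarification that the paper leaves implicit.
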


We can also use the above extended definition of maximum loneliness to formulate a version of the Loneliness Spectrum Conjecture for real speeds.  As in the case of the Lonely Runner Conjecture, it turns out that checking integer speeds is sufficient.  We need the following proposition, which in Section~\ref{sec:accumulation} we will deduce as an easy corollary of our results on accumulation points.

\begin{prop}\label{prop:real-speeds}
Fix some integer $n \geq 2$, and assume that the Spectrum Conjecture (for integer speeds) holds for $n$ runners.  Then the Lonely Runner Conjecture (for integer speeds) holds for $n-1$ runners.
\end{prop}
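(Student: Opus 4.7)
The plan is to show that every value $\mu = \ML(w_1, \ldots, w_{n-1})$ for positive integer speeds $w_1, \ldots, w_{n-1}$ is approximated arbitrarily well from strictly below by elements of $\mathcal{S}(n)$, and then to invoke the Spectrum Conjecture for $n$ runners to conclude $\mu \geq 1/n$, which is the $(n-1)$-runner Lonely Runner Conjecture.

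Fix positive integers $w_1, \ldots, w_{n-1}$ and set $\mu = \ML(w_1, \ldots, w_{n-1}) > 0$. I claim that, for every sufficiently small $\epsilon > 0$, there exists a positive integer $N$ satisfying
\[
\mu - \epsilon \;\leq\; \ML(w_1, \ldots, w_{n-1}, N) \;<\; \mu.
\]
For the strict upper bound, the set $T \subset [0,1)$ of times attaining $\min_i \Vert tw_i \Vert = \mu$ is finite and consists of rationals with a common denominator $q$: the piecewise-linear triangle-wave minimum can attain a local maximum only at a crossing $\Vert tw_i \Vert = \Vert tw_j \Vert$ or a peak $tw_i \in \tfrac{1}{2} + \mathbb{Z}$, each of which lies in $\frac{1}{|w_i \pm w_j|}\mathbb{Z}$ or $\frac{1}{2w_i}\mathbb{Z}$. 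For $N$ a multiple of $q$, every $t \in T$ satisfies $\Vert tN \Vert = 0 < \mu$, so no time can simultaneously satisfy $\min_i \Vert tw_i \Vert \geq \mu$ and $\Vert tN \Vert \geq \mu$, forcing $\ML(w_1, \ldots, w_{n-1}, N) < \mu$. For the lower bound, pick $t^* \in T$ and an interval $I$ around $t^*$ on which $\min_i \Vert tw_i \Vert \geq \mu - \epsilon$; when $N$ is large enough relative to $1/|I|$, the function $\Vert tN \Vert$ oscillates rapidly over $I$, so the subset $\{t \in I : \Vert tN \Vert \geq \mu - \epsilon\}$ has positive Lebesgue measure, and any such $t$ witnesses $\ML(w_1, \ldots, w_{n-1}, N) \geq \mu - \epsilon$. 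Both requirements are met by $N = kq$ with $k$ sufficiently large.

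Now assume the Spectrum Conjecture for $n$ runners and suppose for contradiction that $\mu < 1/n$. The set $A = \{s/(ns+1) : s \in \mathbb{N}\} \cup [1/n, 1/2]$ is closed in $\mathbb{R}$, and each of its points strictly less than $1/n$ is isolated in $A$; thus there exists $\epsilon > 0$ with $[\mu - \epsilon, \mu) \cap A = \emptyset$. By the Spectrum Conjecture, $\mathcal{S}(n) \subseteq A$, so $\mathcal{S}(n) \cap [\mu - \epsilon, \mu) = \emptyset$, contradicting the existence of $N$ with $\ML(w_1, \ldots, w_{n-1}, N) \in [\mu - \epsilon, \mu) \cap \mathcal{S}(n)$. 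Hence $\mu \geq 1/n$.

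The main technical obstacle is the strict-upper-bound step: one must pin down the denominator of the maximizing times in terms of $w_1, \ldots, w_{n-1}$ and then arrange the lower-bound oscillation argument so that the chosen $N$ may simultaneously be taken divisible by that denominator. The author instead packages such analysis into more general accumulation-point results in Section~\ref{sec:accumulation}, which presumably establish an inclusion of the form $\mathcal{S}(n-1) \subseteq \overline{\mathcal{S}(n) \cap [0, \mu)}$ as a standalone structural statement, from which Proposition~\ref{prop:real-speeds} drops out immediately.
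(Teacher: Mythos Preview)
Your argument is correct and follows essentially the same route as the paper. The paper's Lemma~\ref{lem:divisible} carries out exactly your ``$N=kq$ with $k$ large'' construction but goes further, computing the exact value $\ML(w_1,\ldots,w_{n-1},N)=\tfrac{N\mu}{N+w_{\mu}}$ rather than merely sandwiching it in $[\mu-\varepsilon,\mu)$; this exact formula then feeds into Theorem~\ref{thm:lower-accumulation} (every element of $\mathcal{S}(n-1)$ is a lower accumulation point of $\mathcal{S}(n)$), from which the proposition drops out as part of Corollary~\ref{cor:implications}, just as you anticipated in your final paragraph. Your more elementary sandwich argument is entirely sufficient for the proposition itself and avoids the extra bookkeeping needed for the exact formula, at the cost of not yielding the sharper structural statement the paper later reuses.
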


We can reduce the ``real'' case of the Spectrum Conjecture to the ``integer'' case.

\begin{theorem}\label{thm:real-speeds}
Fix some integer $n \geq 2$, and assume that the Spectrum Conjecture (for integer speeds) holds for $n$ runners.  Then for any positive real speeds $v_1, \ldots, v_n$, we have either $$\ML(v_1, \ldots, v_n)=\frac{s}{ns+1} \quad \text{for some } s \in \mathbb{N} \quad \text{or} \quad \ML(v_1, \ldots, v_n) \geq \frac{1}{n}.$$
\end{theorem}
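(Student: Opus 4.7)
The plan is to split into two cases based on whether all pairwise quotients $v'_i/v'_j$ are rational.

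Case 1 (all quotients rational): pick any index, say $1$, and write $v'_i/v'_1 = p_i/q_i$ with $p_i, q_i$ positive integers. Setting $Q = \lcm(q_1, \ldots, q_n)$ and $w_i = (v'_i/v'_1) Q$, each $w_i$ is a positive integer, and $v'_i = (v'_1/Q) w_i$. Since rescaling all speeds by a common positive factor $c$ can be absorbed into the supremum variable $t$ (replace $t$ by $t/c$), one checks directly from the definition that $\ML(v'_1, \ldots, v'_n) = \ML(w_1, \ldots, w_n)$, and the integer Spectrum Conjecture (hypothesis) gives the desired dichotomy.

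Case 2 (some quotient irrational): by Proposition~\ref{prop:real-speeds}, the assumed Spectrum Conjecture for $n$ runners implies the Lonely Runner Conjecture for $n-1$ runners, so $\ML(u_1, \ldots, u_{n-1}) \geq 1/n$ for all choices of positive integers $u_1, \ldots, u_{n-1}$. Thus for any real $\delta < 1/n$ we have $\ML(u_1, \ldots, u_{n-1}) > \delta$ for all integer $(n-1)$-tuples, and Lemma~\ref{lem:real-speeds} applies to yield $\ML(v'_1, \ldots, v'_n) > \delta$. Letting $\delta \to 1/n^-$ gives $\ML(v'_1, \ldots, v'_n) \geq 1/n$, which puts us on the second branch of the conclusion.

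These two cases cover every tuple of positive real speeds, so the theorem follows. There is no real obstacle here: both the rational-quotient reduction and the irrational-quotient reduction are off-the-shelf once we have Lemma~\ref{lem:real-speeds} and Proposition~\ref{prop:real-speeds}. The only mild subtlety is that Lemma~\ref{lem:real-speeds} requires a strict inequality $\ML > \delta$, which is why we approach $1/n$ from below rather than trying to plug in $\delta = 1/n$ directly; this limiting argument is harmless because the conclusion we seek is itself non-strict.
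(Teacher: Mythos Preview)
Your proof is correct and follows essentially the same approach as the paper: the rational-quotient case reduces to integer speeds by rescaling (the paper writes this as $\ML(v_1,\ldots,v_n)=\ML(\ell,\ell v_2/v_1,\ldots,\ell v_n/v_1)$, which is exactly your $w_i$ construction), and the irrational-quotient case invokes Proposition~\ref{prop:real-speeds} followed by Lemma~\ref{lem:real-speeds} with $\delta\to 1/n^{-}$. Your explicit remark about why the limiting argument is needed (the strict inequality in Lemma~\ref{lem:real-speeds}) is a nice clarification that the paper leaves implicit.
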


\begin{proof}
First, suppose every quotient $v_i/v_j$ is rational.  Then let $\ell$ be the lcm of the denominators appearing in $v_2/v_1,\ldots, v_n/v_1$, and note that
$$\ML(v_1, \ldots, v_n)=\ML\left(\ell, \frac{\ell v_2}{v_1}, \ldots, \frac{\ell v_n}{v_1} \right),$$
where the speeds on the right-hand side are all integers.  So, by assumption, the maximum loneliness is as in the statement of the theorem.

Second, suppose not every $v_i/v_j$ is rational.  By Proposition~\ref{prop:real-speeds}, we know that the Lonely Runner Conjecture holds for $n-1$ runners.  Now, applying Lemma~\ref{lem:real-speeds} with a sequence of $\delta$'s that approaches $1/n$ from below shows that $\ML(v_1, \ldots, v_n) \geq 1/n$, as desired.
\end{proof}

In light of this result, we concern ourselves with the case of integer speeds in the remainder of this paper.

\section{Tools and Preliminary Observations}\label{sec:tools}

We begin with a few simple observations that we will use freely later.  We always take $v_1, \ldots, v_n$ to be positive integers, with $n \geq 2$.

First, note that without loss of generality we can restrict our attention to the case where the speeds $v_1, \ldots, v_n$ do not all share a common factor; indeed, if $\gcd(v_1, \ldots, v_n)=g>1$, then $\ML(v_1, \ldots, v_n)=\ML(v_1/g, \ldots, v_n/g)$, where each $v_i/g$ is an integer.  We also lose nothing by taking the speeds to be pairwise distinct.

Second, recall that for fixed $v_1, \ldots, v_n$, we are looking for the real number (time) $t$ that maximizes the function
$$f(t)=\min_{1 \leq i \leq n}\Vert tv_i \Vert.$$
Since $f(t+1)=f(t)$, it suffices to consider $t$ in the interval $[0,1)$.  In fact, since $f(t)=f(-t)$, we could further restrict our attention to $[0, 1/2]$.  Moreover, all local maxima of $f$ should occur at times where there are two runners of minimum distance to the origin and these runners are on ``different'' halves of the circular track, for otherwise we could obtain a larger loneliness by perturbing the time.  We make this observation precise in the following proposition.
\begin{prop}\label{prop:times-to-check}
Let $v_1, \ldots, v_n$ be positive integers ($n \geq 2$) with $\gcd(v_1, \ldots, v_n)=1$.  Then every local maximum of the function
$$f(t)=\min_{1 \leq i \leq n}\Vert tv_i \Vert$$
occurs at a time of the form
$$t_0=\frac{m}{v_i+v_j},$$
where $1 \leq i<j \leq n$ and $m$ is an integer.
\end{prop}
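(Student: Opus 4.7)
The plan is to exploit the piecewise-linear sawtooth structure of each $g_i(t):=\Vert tv_i\Vert$. Away from the corners $tv_i \in \tfrac{1}{2}\mathbb{Z}$, the function $g_i$ has slope $\pm v_i$, with local maxima of value $\tfrac{1}{2}$ at $t=(k+\tfrac{1}{2})/v_i$ and zeros at $t=k/v_i$. I would organize the argument around the \emph{active set} $I:=\{i:g_i(t_0)=f(t_0)\}$ of indices realizing the minimum at $t_0$.

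First I would establish the necessary slope conditions: if $t_0$ is a local maximum of $f$, then, since $g_k(t_0)>f(t_0)$ strictly for $k\notin I$, any non-increasing behavior of $f$ immediately to the right of $t_0$ must be witnessed by an active index, so some $i\in I$ satisfies $g_i'(t_0^+)\leq 0$. Symmetrically, some $j\in I$ satisfies $g_j'(t_0^-)\geq 0$. (If no active $g_k$ had non-positive right slope, all active $g_k$ and all inactive $g_k$ would strictly exceed $f(t_0)$ slightly to the right, contradicting that $t_0$ is a local maximum.)

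Next I would split into cases on $f(t_0)$. If $f(t_0)=\tfrac{1}{2}$, then because $\Vert x\Vert\leq\tfrac{1}{2}$ always, every $t_0v_k$ is a half-integer, so for any distinct $i<j$ the quantity $t_0(v_i+v_j)$ is an integer $m$, giving $t_0=m/(v_i+v_j)$. In the remaining case $f(t_0)<\tfrac{1}{2}$, each active $t_0v_k$ is strictly between an integer and a half-integer, so $g_k$ is smooth near $t_0$ with two-sided slope exactly $\pm v_k$. The indices $i$ and $j$ above must then be distinct, because if $i=j$ the function $g_i$ would have a local maximum at $t_0$, forcing $g_i(t_0)=\tfrac{1}{2}$ and contradicting $f(t_0)<\tfrac{1}{2}$. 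The slope conditions translate to $t_0v_i\in(m_i+\tfrac{1}{2},m_i+1)$ with $g_i(t_0)=m_i+1-t_0v_i$, and $t_0v_j\in(m_j,m_j+\tfrac{1}{2})$ with $g_j(t_0)=t_0v_j-m_j$. Equating $g_i(t_0)=g_j(t_0)$ yields $t_0(v_i+v_j)=m_i+m_j+1$, i.e.\ $t_0=m/(v_i+v_j)$ with $m=m_i+m_j+1$; relabel to ensure $i<j$.

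The argument is essentially a careful accounting of one-sided slopes of sawtooth functions, and I do not anticipate any serious obstacle. The only mild subtlety is ruling out that a single active index simultaneously controls both sides of $t_0$, and this is precisely what the case split $f(t_0)=\tfrac{1}{2}$ versus $f(t_0)<\tfrac{1}{2}$ handles cleanly.
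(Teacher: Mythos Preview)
Your proof is correct and follows essentially the same approach as the paper for the main case $f(t_0)<\tfrac12$: both arguments identify two active indices with opposite one-sided slopes (the paper phrases this via explicit perturbations $t_0\pm\eta$, you via one-sided derivatives) and add the resulting equalities to get $t_0(v_i+v_j)\in\mathbb{Z}$. For the boundary case $f(t_0)=\tfrac12$, your treatment is actually cleaner: you simply observe that each $t_0v_k$ is a half-odd-integer, so any sum $t_0(v_i+v_j)$ is an integer, whereas the paper invokes the hypothesis $\gcd(v_1,\dots,v_n)=1$ and a B\'ezout argument to pin down $t_0$ first; your version shows this hypothesis is not needed for the proposition.
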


\begin{proof}
Fix a time $t_0 \in \mathbb{R}$ at which $f(t)$ achieves a local maximum.  We first consider the case $f(t_0)=1/2$.  Write $t_0=a/b$, where $\gcd(a,b)=1$ and $b>0$.  Then $av_i \equiv b/2 \pmod{b}$ for all $1 \leq i \leq n$.  In particular, $b$ is even, whence $a$ is odd.  There is an integer $c$ such that $ac \equiv 1 \pmod{b}$.  Since $\gcd(v_1, \ldots, v_n)=1$, there exist integers $k_1, \ldots, k_n$ such that $k_1v_1+\cdots+k_nv_n=c$.  Then
$$\frac{b}{2} (k_1+\cdots+k_n)\equiv a(k_1v_1+\cdots+k_nv_n) \equiv ac \equiv 1 \pmod{b},$$
which implies that $b/2$ is relatively prime to $b$.  So in fact $b=2$, and we conclude that all of the $v_i$'s are odd.  (Incidentally, this argument shows that $\ML(v_1, \ldots, v_n)=1/2$ if and only if every $v_i$ is odd.)  We know that (for instance) $v_1+v_2$ is even, so we can express $t_0=a/2$ as $t_0=m/(v_1+v_2)$ for some integer $m$.

We now consider the case where $f(t_0)<1/2$ strictly.  The equality $\Abs{t_0v_i}=f(t_0)$ can be realized only as $t_0v_i=\ell+f(t_0)$ or $t_0v_i=\ell-f(t_0)$, where $\ell$ is an integer.  Suppose that as $i$ ranges over $1, \ldots, n$, only the first type of equality is realized.  Then for some $\varepsilon>0$, the fractional part of every $t_0v_i$ is contained in the interval $[f(t_0), 1-f(t_0)-\varepsilon]$.  For $\eta=\varepsilon/(2 \max_i v_i)$, we obtain the contradiction $f(t_0+\eta)>f(t_0)$.  Similarly, if only the second type of equality is realized, then we obtain the contradiction $f(t_0-\eta)>f(t_0)$.  So we conclude that both types of equality are realized, and there exist indices $i,j$ and integers $k, \ell$ such that $t_0v_i=k+f(t_0)$ and $t_0v_j=\ell-f(t_0)$.  (Note that $i \neq j$ since $\Abs{t_0v_i} \neq \Abs{t_0v_j}$; here, we used $f(t_0)< 1/2$.)  Summing and solving for $t_0$ gives
$$t_0=\frac{k+\ell}{v_i+v_j},$$
as desired.
\end{proof}

Of course, the global maximum is among these local maxima for $t$ in the interval $[0,1)$.  This proposition shows that we can determine $\ML(v_1, \ldots, v_n)$ by checking only finitely many times, where this number grows at most cubically with the size of the fastest speed.  This observation is particularly useful for performing computational experiments.  Another advantage of this perspective is that it gives us a useful way to look at candidate times in ``chunks'' (according to the pairs $i,j$) instead of all at once.  This way of thinking often reveals underlying structure that is otherwise opaque.

Third, we mention the concept of a \emph{pre-jump}, as used by Bienia, Goddyn, Gvozdjak, Seb\H{o}, and Tarsi \cite{bienia}.  Their insight is essentially that if we know the values of $\Abs{t_1v_i}$ and $\Abs{t_2v_i}$, then we can say something about $\Abs{(t_1+\alpha t_2)v_i}$, where $\alpha$ is an integer.  This is particularly useful with many of the quantities involved are zero.  For instance, if the speeds $v_1, \ldots, v_r$ are all divisible by $g$ and the speeds $v_{r+1}, \ldots, v_n$ are not, then adding multiples of $1/g$ to a time ``moves'' the runners with speeds $v_{r+1}, \ldots, v_n$ while ``fixing'' those with speeds $v_1, \ldots, v_r$.

\section{Achieving the spectrum}\label{sec:spectrum}
A bit of experimentation suggests an explicit construction that achieves the entire discrete part of the spectrum in the Loneliness Spectrum Conjecture.  The motivating idea is that one can obtain small maximum loneliness values with $n$ runners by starting with a tight speed set for $n-1$ runners and choosing the last speed $v_n$ so that $\Abs{t_0v_n}=0$ at every ``equality time'' $t_0$ for the first $n-1$ runners.

\begin{theorem}\label{thm:spectrum-construction}
For every integer $n \geq 2$ and every natural number $s$, we have $$\ML(1,2,\ldots, n-1, ns)=\frac{s}{ns+1}.$$
\end{theorem}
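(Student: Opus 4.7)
The plan is to establish the two inequalities separately. For the lower bound $\ML(1, 2, \ldots, n-1, ns) \geq s/(ns+1)$, I would exhibit the witness $t^* = s/(ns+1)$: for each $i \in \{1, \ldots, n-1\}$, direct computation gives $\Vert t^* i \Vert = \min(is, ns+1-is)/(ns+1) \geq s/(ns+1)$, using $is \geq s$ and $ns + 1 - is \geq s$ (the latter holding since $i \leq n-1$); and $t^* \cdot ns = s - s/(ns+1)$, so $\Vert t^* \cdot ns \Vert = s/(ns+1)$ as well.

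For the upper bound, I would invoke Proposition~\ref{prop:times-to-check} to reduce to checking the finite set of candidate times $t_0 = m/(v_i + v_j)$ and split into cases based on the sum $M = v_i + v_j$. The easy cases are when $M \leq n - 1$ (then $v_k = M \in V$, so $\Vert t_0 v_k \Vert = 0$) and when $M = n$ (use $v_k = ns$, giving $\Vert t_0 \cdot ns \Vert = \Vert ms \Vert = 0$ when $\gcd(m, n) = 1$, or use $v_k = n/\gcd(m, n) \in \{1, \ldots, n-1\}$ otherwise). The remaining subcases yield to a common pigeonhole argument: consider the $n$ residues $\{am \bmod M : a = 0, 1, \ldots, n-1\}$ in $\mathbb{Z}/M\mathbb{Z}$. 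In the large-sum Case~1 ($n + 1 \leq M \leq 2n - 3$), $M/n \in (1, 2)$, so two residues lie at circle-distance $\leq 1$, giving $\Vert (a-b) t_0 \Vert \leq 1/M \leq 1/(n+1) \leq s/(ns+1)$. In Case~2 ($M = ns + i$ for some $i \in \{1, \ldots, n-1\}$), $M/n \in (s, s+1)$, so two residues lie at circle-distance $\leq s$, giving $\Vert (a-b) t_0 \Vert \leq s/M \leq s/(ns+1)$. In both subcases $|a - b| \in \{1, \ldots, n-1\} \subset V$, so $f(t_0) \leq s/(ns+1)$.

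The argument is essentially bookkeeping once Proposition~\ref{prop:times-to-check} is in hand and the right pigeonhole setup is identified; I do not anticipate any serious obstacle. The most delicate point is the compatibility of the pigeonhole bounds with the target $s/(ns+1)$: the inequality $1/(n+1) \leq s/(ns+1)$ used in the large-sum Case~1 is tight precisely when $s = 1$ (the classical setting), while in Case~2 the inequality $s/(ns+i) \leq s/(ns+1)$ reduces to $i \geq 1$. Both conditions mesh with the combinatorial structure of $V = \{1, \ldots, n-1, ns\}$ so cleanly that the proof writes itself once one notices that the $n$-point pigeonhole naturally produces differences in exactly $\{1, \ldots, n-1\} \subset V$.
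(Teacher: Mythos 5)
Your proof is correct, and it shares the paper's skeleton --- reduce to the candidate times $t_0=m/(v_i+v_j)$ via Proposition~\ref{prop:times-to-check} and split according to the denominator $M=v_i+v_j$ --- but the estimate you run at each candidate time is genuinely different. The paper treats $s=1$ separately as the known tight case $\ML(1,\ldots,n)=1/(n+1)$ and, for $s\geq 2$, invokes without proof the standard facts that $f(t)\leq 1/n$ for all $t$ (i.e.\ $\ML(1,\ldots,n-1)=1/n$) and that equality forces $t=m/n$, combined with a Farey-type observation that no rational with denominator at most $2n-3$ lies strictly between $1/(n+1)$ and $1/n$. You instead prove the needed bound from scratch at each $t_0=m/M$ by pigeonholing the $n$ residues $am \bmod M$, $0\leq a\leq n-1$, on the cycle $\mathbb{Z}/M\mathbb{Z}$: two of them lie within circular distance $\lfloor M/n\rfloor$, and the difference $\abs{a-b}$ is itself one of the speeds $1,\ldots,n-1$, giving $f(t_0)\leq \lfloor M/n\rfloor/M$, which is at most $s/(ns+1)$ in every case (the range $M\leq n$ being trivial because some speed then annihilates $t_0$). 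This buys you a self-contained argument that handles $s=1$ uniformly and makes explicit the role of the consecutive speeds $1,\ldots,n-1$ (they are exactly what converts the pigeonhole difference into a runner); the paper's version is shorter because it imports the $(n-1)$-runner tight case and its equality characterization. One small remark: your gcd digression in the $M=n$ case is unnecessary, since $t_0\cdot ns=ms$ is an integer for every $m$, but it is harmless.
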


\begin{proof}
The $s=1$ case follows from the known tight case $\ML(1,\ldots, n)=1/(n+1)$, so we restrict our attention to $s \geq 2$.  The proof is a straightforward computation with Proposition~\ref{prop:times-to-check}.  We have to check times with denominators between $1$ and $2n-3$ and between $ns+1$ and $ns+n-1$.  As before, let $f(t)$ denote the loneliness at time $t$.
\begin{itemize}
\item Consider $t=m/d$, for $1 \leq d \leq 2n-3$.  We know that $f(t) \leq 1/n$ because of the speeds $1, \ldots, n-1$.  Since $f(t)$ is a nonnegative rational number with denominator at most $2n-3$, we must have either $f(t)=1/n$ or $f(t)\leq 1/(n+1)$.  We could have $f(t)=1/n$ only for $t=m/n$, but in this case $\Abs{t(ns)}=0$ gives $f(t)=0$, so we conclude that $f(t) \leq 1/(n+1)$.
\item Consider $t=m/(ns+j)$, for $1 \leq j \leq n-1$.  Recall that $f(t) \leq 1/n$.  So the maximum possible value for $f(t)$ is $s/(ns+j)$, and in fact this value is achieved for $m=s$.
\end{itemize}
Taking the maximum value of $f(t)$ among these possibilities gives that $$\ML(1,2, \ldots, n-1, ns)=\frac{s}{ns+1},$$ as desired.
\end{proof}
We remark that this spectrum can be achieved in much the same way by starting with other known equality cases and adding a new fast runner (see Goddyn and Wong \cite{goddyn}); the case work in the computation becomes more extensive, and sometimes a few of the values for small $s$ are not obtained.  We return to this idea in Section~\ref{sec:1-fast}.

\section{Two moving runners}\label{sec:2-runners}
The $n=2$ case of the Spectrum Conjecture is mentioned in a passing remark of Bienia, Goddyn, Gvozdjak, Seb\H{o}, and Tarsi \cite{bienia}; for the sake of completeness, we provide a proof in the language introduced above.

\begin{theorem}\label{thm:2-runners}
Let $v_1, v_2$ be relatively prime positive integers.  If $v_1$ and $v_2$ are both odd, then $$\ML(v_1, v_2)=\frac{1}{2}.$$
Otherwise, $$\ML(v_1, v_2)=\frac{s}{2s+1},$$ where $2s+1=v_1+v_2$.
\end{theorem}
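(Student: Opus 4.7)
\medskip
\noindent\textbf{Proof proposal.} By Proposition~\ref{prop:times-to-check}, every local maximum of $f(t)=\min(\Vert tv_1\Vert,\Vert tv_2\Vert)$ (and hence the global maximum) occurs at some $t=m/(v_1+v_2)$ with $m\in\mathbb{Z}$. So the plan is to compute $f$ at all such times and read off the maximum.

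The key observation is that at such a time $t$, we have $tv_1+tv_2=m\in\mathbb{Z}$, which forces $\{tv_1\}+\{tv_2\}\in\{0,1\}$. If the sum is $0$, then both fractional parts vanish and $f(t)=0$. If the sum is $1$, then $\{tv_1\}=1-\{tv_2\}$, so $\Vert tv_1\Vert=\Vert tv_2\Vert$ and
\[
f(t)=\Vert tv_1\Vert=\left\Vert \frac{mv_1}{v_1+v_2}\right\Vert.
\]
Since $\gcd(v_1,v_2)=1$ implies $\gcd(v_1,v_1+v_2)=1$, as $m$ ranges over $\mathbb{Z}$ the residue $mv_1\bmod(v_1+v_2)$ runs over all of $\mathbb{Z}/(v_1+v_2)\mathbb{Z}$. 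Therefore the values of $f(m/(v_1+v_2))$ are precisely $k/(v_1+v_2)$ for $0\leq k\leq\lfloor(v_1+v_2)/2\rfloor$, and
\[
\ML(v_1,v_2)=\frac{\lfloor(v_1+v_2)/2\rfloor}{v_1+v_2}.
\]

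It remains to split into parities. If $v_1$ and $v_2$ are both odd, then $v_1+v_2$ is even and the expression above equals $1/2$. Otherwise, since $\gcd(v_1,v_2)=1$ rules out both being even, exactly one is even; then $v_1+v_2=2s+1$ is odd, and the expression equals $s/(2s+1)$. This matches the claim.

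I do not anticipate a real obstacle: the whole argument turns on the single arithmetic identity $\{tv_1\}+\{tv_2\}\in\{0,1\}$ at the candidate times, which reduces a two-variable optimization to a one-dimensional question about residues mod $v_1+v_2$. The only mild subtlety is noting that Proposition~\ref{prop:times-to-check} gives only the pair denominator $v_1+v_2$ (no other $v_i+v_j$ is available when $n=2$), so the enumeration of candidate times is complete, and the maximum computed above is genuinely global.
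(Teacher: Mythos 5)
Your proposal is correct and follows essentially the same route as the paper: restrict to the candidate times $t=m/(v_1+v_2)$ via Proposition~\ref{prop:times-to-check}, use $\Vert tv_1\Vert=\Vert tv_2\Vert=\Vert mv_1/(v_1+v_2)\Vert$, and invoke $\gcd(v_1,v_1+v_2)=1$ to see that the extreme residue is attained. The only cosmetic difference is that the paper dispatches the both-odd case directly at $t=1/2$ before the residue argument, whereas you treat both parities uniformly; both are fine.
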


\begin{proof}
If $v_1$ and $v_2$ are both odd, then $\Abs{(1/2) v_1}=\Abs{(1/2) v_2}=1/2$ gives $\ML(v_1,v_2)=1/2$, so we restrict our attention to the case where $v_1$ and $v_2$ are not both odd.  In particular, their sum is odd.  By Proposition~\ref{prop:times-to-check}, we have to check only times $t=m/(v_1+v_2)$, where we know that
$$\Abs{tv_1}=\Abs{tv_2}=\Abs{\frac{mv_1}{v_1+v_2}}.$$
Note that this quantity is always at most $s/(2s+1)$, where $2s+1=v_1+v_2$.  Since $v_1$ and $v_2$ are relatively prime, we also have that $v_1$ is relatively prime to $v_1+v_2$.  So there exists an integer $m$ such that $mv_1 \equiv s \pmod{2s+1}$, which shows that the loneliness $s/(2s+1)$ is attained.
\end{proof}

\section{Three moving runners}\label{sec:3-runners}
The $n=3$ case of the Spectrum Conjecture is much more delicate than the $n=2$ case.  The main idea is that we use a pre-jump to handle the case where two speeds share a large common factor, after which we can control the remaining cases more precisely.  We require the following technical lemma, which says that if we consider all times with denominator the sum of two fixed speeds, then we should get a loneliness of at least $1/3$, up to a rounding error, unless the third runner always ``stays'' at $0$.

\begin{lemma}\label{lem:3-runners}
Let $v_1, v_2, v_3$ be positive integers with $\gcd(v_1,v_2,v_3)=1$, and suppose no two of these speeds have a common factor greater than $2$.  Let $$r=\left\lfloor \frac{v_1+v_2}{3} \right\rfloor,$$ and let $L$ denote the maximum loneliness that is achieved at a time of the form $$t=\frac{m}{v_1+v_2}$$ ($m \in \mathbb{Z}$).  Then we have the following dichotomy:
\begin{itemize}
\item If $v_3$ is a multiple of $v_1+v_2$, then $L=0$.
\item If $v_3$ is not a multiple of $v_1+v_2$, then $L \geq r/(v_1+v_2)$.
\end{itemize}
\end{lemma}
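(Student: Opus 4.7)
The first bullet is immediate: if $v_3 = kN$ with $N := v_1+v_2$, then $tv_3 = mk \in \mathbb{Z}$ at every $t = m/N$, so $\Vert tv_3 \Vert = 0$ and the loneliness there vanishes; hence $L = 0$.

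For the second bullet, the plan begins by observing that $v_2 \equiv -v_1 \pmod{N}$ forces $\Vert (m/N) v_1 \Vert = \Vert (m/N) v_2 \Vert$, so the loneliness at $t = m/N$ equals $\min\bigl(\Vert m v_1/N \Vert,\, \Vert m v_3/N \Vert\bigr)$. It therefore suffices to exhibit an integer $m$ for which both $m v_1$ and $m v_3$ are congruent modulo $N$ to elements of the central interval $I := [r, N-r]$ of $\mathbb{Z}/N\mathbb{Z}$. I would construct such an $m$ directly using $g := \gcd(v_3, N)$, which is strictly less than $N$ by hypothesis. A useful preliminary is that $I$ always contains a multiple of $g$: if $g \leq r$ then $g \lceil r/g \rceil \in [r, 2r-1] \subseteq I$ (using $N \geq 3r$), while if $g > r$ then $g \leq N/2 \leq N-r$ puts $g$ itself in $I$. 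Picking such a multiple $w^{*} \in I$, the congruence $m v_3 \equiv w^{*} \pmod{N}$ has exactly $g$ solutions forming an arithmetic progression $\{m_0 + k(N/g) : 0 \leq k < g\}$; as $k$ varies the companion residue $(m_0 + k(N/g)) v_1 \bmod N$ traces a secondary arithmetic progression with common difference $\delta := v_1 (N/g) \bmod N$ and length $g$. The task reduces to showing at least one term of this secondary progression lies in $I$.

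I expect this last step to be the main obstacle. I would address it by case analysis. The hypothesis $\gcd(v_1, v_2) \in \{1, 2\}$ implies $\gcd(v_1, g) \in \{1, 2\}$, so the secondary progression has either $g$ or $g/2$ distinct values, lying in a single coset of $(N/g) \mathbb{Z}/N\mathbb{Z}$ (respectively $(2N/g)\mathbb{Z}/N\mathbb{Z}$). When this coset is dense enough---roughly whenever $N > 4r - 2$---a pigeonhole argument forces one element into $I$ by ruling out confinement in the ``bad'' interval $[-r+1, r-1]$ (of size $2r-1$) of $\mathbb{Z}/N\mathbb{Z}$. For the borderline values of $N$ that escape pigeonhole, one exploits the flexibility to vary $w^{*}$ among the several multiples of $g$ in $I$, combined with direct analysis of coset structure; for instance, when $N = 3r$ any coset of $r\mathbb{Z}/3r\mathbb{Z}$ has exactly one representative in each of $[0, r-1]$, $[r, 2r-1]$, $[2r, 3r-1]$, and one checks that a judicious choice of $w^{*}$ places that representative in the middle block. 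The sub-case $\gcd(v_1, v_2) = 2$, in which $\gcd(v_1, v_2, v_3) = 1$ forces $v_3$ odd, introduces an extra parity constraint on $\delta$ but admits a parallel treatment.
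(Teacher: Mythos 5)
Your reduction is the same as the paper's: at $t=m/N$ with $N=v_1+v_2$ one has $\Vert tv_1\Vert=\Vert tv_2\Vert$, so it suffices to find $m$ with both $mv_1$ and $mv_3$ landing in the central block $I=[r,N-r]$ modulo $N$, and your first bullet and the observation that $I$ contains a multiple of $g=\gcd(v_3,N)$ are fine. The gap is in the main mechanism. Your pigeonhole criterion is miscalibrated: with $r=\lfloor N/3\rfloor$ we have $4r-2\approx \tfrac{4}{3}N$, so the condition ``$N>4r-2$'' fails for every $N\gtrsim 14$, and by your own criterion essentially \emph{every} instance is ``borderline.'' Even with the correct calibration, the coset of $m$-values you construct has spacing $(N/g)\gcd(v_1,g)$, so it is guaranteed to meet $I$ (whose length is about $N/3$) only when $g$ is reasonably large; in the generic case $g=1$ (and also $g=2$) the congruence $mv_3\equiv w^{*}$ pins $m$ to a single residue class mod $N$, the ``secondary progression'' degenerates to one point, and no pigeonhole is available. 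Everything then rests on ``varying $w^{*}$,'' i.e.\ on showing that some $x\in I$ has $x\cdot v_1v_3^{-1}\in I$ — but that is precisely the whole content of the lemma, and your sketch does not address it (the $N=3r$, coset-of-$r\mathbb{Z}/3r\mathbb{Z}$ illustration corresponds to the easy large-$g$ situation, not to $g\in\{1,2\}$).

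For comparison, the paper's proof lives entirely inside this hard case: writing $m\equiv \ell u$ with $uv_1\equiv 1$ (or $\equiv 2$) mod $N$, it must show that the arithmetic progression $\{\ell j: r\le \ell\le N-r\}$, $j\equiv uv_3$, enters $[r,N-r]$, and this requires a genuine case analysis on $j$ — the delicate regime $N-2r+2\le j<N/2$ is handled by showing the residues cannot alternate between two short forbidden intervals, using the step $h=N-2j$. Moreover, when $\gcd(v_1,v_2)=2$ only half the residues are reachable and the paper needs an extra idea (a pre-jump by $1/2$, exploiting that $v_3$ is odd) to widen the target set; your remark that this sub-case ``admits a parallel treatment'' with an ``extra parity constraint on $\delta$'' glosses over this. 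As it stands, the proposal defers the essential quantitative step rather than proving it, so it is not a complete argument.
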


\begin{proof}
The first statement is trivial.  For second statement, fix $v_1, v_2, v_3$ with $v_3$ not a multiple of $v_1+v_2$.  We condition on $\gcd(v_1, v_2)$, which must be $1$ or $2$ by assumption.  Recall that at every $t=m/(v_1+v_2)$, we have $\Abs{tv_1}=\Abs{tv_2}$.

First, suppose $\gcd(v_1,v_2)=1$.  Then there is an integer $u$ such that $uv_1 \equiv 1 \pmod{v_1+v_2}$.  Write $m \equiv \ell u \pmod{v_1+v_2}$, so that $\ell$ ranges over the residues modulo $v_1+v_2$ as $m$ does so, and consider the times $$t=\frac{\ell u}{v_1+v_2}.$$
Then for all $r \leq \ell \leq v_1+v_2-r$, we have that $\Abs{tv_1}$ (equivalently, $\Abs{tv_2}$) is at least $r/(v_1+v_2)$.  We now claim that $$\Abs{tv_3}\geq \frac{r}{v_1+v_2}$$ for some $\ell$ in this range.  In other words, some element of $$ruv_3, (r+1)uv_3, \ldots, (v_1+v_2-r)uv_3$$ leaves a residue between $r$ and $v_1+v_2-r$ modulo $v_1+v_2$.

How this comes about depends on the residue of $uv_3$ modulo $v_1+v_2$.  We may take this residue to be between $1$ and $(v_1+v_2)/2$ since otherwise we can replace $v_3$ with $c(v_1+v_2)-v_3$ for some large integer $c$ that makes this quantity positive. We handle the various possibilities separately:
\begin{itemize}
\item Suppose $uv_3 \equiv 1 \pmod{v_1+v_2}$.  Then $ruv_3 \equiv r \pmod{v_1+v_2}$, as desired.
\item Suppose $uv_3 \equiv j \pmod{v_1+v_2}$, for some $2 \leq j \leq v_1+v_2-2r+1$.  The upper bound on $j$ tells us that if $\ell j<k(v_1+v_2)+r$ for some integer $k$, then $(\ell+1)j \leq (k+1)(v_1+v_2)-r$.  In other words, incrementing $\ell$ cannot make $\ell j$ ``skip'' over all of the residue classes (strictly) between $r$ and $v_1+v_2-r$.  So it remains only to show that the $\ell j$'s are not all contained in an interval of the form $$k(v_1+v_2)-r+1, \ldots, k(v_1+v_2)+r-1$$ for any integer $k$.  The difference between the largest and smallest elements of this interval is $2r-2$.  At the same time, the lower bound on $j$ gives
$$(v_1+v_2-r)j-rj \geq 2(v_1+v_2-2r) \geq 2r>2r-2,$$
so the $\ell j$'s cannot all be contained in such a short interval.  We conclude that some $\ell j$ leaves a residue between $r$ and $v_1+v_2-r$, as desired.
\item Suppose $uv_3 \equiv j \pmod{v_1+v_2}$, for some $v_1+v_2-2r+2 \leq j <(v_1+v_2)/2$.  Recall that for $r \leq \ell \leq v_1+v_2-r$, we are done if the quantity $\ell j$ ever leaves a residue between $r$ and $v_1+v_2-r$.  In particular, this possibility obtains if there is any $r \leq \ell \leq v_1+v_2-r-1$ such that the residue of $\ell j$ is between $r-j$ and $v_1+v_2-r-j$, for then the residue of $(\ell+1)j$ is between $r$ and $v_1+v_2-r$.  So it suffices to show that the residues of $\ell j$, for $r \leq \ell \leq v_1+v_2-r-1$, cannot be confined to the intervals
$$I_1=\{v_1+v_2-r-j+1, v_1+v_2-r-j+2, \ldots, r-1\}$$
and
$$I_2=\{v_1+v_2-r+1, v_1+v_2-r+1, \ldots, v_1+v_2+r-j-1\}.$$
Note that the difference between the largest and smallest elements of $I_1$ is
$$(r-1)-(v_1+v_2-r-j+1)<\frac{1}{6}(v_1+v_2)-2<j,$$
where we used the upper bound on $j$.  Similarly, the difference between the largest and smallest elements of $I_2$ is
$$(r-j-1)-(-r+1)=2r-j-2<j.$$
These bounds imply that consecutive residues of $\ell j$ and $(\ell+1)j$ cannot both lie in a single one of these two intervals, so we have to worry about only the possibility in which $\ell j$ alternately lies in $I_1$ and $I_2$ as $\ell$ grows from $r$ to $v_1+v_2-r-1$.  If this were the case, we would have at least
$$\left\lfloor \frac{(v_1+v_2-r-1)-r}{2} \right\rfloor \geq \frac{1}{6}(v_1+v_2)-1$$
values of $\ell$ (increasing in increments of $2$) with $\ell j$ leaving a residue in $I_1$.  Note that as $\ell$ increases by $2$, the residue of $\ell j$ increases by
$$h=v_1+v_2-2j,$$
which is nonzero by the condition on $j$.  So the difference between the the largest and smallest of these residues is at least $\frac{1}{6}(v_1+v_2)-2$, but then it is impossible to fit this entire arithmetic progression into $I_1$.  So we conclude that in fact some $\ell j$ leaves a residue between $r$ and $v_1+v_2-r$, as desired.
\item Suppose $uv_3 \equiv (v_1+v_2)/2 \pmod{v_1+v_2}$.  Then either $ruv_3$ or $(r+1)uv_3$ leaves a residue of $(v_1+v_2)/2$ modulo $v_1+v_2$, and this is certainly between $r$ and $v_1+v_2-r$.
\end{itemize}
This concludes the argument for the $\gcd(v_1,v_2)=1$ case.
\\

Second, suppose $\gcd(v_1,v_2)=2$.  Then there is an integer $u$ such that $uv_1 \equiv 2 \pmod{v_1+v_2}$.  Note that $v_3$ is odd due to our gcd restrictions.  Now, write even $m$ as $m \equiv \ell u \pmod{v_1+v_2}$, so that $\ell$ ranges over the residues $1,2,\ldots, (v_1+v_2)/2$ modulo $v_1+v_2$ as $m$ ranges over the even residues modulo $v_1+v_2$, and consider times $t=(\ell u)/(v_1+v_2)$.  Then for all $r/2 \leq \ell \leq (v_1+v_2-r)/2$, we have that $\Abs{tv_1}$ (equivalently, $\Abs{tv_2}$) is at least $r/(v_1+v_2)$.  We now claim that either $$\Abs{tv_3}\geq \frac{r}{v_1+v_2} \quad \text{or} \quad \Abs{tv_3} \leq \frac{1}{2}-\frac{r}{v_1+v_2}$$
for some $\ell$ in this range.  The second possibility is sufficient to establish the desired result because at the time $t+ 1/2$ (which is still of the form $m/(v_1+v_2)$), we have
$$\Abs{\left(t+\frac{1}{2}\right)v_1}=\Abs{tv_1}, ~ \Abs{\left(t+\frac{1}{2}\right)v_2}=\Abs{tv_2}, ~ \text{and} ~\Abs{\left(t+\frac{1}{2}\right)v_3}=\frac{1}{2}-\Abs{t_1v_3}.$$
(We can think of this manipulation as a pre-jump with the times $t$ and $1/2$.)  So our claim is that some element of
$$\left(\frac{r}{2} \right)uv_3, \left(\frac{r}{2}+1\right)uv_3, \ldots, \left(\frac{v_1+v_2-r}{2}\right)uv_3$$
leaves a residue between $r$ and $v_1+v_2-r$ or between $r-(v_1+v_2)/2$ and $(v_1+v_2)/2-r$ modulo $v_1+v_2$.

As before, we divide cases according to the residue of $uv_3$, where we can take this residue to be between $1$ and $(v_1+v_2)/2$.  Because the arguments are essentially the same as what we presented above in the $\gcd(v_1,v_2)=1$ case, we provide only sketches.
\begin{itemize}
\item Suppose $uv_3 \equiv j \pmod{v_1+v_2}$, for some $1 \leq j \leq v_1+v_2-2r+1$.  The upper bound on $j$ tells us that incrementing $\ell$ cannot make $\ell j$ ``skip'' over either of the two forbidden intervals of residues, so we have to worry about only the cases where the $\ell j$'s are either all contained in
$$\frac{v_1+v_2}{2}-r+1, \frac{v_1+v_2}{2}-r+2, \ldots, r-1$$
or all contained in
$$v_1+v_2-r+1, v_1+v_2-r+2, \ldots, \frac{v_1+v_2}{2}+r-1.$$
But neither of these intervals is long enough to contain the entire arithmetic progression of $\ell j$'s.
\item Suppose $uv_3 \equiv j \pmod{v_1+v_2}$, for some $v_1+v_2-2r+2 \leq j <(v_1+v_2)/2$.  The argument then goes roughly as in the third bullet above, except that we now have $h \geq 2$ since $v_1+v_2$ is even.
\item Suppose $uv_3 \equiv (v_1+v_2)/2 \pmod{v_1+v_2}$.  Then $ruv_3$ leaves a residue of $0$ or $(v_1+v_2)/2$ modulo $v_1+v_2$, either of which is sufficient.
\end{itemize}
This concludes the argument for the $\gcd(v_1,v_2)=2$ case.
\end{proof}

Lemma~\ref{lem:3-runners} will handle most of the ``difficult'' sets of speeds in the $n=3$ case of the Spectrum Conjecture.

\begin{theorem}\label{thm:3-runners}
Let $v_1, v_2, v_3$ be positive integers with $\gcd(v_1, v_2, v_3)=1$.  Then we have either $$\ML(v_1, v_2, v_3)=\frac{s}{3s+1} \quad \text{for some } s \in \mathbb{N} \quad \text{or} \quad \ML(v_1, v_2, v_3) \geq \frac{1}{3}.$$
\end{theorem}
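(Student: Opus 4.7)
The plan is to combine Lemma~\ref{lem:3-runners} with a pre-jump argument and then to do case analysis on the residues of $v_1, v_2, v_3$ modulo $3$. Throughout I assume $\gcd(v_1, v_2, v_3) = 1$ and $v_1 < v_2 < v_3$.

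First I handle the case where some pair shares a common factor $g \geq 3$; by relabeling, say $g = \gcd(v_1, v_2)$, so that $\gcd(g, v_3) = 1$ automatically. Applying Theorem~\ref{thm:2-runners} to $v_1, v_2$ (the reduced pair $v_1/g, v_2/g$ consists of distinct positive integers, hence sums to at least $3$), I find a time $t_1$ with $\Abs{t_1 v_1}, \Abs{t_1 v_2} \geq 1/3$. I then perform a pre-jump $t_1 \mapsto t_1 + \alpha/g$: since $g \mid v_1, v_2$, the distances $\Abs{\cdot v_1}, \Abs{\cdot v_2}$ are preserved; and since $\gcd(v_3, g) = 1$, the $g$ positions $(t_1 + \alpha/g)v_3 \bmod 1$ for $\alpha = 0, 1, \ldots, g-1$ are equally spaced around the unit circle at distance $1/g$, so at least one lies within $1/(2g) \leq 1/6$ of $1/2$ and hence satisfies $\Abs{\cdot v_3} \geq 1/2 - 1/(2g) \geq 1/3$. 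This gives $\ML(v_1, v_2, v_3) \geq 1/3$.

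Otherwise no pair has common factor exceeding $2$, so Lemma~\ref{lem:3-runners} applies to any pair $(v_i, v_j)$ whose third runner $v_k$ is not a multiple of $v_i + v_j$; the lemma's hypothesis can fail only for the pair $(v_1, v_2)$, and only when $v_3 = c(v_1+v_2)$ for some positive integer $c$ (because $v_1, v_2 < v_1+v_3, v_2+v_3$). I now stratify by residues of $v_1, v_2, v_3$ modulo $3$ with two guiding observations: (i) if all three residues agree and are nonzero, then at $t = 1/3$ we have $\Abs{tv_i} = 1/3$ for each $i$, so $\ML \geq 1/3$; and (ii) if some pair sum is divisible by $3$ and the lemma applies to that pair, then $\ML \geq \lfloor (v_i+v_j)/3 \rfloor / (v_i + v_j) = 1/3$. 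A short enumeration of residue multisets (excluding $\{0,0,0\}$ by $\gcd = 1$) shows that (i) and (ii) dispose of every configuration except $\{0,1,1\}$, $\{0,2,2\}$, and the subcase of $\{0,1,2\}$ in which $v_1, v_2$ carry the $1$- and $2$-residues and $v_3 = c(v_1+v_2)$.

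In each remaining case I identify a pair with sum $v_i + v_j = 3q+1$ to which the lemma applies, obtaining $\ML \geq q/(3q+1)$, which already lies on the discrete spectrum. To upgrade this to an equality when $\ML < 1/3$, I use Proposition~\ref{prop:times-to-check} to write $\ML = r^*/(v_{i'}+v_{j'})$ for some achieving pair $(i',j')$ and integer $r^* < (v_{i'}+v_{j'})/3$. If the achieving pair also has sum $\equiv 1 \pmod 3$, matching the two bounds forces $\ML = q/(3q+1)$. If the achieving pair instead has sum $\equiv 2 \pmod 3$, writing $v_{i'}+v_{j'} = 3q''+2$, the inequality $q''/(3q''+2) \geq q/(3q+1)$ forces $q'' \geq 2q$; a size comparison using the ordering $v_1 < v_2 < v_3$ (and, in the degenerate subcase, the bound $v_3 \geq v_1+v_2$) then yields $v_1 \leq 0$, a contradiction. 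The main obstacle is the bookkeeping in this last step: each residue subcase requires tracking which runner carries the $0$-residue in order to apply the lemma to the correct pair and to run the size comparison, and the degenerate subcase $v_3 = c(v_1+v_2)$ additionally requires verifying the lemma's hypothesis for the pair $(v_1, v_3)$, which is immediate from $v_2 < v_1 + v_3$.
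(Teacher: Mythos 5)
Your overall architecture matches the paper's (pre-jump when a pair shares a factor $\geq 3$, then Lemma~\ref{lem:3-runners} plus a residue analysis modulo $3$), and several of your subcases do close correctly. But there is a genuine gap in the configuration $v_1\equiv v_2\equiv 2$, $v_3\equiv 0 \pmod 3$ with the multiple of $3$ being the \emph{largest} speed. There, the only pair whose sum is $\equiv 1\pmod 3$ is $(v_1,v_2)$, so your lower bound comes from the \emph{smallest} pair sum, $3q+1=v_1+v_2$, while the achieving pair in your Case B is $(v_1,v_3)$ or $(v_2,v_3)$ with a much larger sum $3q''+2$. The inequality $q''\geq 2q$ then translates only into something like $v_3\geq 2v_1+v_2$, which is perfectly satisfiable (e.g.\ $v_1=2$, $v_2=5$, $v_3=12$); no size comparison yields $v_1\leq 0$, so Case B does not terminate in a contradiction here. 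Worse, in the sub-subcase where additionally $v_3=c(v_1+v_2)$ (e.g.\ $2,5,21$), the lemma fails for the unique $\equiv 1\pmod 3$ pair, so the claimed lower bound $q/(3q+1)$ is not even available, and your enumeration (which flags a degenerate subcase only for the residue multiset $\{0,1,2\}$) misses this possibility.

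This is precisely the point where the paper needs an idea beyond comparing the pairwise bounds: writing $v_1=3a+2$, $v_2=3b+2$, $v_3=3c$, it shows that if the $\equiv 2$-denominator quantity were the maximum then $c>2a+b+2$, and it then evaluates the loneliness at the extra time $t=\tfrac{1}{3}-\tfrac{1}{3v_3}$, where $\Abs{tv_1},\Abs{tv_2}>\tfrac13$ and $\Abs{tv_3}=\tfrac13$, contradicting $\ML<\tfrac13$; the same special time disposes of the sub-subcase $v_3=c(v_1+v_2)$ with $v_1\equiv v_2\equiv 2$. Your proposal contains no analogue of this step, and the arithmetic you do use cannot substitute for it, since the configurations in question really do satisfy all of your inequalities. (A smaller point: in Case A you should take the lower bound from the \emph{largest} applicable pair with sum $\equiv 1\pmod 3$, and note that the achieving pair cannot have sum $\equiv 0\pmod 3$ since then $L_{i,j}=0$; otherwise ``matching the bounds'' does not by itself force $r^*$ to equal $\lfloor (v_{i'}+v_{j'})/3\rfloor$.)
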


\begin{proof}
First of all, suppose some two of the speeds have a common factor of at least $3$, say, $\gcd(v_1,v_2)=g \geq 3$.  Note that $\gcd(g,v_3)=1$.  By Theorem~\ref{thm:2-runners}, there exists a time $t$ such that both $\Abs{tv_1}$ and $\Abs{tv_2}$ are at least $1/3$.  By the Pigeonhole Principle, there is an integer $h$ such that $$\Abs{\left(t+\frac{h}{g}\right)v_3} \geq \frac{1}{2}-\frac{1}{2g} \geq \frac{1}{3}.$$
We also know that $\Abs{(t+h/g)v_1}=\Abs{tv_1}$ and $\Abs{(t+h/g)v_2}=\Abs{tv_2}$, so the loneliness at time $t+h/g$ is at least $1/3$. (We are using a pre-jump with the times $t$ and $1/g$.)  We conclude that $\ML(v_1, v_2, v_3) \geq 1/3$.  Henceforth, we restrict our attention to the case where no two speeds have a common factor greater than $2$.

Next, suppose no speed is a multiple of $3$.  Then the loneliness at time $t=1/3$ is at least $1/3$, and we are done.  So we can restrict our attention to the case where exactly one speed is a multiple of $3$.

For each $1 \leq i<j \leq 3$, let $$r_{i,j}=\left\lfloor \frac{v_i+v_j}{3} \right\rfloor,$$ and let $L_{i,j}$ denote the maximum loneliness that is achieved at a time of the form $t=m/(v_i+v_j)$.  Let $k$ be the remaining element of $\{1,2,3\}$.  Lemma~\ref{lem:3-runners} provides the following dichotomy for each pair $i,j$:
\begin{itemize}
\item If $v_k$ is a multiple of $v_i+v_j$, then $L_{i,j}=0$.
\item If $v_k$ is not a multiple of $v_i+v_j$, then $L_{i,j} \geq r_{i,j}/(v_i+v_j)$.
\end{itemize}
Recall that $\ML(v_1, v_2, v_3)$ is the maximum of the three values $L_{i,j}$.  If any $L_{i,j} \geq 1/3$, then we are done, so we restrict our attention to the case where this does not occur.   In particular, the second case of the dichotomy collapses to $$L_{i,j}=\frac{r_{i,j}}{v_i+v_j}$$ if $v_i+v_j$ is not a multiple of $3$, and the second case becomes completely disallowed if $v_i+v_j$ is a multiple of $3$.

Suppose the second possibility of the dichotomy obtains for each pair $i,j$, i.e., each $L_{i,j}=r_{i,j}/(v_i+v_j)$, where $v_i+v_j$ is not a multiple of $3$.  Thus, the residues of $v_1, v_2, v_3$ modulo $3$ are either $1,1,0$ or $2,2,0$.  In the first scenario, write $v_1=3a+1$, $v_2=3b+1$, and $v_3=3c$, where $a<b$.  Then we have
$$L_{1,2}=\frac{a+b}{3a+3b+2}, \quad L_{1,3}=\frac{a+c}{3a+3c+1}, \quad \text{and} \quad L_{2,3}=\frac{b+c}{3b+3c+1}.$$
Direct comparison shows that $L_{2,3}$ is the largest of these three quantities, so $$\ML(v_1,v_2,v_3)=\frac{b+c}{3b+3c+1},$$ as desired.

In the second scenario, write $v_1=3a+2$, $v_2=3b+2$, and $v_3=3c$, where $a<b$.  Then we have
$$L_{1,2}=\frac{a+b+1}{3a+3b+4}, \quad L_{1,3}=\frac{a+c}{3a+3c+2}, \quad \text{and} \quad L_{2,3}=\frac{b+c}{3b+3c+2}.$$
We are done if $L_{1,2}$ is the largest of these three quantities.  Assume (for contradiction) this does not occur; since $L_{2,3}>L_{1,3}$, we must have $L_{2,3}>L_{1,2}$.  Direct computation gives the inequality
$$c>2a+b+2.$$
Now, consider the time $t=1/3-1/(9c)$.  We compute that the fractional parts of $tv_1$ and $tv_2$ are, respectively,
$$\frac{2}{3}-\frac{3a+2}{9c} \quad \text{and} \quad \frac{2}{3}-\frac{3b+2}{9c},$$
whence both $\Abs{tv_1}$ and $\Abs{tv_2}$ are greater than $1/3$.  Also,
$$\Abs{tv_3}=0+\frac{3c}{9c}=\frac{1}{3}.$$
This alltogether implies that $\ML(v_1,v_2,v_3) \geq 1/3$, contrary to our assumption.  So $L_{1,2}$ must be the largest, as desired.  This exhausts the cases in which the second possibility of the dichotomy obtains for each pair $i,j$.

It remains to treat the case in which the first possibility of the dichotomy obtains for some pair, say, $i=1$, $j=2$.  Then we have
$$L_{1,2}=0, \quad L_{1,3}=\frac{r_{1,3}}{v_1+v_3}, \quad \text{and} \quad L_{2,3}=\frac{r_{2,3}}{v_2+v_3}.$$
If both $v_1+v_3$ and $v_2+v_3$ are equivalent to $1$ modulo $3$, then we are done.  Similarly, if $v_1+v_3 \equiv 1 \pmod{3}$ and $v_2+v_3 \equiv 2 \pmod{3}$, then $L_{1,3}>L_{2,3}$ by direct computation, and we are also done.

So it remains only to treat the case where both $v_1+v_3$ and $v_2+v_3$ are equivalent to $2$ modulo $3$.  In particular, $v_1$ and $v_2$ leave the same residue modulo $3$, so this residue is not $0$.  This in turn implies that $v_3$ is a multiple of $3$ (since exactly one of the speeds is a multiple of $3$).  Then $v_1$ and $v_2$ leave a remainder of $2$ modulo $3$.  At the time $t=1/3-1/(3v_3)$, we obtain a loneliness of $1/3$ (as above), which contradicts our earlier assumption.  This completes the proof.
\end{proof}

A close inspection of the previous two proofs reveals that a maximum loneliness of $1/4$ is obtained only for the speeds $1,2,3$.
\begin{corollary}\label{cor:3-runners}
The only tight set of speeds for $n=3$ (up to scaling) is $1,2,3$.
\end{corollary}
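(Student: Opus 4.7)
The plan is to revisit the proof of Theorem~\ref{thm:3-runners} and pin down exactly which triples $(v_1,v_2,v_3)$ achieve $\ML = 1/4$, the $s=1$ point of the spectrum. Normalize by taking $\gcd(v_1,v_2,v_3)=1$ and $v_1<v_2<v_3$. The proof of Theorem~\ref{thm:3-runners} already shows that $\ML \geq 1/3$ unless (i) no two speeds share a common factor greater than~$2$ and (ii) exactly one speed is a multiple of~$3$; and in that regime it furnishes the identity $\ML = \max_{i<j} L_{i,j}$, where each $L_{i,j}$ comes from Lemma~\ref{lem:3-runners}.

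First I would read off the possible exact values of $L_{i,j}$ from the dichotomy combined with the restriction $\ML < 1/3$: either $L_{i,j} = 0$ (precisely when $v_k \equiv 0 \pmod{v_i+v_j}$), or $L_{i,j} = \lfloor (v_i+v_j)/3 \rfloor/(v_i+v_j)$. The latter equals $1/4$ precisely when $v_i+v_j = 4$ (so $v_i+v_j \equiv 1 \pmod 3$) or $v_i+v_j = 8$ (so $\equiv 2 \pmod 3$), and is at most $1/4$ precisely when $v_i+v_j \leq 4$ or $v_i+v_j \leq 8$ in the respective residue classes. Moreover, $v_k \geq v_i+v_j$ is needed for $L_{i,j} = 0$, which forces $v_k = v_3$ and hence the pair to be $(i,j) = (1,2)$.

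These observations reduce everything to a short finite check, which I would split into two cases. If all three $L_{i,j}>0$, the constraint $v_2+v_3 \leq 8$ together with the residue bounds and $v_1 < v_2 < v_3$ leaves only a handful of candidate triples; each one is eliminated either because some pair sum is a multiple of~$3$ (forcing $\ML \geq 1/3$) or because the triple actually satisfies $v_1+v_2 \mid v_3$ and so really belongs to the other case. If instead $L_{1,2}=0$, then $v_1+v_3, v_2+v_3 \in \{4,5,8\}$, and the ordering $v_1<v_2<v_3$ with positivity forces $v_1+v_3 = 4$ and $v_2+v_3 = 5$, yielding $(v_1,v_2,v_3) = (1,2,3)$. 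One then verifies that $v_1+v_2 = 3$ does divide $v_3 = 3$, confirming we are genuinely in this case and $\ML = 1/4$.

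The main obstacle is clerical rather than conceptual: one has to be careful with the case split so that no small triple slips through the net and to correctly track which residue class modulo~$3$ each pair sum falls into.
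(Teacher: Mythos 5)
Your proposal is correct, and it is built on the same foundation as the paper's own argument: a close inspection of Lemma~\ref{lem:3-runners} and the case analysis in Theorem~\ref{thm:3-runners}, using (via Proposition~\ref{prop:times-to-check}) that $\ML(v_1,v_2,v_3)=\max_{i<j}L_{i,j}$ and that, once $\ML<1/3$, each nonzero $L_{i,j}$ equals $\lfloor (v_i+v_j)/3\rfloor/(v_i+v_j)$ with $v_i+v_j\not\equiv 0\pmod{3}$. Where you differ is in how the check is closed out. The paper fixes the pair attaining $1/4$, forces its sum to be $4$ (hence speeds $1$ and $3$), and then eliminates every remaining speed $v_2\geq 4$ in one stroke by a continuous-time argument: for $5/12<t<7/12$ both $\Abs{t}$ and $\Abs{3t}$ exceed $1/4$, and a runner of speed at least $4$ traverses more than half the track in that window. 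You instead impose $L_{i,j}\leq 1/4$ on all three pairs, which confines $v_1+v_3$ to $\{4,5,8\}$ and $v_2+v_3$ to $\{5,8\}$, and you finish with a purely arithmetic enumeration split according to whether $L_{1,2}=0$ (equivalently $v_1+v_2\mid v_3$, the only pair for which vanishing is possible, as you note). Your route is somewhat more systematic than the paper's terse sketch: it explicitly accounts for the possibility $v_i+v_j=8$ (which also produces the value $1/4$) and for the maximum being attained by a pair other than $\{v_1,v_3\}$, points the sketch leaves implicit; the paper's interval argument, in exchange, avoids listing triples by disposing of all large middle speeds at once. Both arguments are sound, and your finite check does close up (the only surviving triple in either of your cases is $(1,2,3)$, which is indeed tight).
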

\begin{proof}[Proof (sketch).]
If the largest $L_{i,j}$ is $L_{1,3}=1/4$, then we must have $v_1+v_3=4$, which implies that (without loss of generality) $v_1=1$ and $v_3=3$.  Direct computation shows that $\ML(1,v_2,3)>1/4$ whenever $v_2 \geq 4$: consider times $5/12<t<7/12$, where both $\Abs{t}$ and $\Abs{3t}$ are greater than $1/4$; if in this interval the $v_2$ runner traverses a distance greater than $1/2$, then we find a time with loneliness larger than $1/4$.
\end{proof}

In principle, one could reproduce the program of this section for $4$ or more moving runners: if many of the speeds share a large common factor, then the induction hypothesis together with a pre-jump gives the desired result; otherwise, there are only finitely many cases to consider in establishing an analog of Lemma~\ref{lem:3-runners}, after which ad hoc arguments could take care of the remaining sporadic cases.  Given the difficulty and length of the proof for $n=3$, however, this program is probably infeasible for $n \geq 4$, as least for a non-computer-assisted proof.

\section{One very fast runner}\label{sec:1-fast}

It is natural to try to use induction for the Lonely Runner Problem.  One appealing strategy is the following: given speeds $v_1, \ldots, v_n$, use an induction hypothesis to obtain a lower bound for $\ML(v_1, \ldots, v_{n-1})$, with this loneliness achieved at some time $t_0$, then modify $t_0$ in order to obtain a time $t_1$ where every $\Abs{t_1v_i}$ (now for $1 \leq i \leq n)$ remains large.  The following innocuous proposition demonstrates how this approach could play out if one runner is much faster than the rest.
\begin{prop}\label{prop:perturb}
Let $v_1<\cdots < v_{n-1}$ be positive integers ($n \geq 2$) with \linebreak $\ML(v_1, \ldots, v_{n-1}) \geq L$, and fix some $0< \varepsilon<L$.  Then we have that
$$\ML(v_1, \ldots, v_n)\geq L-\varepsilon$$
whenever
$$v_n \geq \left(\frac{L-\varepsilon}{\varepsilon}\right)v_{n-1}.$$
\end{prop}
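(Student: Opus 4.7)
The plan is a direct perturbation argument. By hypothesis there exists a time $t_0$ with $\Vert t_0 v_i \Vert \geq L$ for every $1 \leq i \leq n-1$; I will look for a small perturbation $t_1 = t_0 + \eta$ at which the new runner $v_n$ is also far from the origin, while none of the earlier runners have moved too far.

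The first step is purely quantitative. For any real numbers $a, b$ one has $|\Vert a + b \Vert - \Vert a \Vert| \leq |b|$, so for any perturbation $\eta$ and any $i \leq n-1$,
$$\Vert (t_0+\eta)v_i \Vert \geq \Vert t_0 v_i \Vert - |\eta| v_i \geq L - |\eta| v_{n-1}.$$
In particular, restricting to $|\eta| \leq \varepsilon/v_{n-1}$ guarantees $\Vert (t_0+\eta) v_i \Vert \geq L-\varepsilon$ for every $i \leq n-1$ simultaneously. So the whole task reduces to finding some $\eta$ in the closed interval $I = [-\varepsilon/v_{n-1}, \varepsilon/v_{n-1}]$ with $\Vert (t_0 + \eta)v_n \Vert \geq L-\varepsilon$.

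The second step is a length/covering observation on $\mathbb{R}/\mathbb{Z}$. As $\eta$ ranges over $I$, the point $(t_0+\eta)v_n$ traces a closed interval in $\mathbb{R}$ of length $2\varepsilon v_n/v_{n-1}$. The hypothesis $v_n \geq \left(\frac{L-\varepsilon}{\varepsilon}\right) v_{n-1}$ says exactly that this length is at least $2(L-\varepsilon)$. The ``bad'' set $\{x \in \mathbb{R}/\mathbb{Z} : \Vert x \Vert < L-\varepsilon\}$ is an \emph{open} arc of length $2(L-\varepsilon)$ on the circle, so a closed arc of length $\geq 2(L-\varepsilon)$ cannot be contained in it. Therefore the image arc meets the complementary ``good'' set, which yields an $\eta \in I$ for which $\Vert (t_0+\eta)v_n \Vert \geq L-\varepsilon$. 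Combined with Step 1, this time $t_1 = t_0+\eta$ witnesses $\ML(v_1, \ldots, v_n) \geq L-\varepsilon$.

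There is no real obstacle here; the proof essentially writes itself once the perturbation window $\varepsilon/v_{n-1}$ is matched against the circumference-covering threshold $2(L-\varepsilon)/v_n$. The only mild subtlety is ensuring the comparison between a closed arc (our image) and an open arc (the forbidden region) is strict enough to force intersection on the boundary case $v_n = \left(\frac{L-\varepsilon}{\varepsilon}\right) v_{n-1}$, but the openness of $\{\Vert x \Vert < L-\varepsilon\}$ handles this cleanly.
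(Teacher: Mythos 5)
Your proof is correct and follows essentially the same perturbation argument as the paper: restrict to the window $\lvert \eta\rvert \leq \varepsilon/v_{n-1}$ so the first $n-1$ runners lose at most $\varepsilon$ of loneliness, then note that $v_n$ sweeps an arc of length $2\varepsilon v_n/v_{n-1} \geq 2(L-\varepsilon)$, which must leave the forbidden neighborhood of $0$. Your explicit handling of the boundary case via the openness of the bad arc is a slight refinement of detail, not a different method.
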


\begin{proof}
Choose a time $t_0$ such that $\Abs{t_0v_i}\geq L$ for all $1 \leq i \leq n-1$.  We know that in a time interval of length $\varepsilon/v_{n-1}$, each such runner traverses a distance of at most $\varepsilon$.  Let
$$I=\left[t_0-\frac{\varepsilon}{v_{n-1}}, t_0+\frac{\varepsilon}{v_{n-1}}\right]$$
be the closed interval of all times at most $\varepsilon/v_{n-1}$ away from $t_0$.  Consequently, for every $t \in I$, we have $\Abs{tv_i} \geq L-\varepsilon$ for $1 \leq i \leq n-1$.  In any interval of length $(2\varepsilon)/v_{n-1}$, the runner with speed $v_n$ traverses a distance of
$$\frac{2v_n\varepsilon}{v_{n-1}} \geq 2(L-\varepsilon),$$
which in particular implies that there is some $t \in I$ with $\Abs{tv_n} \geq L-\varepsilon$.
\end{proof}

In order to demonstrate the use of this proposition, we mention an immediate consequence.
\begin{prop}\label{prop:factorial-growth}
Fix real numbers $\alpha>0$ and $c>-1$ such that $$\frac{\alpha}{1+c} \leq \frac{1}{2}.$$
If $v_1, \ldots, v_n$ are positive integers satisfying
$$\frac{v_i}{v_{i-1}} \geq i+c-1$$
for all $2 \leq i \leq n$, then $\ML(v_1, \ldots, v_n)\geq \alpha/(n+c)$.
\end{prop}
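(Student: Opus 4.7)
The plan is to prove Proposition~\ref{prop:factorial-growth} by a straightforward induction on $n$, using Proposition~\ref{prop:perturb} at each step. The key algebraic observation is that the choice $L = \alpha/(n-1+c)$ and $\varepsilon = \alpha/((n-1+c)(n+c))$ makes $L-\varepsilon = \alpha/(n+c)$, and the ratio $(L-\varepsilon)/\varepsilon$ simplifies to exactly $n-1+c$, which matches the growth hypothesis on the ratios $v_i/v_{i-1}$ precisely.

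For the base case $n=1$, we use $\ML(v_1) = 1/2$, which is at least $\alpha/(1+c)$ by the standing hypothesis $\alpha/(1+c) \leq 1/2$. For the inductive step, suppose the result holds for $n-1$ runners, so that $\ML(v_1, \ldots, v_{n-1}) \geq \alpha/(n-1+c)$. Setting $L = \alpha/(n-1+c)$ and $\varepsilon = \alpha/((n-1+c)(n+c))$, I would verify two side conditions before invoking Proposition~\ref{prop:perturb}: first, that $0 < \varepsilon < L$, which holds since $\varepsilon/L = 1/(n+c) \in (0,1)$ (we have $n+c > 1$ for $n \geq 2$ and $c > -1$); and second, that the required speed inequality
\[
v_n \geq \frac{L-\varepsilon}{\varepsilon}\, v_{n-1} = (n-1+c)\, v_{n-1}
\]
is exactly the hypothesis $v_n/v_{n-1} \geq n+c-1$. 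Applying Proposition~\ref{prop:perturb} then yields $\ML(v_1, \ldots, v_n) \geq L - \varepsilon = \alpha/(n+c)$, completing the induction.

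There is no real obstacle here; the statement is essentially a packaged telescoping of Proposition~\ref{prop:perturb}. The only subtlety worth mentioning is that one needs $c > -1$ (rather than just $c \geq -1$) to guarantee that both $n-1+c$ and $n+c$ stay strictly positive throughout the induction, and that the condition $\alpha/(1+c) \leq 1/2$ is needed solely to seed the base case. Everything else is forced by the identity $1/(n-1+c) - 1/(n+c) = 1/((n-1+c)(n+c))$.
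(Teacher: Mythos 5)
Your proof is correct and is exactly the argument the paper intends: the paper's proof consists of the single line ``Use induction on $n$ and apply Proposition~\ref{prop:perturb},'' and your choices $L=\alpha/(n-1+c)$, $\varepsilon=\alpha/((n-1+c)(n+c))$ supply precisely the omitted details. Nothing further is needed.
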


\begin{proof}
Use induction on $n$ and apply Proposition~\ref{prop:perturb}.
\end{proof}
The case $\alpha=1$, $c=1$ tells us that the Lonely Runner Conjecture is satisfied for any set of speeds that grows factorially.  The same argument shows that $\ML(v_1, \ldots, v_n) \geq 1/n$ if $v_1$ and $v_2$ have the same $2$-adic valuation and the subsequent $v_i$'s grow factorially.  We remark that these two propositions also hold for general real speeds.

This discussion motivates giving special attention to the case where one speed is significantly larger than the rest.  More precisely, we examine the following weak version of the Loneliness Spectrum Conjecture.
\begin{conjecture}\label{conj:1-fast}
For every integer $n \geq 4$, there exists a function $f_n: \mathbb{N} \to \mathbb{N}$ such that the following holds: for any positive integers $v_1<\cdots <v_n$ with $v_n>f_n(v_{n-1})$, we have either $$\ML(v_1, \ldots, v_n)=s/(ns+1) \quad \text{for some } s \in \mathbb{N} \quad \text{or} \quad \ML(v_1, \ldots, v_n) \geq 1/n.$$
\end{conjecture}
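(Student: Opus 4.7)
The plan is to prove Conjecture~\ref{conj:1-fast} by induction on $n$, combining the perturbation principle of Proposition~\ref{prop:perturb} with a case analysis driven by the value of $L := \ML(v_1, \ldots, v_{n-1})$. We assume as an inductive hypothesis that the Loneliness Spectrum Conjecture holds for $n-1$ runners. For $n = 4$ this is Theorem~\ref{thm:3-runners}; for general $n$ this is an extra assumption, though only the weaker dichotomy ``$L \geq 1/(n-1)$ or $L = s/((n-1)s+1)$ for some $s \in \mathbb{N}$'' is needed.

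When $L > 1/n$ and the gap is bounded below, Proposition~\ref{prop:perturb} immediately yields $\ML(v_1, \ldots, v_n) \geq 1/n$ once $v_n$ is a fixed multiple of $v_{n-1}$. The smallest element of the $(n-1)$-spectrum strictly exceeding $1/n$ is $2/(2n-1)$, giving $L - 1/n \geq 1/(n(2n-1))$. Applying Proposition~\ref{prop:perturb} with $\varepsilon = L - 1/n$, the ratio $v_n/v_{n-1} \geq 2n-1$ suffices; all larger values of $L$ (up to $1/(n-1)$ and beyond) require only smaller ratios. This disposes of every situation except $L = 1/n$ exactly, i.e., when $(v_1, \ldots, v_{n-1})$ is a tight set for the Lonely Runner Conjecture on $n-1$ runners.

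The remaining critical case is when $(v_1, \ldots, v_{n-1})$ is tight. The plan is to classify these tight sets and, for each, perform a direct computation of the possible values of $\ML(v_1, \ldots, v_n)$ when $v_n$ is very large. By Proposition~\ref{prop:times-to-check}, the minimum loneliness $1/n$ among the first $n-1$ runners is attained at only finitely many times of the form $t_0 = m/(v_i + v_j)$. At each such $t_0$, one analyzes $\Abs{t_0 v_n}$ modulo the relevant denominators. Provided $v_n$ avoids divisibility by any $v_i + v_j$ (analogous to the ``non-multiple'' case in Lemma~\ref{lem:3-runners}), a small perturbation $t_0 \mapsto t_0 + \eta$ can push $\Abs{t v_n}$ up to $1/n$ without disturbing the other runners enough to drop below $1/n$. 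Conversely, when $v_n$ is a multiple of key sums $v_i + v_j$, the fast runner ``freezes'' at zero at these critical times, and careful bookkeeping should force $\ML(v_1, \ldots, v_n)$ onto the discrete spectrum $s/(ns+1)$, directly paralleling the construction of Theorem~\ref{thm:spectrum-construction}, which starts from the tight set $(1, 2, \ldots, n-1)$.

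The main obstacle is the classification of tight $(n-1)$-runner sets. For $n = 4$, Corollary~\ref{cor:3-runners} provides the complete list (only $(1,2,3)$ up to scaling), making the analysis tractable; for $n = 6$, known results on tight 5-runner sets (such as those of Goddyn and Wong \cite{goddyn}) permit a similar treatment. For general $n$, however, no such classification exists, and without it the explicit case analysis sketched above cannot be carried out; this is the principal barrier to a proof for all $n$. A secondary difficulty is making the perturbation step quantitative enough to distinguish the two sides of the dichotomy as a function of the arithmetic of $v_n$ relative to the fixed tight subset, which is what determines the explicit form of $f_n$ (and likely forces a bound polynomial in $v_{n-1}$ rather than linear).
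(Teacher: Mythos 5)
First, note that the statement you were asked about is a \emph{conjecture} in the paper: it is proved there only for $n=4$ and $n=6$ (Theorem~\ref{thm:4-5-6}), via a reduction of the tight case to an explicit finite computation (Lemmas~\ref{lem:divisible} and~\ref{lem:divisible-2}, Theorems~\ref{thm:condition} and~\ref{thm:reduction-to-computation}) combined with the known classifications of tight $3$- and $5$-runner sets. Your overall architecture (dispose of $L>1/n$ by Proposition~\ref{prop:perturb}, reduce $L=1/n$ to tight $(n-1)$-sets with one very fast runner) matches the paper's, and you correctly identify the classification of tight sets as the barrier for general $n$. But two steps of your plan have genuine problems. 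For $L>1/n$ you invoke the Spectrum Conjecture for $n-1$ runners to get $L\geq 2/(2n-1)$; that hypothesis is open for $n-1\geq 4$, so as written your argument could not even recover the $n=6$ case. The paper's Lemma~\ref{lem:1-fast} shows the hypothesis is unnecessary: by Proposition~\ref{prop:times-to-check}, $L$ is a rational with denominator at most $2v_{n-1}-1$, hence $L\geq 1/n+1/(n(2v_{n-1}-1))$, and Proposition~\ref{prop:perturb} applies with a threshold quadratic in $v_{n-1}$.

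Second, your treatment of the tight case relies on the wrong arithmetic and on an invalid perturbation. At a tight equality time $t_j$, some runner among $v_1,\ldots,v_{n-1}$ sits at distance exactly $1/n$, so one cannot perturb $t$ to raise $\Vert t v_n\Vert$ ``without disturbing the other runners enough'': any perturbation immediately pushes some runner strictly below $1/n$. The correct dichotomy (Lemma~\ref{lem:divisible-2}) is governed not by divisibility of $v_n$ by pairwise sums $v_i+v_j$, but by the residue $Q$ of $v_n$ modulo $D$, the lcm of the denominators of the equality times: if $\Vert t_jQ\Vert\geq 1/n$ for some $j$ (non-admissible $Q$), then $\ML=1/n$ is attained at $t_j$ itself with no perturbation; if $Q$ is admissible, then for all large $v_n$ one has $\ML<1/n$ strictly, given by an explicit formula. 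Moreover, that formula landing on the spectrum $s/(ns+1)$ is not something ``careful bookkeeping should force'': it holds if and only if the equality and integrality conditions (II.b) of Theorem~\ref{thm:condition} hold, and these must be verified tight set by tight set (the paper checks $1,\ldots,n-1$; $1,3,4,5,9$; and $1,3,4,7$) --- indeed, a tight set violating them would refute the conjecture. So beyond the classification barrier you name, your plan is missing the residue-class analysis modulo $D$ that actually decides the tight case.
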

(Note that we do not require the speeds to lack a common factor.)  In this section and the next two sections, we develop an explicit way to determine whether or not this conjecture holds for $n$ runners based on the tight speed sets for $n-1$ runners.

The conjecture immediately splits into two cases, depending on whether or not $\ML(v_1, \ldots, v_{n-1})$ is strictly larger than $1/n$.  For the case $\ML(v_1, \ldots, v_{n-1})>1/n$, we quickly obtain an affirmative answer to Conjecture~\ref{conj:1-fast} with $f_n(v)$ quadratic in $v$ (and independent of $n$).
\begin{lemma}\label{lem:1-fast}
Let $v_1<\cdots < v_{n-1}$ be positive integers ($n \geq 3$) with $$\ML(v_1, \ldots, v_{n-1})=L>\frac{1}{n}.$$
Then we have
$$ML(v_1, \ldots, v_n) \geq \frac{1}{n}$$
whenever 
$$v_n \geq v_{n-1}(2v_{n-1}-1).$$
\end{lemma}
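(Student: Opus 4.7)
The plan is to execute the perturbation strategy of Proposition~\ref{prop:perturb} using the slack furnished by the strict inequality $L > 1/n$. Choose a time $t_0$ with $\Abs{t_0 v_i} \geq L$ for all $1 \leq i \leq n-1$, set $\delta = L - 1/n > 0$, and consider the closed interval $I = [t_0 - \delta/v_{n-1},\; t_0 + \delta/v_{n-1}]$. Exactly as in Proposition~\ref{prop:perturb}, the triangle inequality yields $\Abs{tv_i} \geq L - \delta = 1/n$ for every $t \in I$ and $1 \leq i \leq n-1$, so it remains only to locate some $t^* \in I$ with $\Abs{t^* v_n} \geq 1/n$.

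The key quantitative step is to bound $\delta$ from below using the discreteness coming from Proposition~\ref{prop:times-to-check} (which applies after dividing the first $n-1$ speeds by their gcd, an operation that preserves $\ML$). That proposition forces $L$ to be a rational whose denominator in lowest terms divides some $v_i+v_j$ with $1 \leq i < j \leq n-1$, and in particular the denominator $d$ satisfies $d \leq v_{n-2}+v_{n-1} \leq 2v_{n-1}-1$. Writing $L = p/d$ in lowest terms, the strict inequality $np > d$ between positive integers forces $np \geq d+1$, whence
$$\delta \;\geq\; \frac{1}{nd} \;\geq\; \frac{1}{n(2v_{n-1}-1)} \qquad \text{and} \qquad |I| \;\geq\; \frac{2}{n\,v_{n-1}(2v_{n-1}-1)}.$$
Under the hypothesis $v_n \geq v_{n-1}(2v_{n-1}-1)$, this reduces to $|I| \geq 2/(nv_n)$.

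Finally, I would examine the ``bad set'' $B = \{t : \Abs{tv_n} < 1/n\}$, which is a disjoint periodic union of open intervals of length exactly $2/(nv_n)$ separated by gaps of positive length $(n-2)/(nv_n)$ (here is where $n \geq 3$ is used). Because $|I|$ meets or exceeds the length of a single bad interval, the closed interval $I$ cannot fit inside any one connected component of $B$; since $I$ is connected and the components of $B$ are pairwise disjoint open intervals, $I$ cannot lie entirely inside $B$ at all. Any point of $I \setminus B$ furnishes the desired $t^*$, completing the proof.

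The main obstacle is purely bookkeeping: one must squeeze exactly enough discreteness out of Proposition~\ref{prop:times-to-check} to see that the gap $L - 1/n$ is at least $1/(n(2v_{n-1}-1))$, and this is precisely what drives the polynomial-in-$v_{n-1}$ threshold on $v_n$ stated in the lemma. The topological argument in the last paragraph is routine by comparison.
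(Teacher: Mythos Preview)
Your proof is correct and follows essentially the same approach as the paper: both extract the discreteness bound $L \geq \tfrac{1}{n} + \tfrac{1}{n(2v_{n-1}-1)}$ from Proposition~\ref{prop:times-to-check} and then run the perturbation argument of Proposition~\ref{prop:perturb}. The only cosmetic difference is that the paper invokes Proposition~\ref{prop:perturb} as a black box with $\varepsilon = \tfrac{1}{n(2v_{n-1}-1)}$, whereas you unroll that proposition inline and phrase the last step in terms of the ``bad set'' $B$.
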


\begin{proof}
By Proposition~\ref{prop:times-to-check}, we know that $$L=\frac{m}{v_i+v_j}$$ for some integer $m$ and some $1 \leq i<j \leq n-1$.  In particular, the inequality $$nm>v_i+v_j$$
in the integers implies that 
$$nm \geq v_i+v_j+1.$$
We then compute
$$L \geq \frac{v_i+v_j+1}{n(v_i+v_j)}=\frac{1}{n}+\frac{1}{n(v_i+v_j)} \geq \frac{1}{n}+\frac{1}{n(2v_{n-1}-1)}.$$
Applying Proposition~\ref{prop:perturb} with $\varepsilon=\frac{1}{n(2v_{n-1}-1)}$ and $L'=1/n+\varepsilon$ shows that
$$\ML(v_1, \ldots, v_n) \geq \frac{1}{n},$$
as desired.
\end{proof}

If we assume that the Spectrum Conjecture holds for $n-1$ runners, then we can take $f_n(v)$ to be linear in $v$ rather than quadratic in $v$ (at the cost of a dependence on $n$).  This improvement comes from the assumption that non-tight sets of $n-1$ speeds have maximum loneliness uniformly bounded away from $1/n$.

\begin{lemma}
Fix some $n \geq 3$, and assume that the Spectrum Conjecture holds for $n-1$ runners.  Let $v_1< \cdots < v_{n-1}$ be positive integers with $$\ML(v_1, \ldots, v_{n-1})=L>\frac{1}{n}.$$
Then we have
$$ML(v_1, \ldots, v_n) \geq \frac{1}{n}$$
whenever 
$$v_n \geq (2n-1)v_{n-1}.$$
\end{lemma}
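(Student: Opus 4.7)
The plan is to extract a quantitative gap between $L$ and $1/n$ from the stronger inductive hypothesis (the Spectrum Conjecture for $n-1$ runners) and then feed this gap into Proposition~\ref{prop:perturb}. The key point is that the Spectrum Conjecture forbids $L$ from sitting arbitrarily close to $1/n$ from above, which is precisely what converts a quadratic bound on $v_n$ into a linear one.

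First, I would pin down the smallest possible value of $L$ that is strictly greater than $1/n$. By the Spectrum Conjecture for $n-1$ runners, $L$ must either take the form $s/((n-1)s+1)$ for some $s \in \mathbb{N}$ or else satisfy $L \geq 1/(n-1)$. Since $s=1$ produces exactly $1/n$, the hypothesis $L > 1/n$ forces $s \geq 2$ in the first alternative. Rewriting $s/((n-1)s+1) = 1/((n-1) + 1/s)$ makes clear that this quantity is strictly increasing in $s$ and bounded above by $1/(n-1)$, so both alternatives yield the uniform bound
$$L \geq \frac{2}{2n-1}.$$

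Second, I would turn this bound into the gap $L - 1/n \geq 2/(2n-1) - 1/n = 1/(n(2n-1))$. Setting $\varepsilon = 1/(n(2n-1))$ and taking the lower bound $L' = 1/n + \varepsilon$ for $\ML(v_1, \ldots, v_{n-1})$, Proposition~\ref{prop:perturb} with these values yields $\ML(v_1, \ldots, v_n) \geq L' - \varepsilon = 1/n$ provided
$$v_n \geq \frac{L' - \varepsilon}{\varepsilon} \, v_{n-1} = \frac{1/n}{1/(n(2n-1))} \, v_{n-1} = (2n-1) \, v_{n-1},$$
which is exactly the hypothesized threshold.

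There is essentially no hard step here: once one recognizes that the Spectrum Conjecture forces a gap of size $1/(n(2n-1))$ between $L$ and $1/n$, the argument is a one-line invocation of Proposition~\ref{prop:perturb}. If any step deserves care, it is just verifying that the minimum of $\{s/((n-1)s+1) : s \geq 2\} \cup \{1/(n-1)\}$ is indeed $2/(2n-1)$, which follows from the monotonicity noted above together with the easy inequality $2/(2n-1) \leq 1/(n-1)$.
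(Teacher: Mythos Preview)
Your proposal is correct and follows essentially the same approach as the paper: deduce $L \geq 2/(2n-1)$ from the Spectrum Conjecture for $n-1$ runners, set $\varepsilon = 1/(n(2n-1))$, and invoke Proposition~\ref{prop:perturb}. In fact you spell out in more detail than the paper why the Spectrum Conjecture forces $L \geq 2/(2n-1)$ (via the monotonicity of $s/((n-1)s+1)$ and the comparison with $1/(n-1)$), which the paper simply states ``by assumption.''
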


\begin{proof}
By assumption, we have
$$L \geq \frac{2}{2n-1}.$$
Then applying Proposition~\ref{prop:perturb} with
$$\varepsilon=\frac{2}{2n-1}-\frac{1}{n}=\frac{1}{n(2n-1)}$$
and $L'=2/(2n-1)$ gives the desired result.
\end{proof}

In the next section, we turn our attention to the case $\ML(v_1, \ldots, v_{n-1}) \leq 1/n$.

\section{Accumulation points}\label{sec:accumulation}
We temporarily take a step back and consider the big picture.  For each positive integer $n$, define the set
$$\mathcal{S}(n)=\{\ML(v_1, \ldots, v_n): v_1, \ldots, v_n \text{ positive integers}\}$$
to consist of all maximum loneliness amounts achieved by sets of $n$ runners.  It is immediate that $\mathcal{S}(n) \subset \mathcal{S}(n+1)$ and that $\mathcal{S}(n) \subset (0, 1/2]$.  The Lonely Runner Conjecture asserts that $\mathcal{S}(n) \subset [1/(n+1),  1/2]$; the Loneliness Spectrum Conjecture, its refinement, asserts that $\mathcal{S}(n)$ is the union of $\{s/(ns+1): s \in \mathbb{N} \}$ and a subset of $[1/n, 1/2]$.

We define a real number $A$ to be an \emph{accumulation point} for $n$ runners if the set $\mathcal{S}(n)$ contains elements that are arbitrarily close to $A$.  On a more fine-grained view, we define $A$ to be a \emph{lower accumulation point} for $n$ runners if $\mathcal{S}(n)$ contains a sequence of elements approaching $A$ from below; we define $A$ to be an \emph{upper accumulation point} for $n$ runners if $\mathcal{S}(n)$ contains a sequence of elements approaching $A$ from above.  For instance, Theorem~\ref{thm:spectrum-construction} shows that $1/n$ is a lower accumulation point for $n$ runners.  We present the following pair of questions.
\begin{question}\label{question:lower-accumulation}
For $n \geq 2$, is the set of lower accumulation points for $n$ runners always precisely $\mathcal{S}(n-1)$?
\end{question}

\begin{question}\label{conj:upper-accumulation}
For $n \geq 2$, are there any upper accumulation points for $n$ runners, or are all accumulation points only lower accumulation points?
\end{question}

We make progress towards answering these questions, especially Question~\ref{question:lower-accumulation}.  For instance, the classification in Theorem~\ref{thm:2-runners} immediately answers both questions for $n=2$.
\begin{theorem}
The set of lower accumulation points for $2$ runners is precisely $\mathcal{S}(1)=\{1/2\}$.  There are no upper accumulation points for $2$ runners.
\end{theorem}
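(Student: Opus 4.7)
The plan is to read everything off the explicit classification in Theorem~\ref{thm:2-runners}. That theorem tells us exactly which values appear in $\mathcal{S}(2)$: for coprime speeds $v_1,v_2$ (and we may always reduce to the coprime case by Section~\ref{sec:tools}), $\ML(v_1,v_2)=1/2$ when both are odd, and otherwise $\ML(v_1,v_2)=s/(2s+1)$ where $2s+1=v_1+v_2$. To see that every such value is actually attained, note that $v_1=1,v_2=2s$ is coprime with sum $2s+1$, giving $\ML(1,2s)=s/(2s+1)$, while $v_1=1,v_2=3$ gives $\ML=1/2$. Thus
\[
\mathcal{S}(2)=\left\{\frac{s}{2s+1}:s\in\mathbb{N}\right\}\cup\left\{\tfrac{1}{2}\right\}.
\]

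Next I would pin down $\mathcal{S}(1)$. For a single positive integer speed $v$, choosing $t=1/(2v)$ yields $\|tv\|=1/2$, which is the largest possible value of $\|\cdot\|$; hence $\ML(v)=1/2$ for every $v$, so $\mathcal{S}(1)=\{1/2\}$.

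The third step is a direct analysis of limit points of the set displayed above. Writing $s/(2s+1)=1/(2+1/s)$ shows this is a strictly increasing sequence with $s/(2s+1)<1/2$ for every $s$ and $s/(2s+1)\to 1/2$. Since the terms are pairwise distinct and the only additional element of $\mathcal{S}(2)$ is $1/2$ itself, the only accumulation point of $\mathcal{S}(2)$ is $1/2$. Because every accumulating sequence lies in $\{s/(2s+1)\}$ and every such term is strictly less than $1/2$, this is a lower accumulation point. Finally, $1/2=\max\mathcal{S}(2)$, so no sequence in $\mathcal{S}(2)$ can approach $1/2$ from above, and $1/2$ is not an upper accumulation point; since it was the only accumulation point of any kind, there are no upper accumulation points at all. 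This gives exactly the two assertions of the theorem, with the set of lower accumulation points equal to $\{1/2\}=\mathcal{S}(1)$.

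There is essentially no obstacle here: the content of the theorem is already packaged inside Theorem~\ref{thm:2-runners}, and the proof is just the observation that the allowed values form the monotone sequence $1/3,2/5,3/7,\ldots$ together with its sup $1/2$. The only small care required is verifying that each $s/(2s+1)$ is genuinely realized (so they are not spurious limits) and that $1/2$ is the maximum of $\mathcal{S}(2)$ (so it cannot be approached from above).
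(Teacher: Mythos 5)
Your proposal is correct and is exactly the paper's route: the paper states this theorem as an immediate consequence of the classification in Theorem~\ref{thm:2-runners}, and you simply fill in the routine details (realizing each value $s/(2s+1)$ via the speeds $1,2s$, noting $\mathcal{S}(1)=\{1/2\}$, and observing that a strictly increasing sequence converging to its supremum $1/2$ has no other limit points and cannot be approached from above).
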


We now establish a refinement of Proposition~\ref{prop:perturb} in the case where we have additional information about the factors of $v_n$.

\begin{lemma}\label{lem:divisible}
Let $v_1<\cdots< v_{n-1}$ be positive integers ($n \geq 2$) with $$\ML(v_1, \ldots, v_{n-1})=L.$$
Let $t_1, \ldots, t_r$ be the times in the interval $[0,1)$ at which $\Abs{tv_i} \geq L$ for all $1 \leq i \leq n-1$.  For each $t_j$, let $\rho_j$ be the largest index such that $t_jv_{\rho_j}$ has remainder $L$ modulo $1$, and let $\lambda_j$ be the largest index such that $t_jv_{\lambda_j}$ has remainder $1-L$ modulo $1$.  Finally, define
$$\mu=\min_{1 \leq j \leq r} \{\rho_j, \lambda_j \},$$
where the minimum runs over all values of $j$.  Then
$$\ML(v_1,\ldots, v_n)=\frac{v_nL}{v_n+v_{\mu}}$$
whenever $v_n$ is a sufficiently large integer multiple of the lcm of the denominators of the $t_j$'s.
\end{lemma}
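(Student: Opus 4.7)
The plan is to establish matching lower and upper bounds, both resting on a single local picture near the global-maximum times $t_1, \ldots, t_r$ of the first $n-1$ runners. The critical observation is that since $v_n$ is an integer multiple of the lcm $D$ of the denominators of the $t_j$'s, each product $t_j v_n$ is an integer, so the fast runner sits exactly at the origin whenever the first $n-1$ runners collectively achieve loneliness $L$. Thus, the behavior of the full system near each $t_j$ reduces to a small linear perturbation problem in which the fast runner departs the origin linearly while the old runners' distances to the origin drift at their respective speeds.

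For the lower bound, I would pick $j^*$ and a sign realizing the minimum defining $\mu$; say for concreteness $\mu = \lambda_{j^*}$, and consider $t = t_{j^*} + \epsilon$ for small $\epsilon > 0$. For each $i \leq n-1$ with $\Vert t_{j^*} v_i \Vert > L$ strictly, $\Vert t v_i \Vert$ remains close to its (already larger than $L$) value; for $i$ with remainder $L$, the distance rises to $L + \epsilon v_i$; for $i$ with remainder $1-L$, the distance drops to $L - \epsilon v_i$, the tightest such constraint being $L - \epsilon v_\mu$. The fast runner contributes $\Vert \epsilon v_n \Vert = \epsilon v_n$ on the initial branch. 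Balancing $L - \epsilon v_\mu = \epsilon v_n$ yields $\epsilon = L/(v_n + v_\mu)$ and loneliness $v_n L/(v_n + v_\mu)$, provided $v_n$ is large enough that the finitely many non-binding runners remain above this threshold.

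For the upper bound, I would first argue via a gap/compactness step: since $f(t) = \min_{i \leq n-1} \Vert t v_i \Vert$ is piecewise linear on $[0,1]$ with maximum $L$ attained only on the finite set $\{t_1, \ldots, t_r\}$, there exist $\gamma > 0$ and a neighborhood $U$ of $\{t_j\}$ with $f(t) \leq L - \gamma$ off of $U$. Once $v_n$ is large enough that $v_n L/(v_n + v_\mu) > L - \gamma$, any candidate $t^*$ exceeding the claimed value must lie in $U$. Writing $t^* = t_j + \epsilon$ and using $f(t^*) \leq L - |\epsilon| v_{\lambda_j}$ for $\epsilon > 0$ (respectively, $L - |\epsilon| v_{\rho_j}$ for $\epsilon < 0$), together with the fast runner's contribution $\Vert \epsilon v_n \Vert$, sandwiches the loneliness at $t^*$. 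A direct calculation confirming that the initial branch of $\Vert \epsilon v_n \Vert$ dominates --- the subsequent branches, arising when $|\epsilon| v_n$ crosses half-integers, always yield strictly smaller optima --- pinpoints the maximum as $v_n L/(v_n + v_{\lambda_j})$ or $v_n L/(v_n + v_{\rho_j})$, which is at most $v_n L/(v_n + v_\mu)$ by minimality of $\mu$.

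The hard part will be the bookkeeping for the ``sufficiently large'' condition: quantifying the dependence of $v_n$ on $L$, on the gap $\gamma$, and on $v_{n-1}$; verifying uniformly that non-binding runners stay above the threshold in the lower-bound construction; handling the edge case $L = 1/2$, in which the two remainder classes coincide and $\rho_j = \lambda_j$; and checking that no later branch of $\Vert \epsilon v_n \Vert$ (when $|\epsilon| v_n$ lies beyond $1/2$) yields a better loneliness than the initial branch. None of these is conceptually deep, but each requires careful case analysis to state cleanly.
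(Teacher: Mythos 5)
Your proposal is correct and follows essentially the same route as the paper's proof: the divisibility of $v_n$ pins the fast runner at the origin at each equality time $t_j$, a reduction to small neighborhoods of the $t_j$'s (your compactness gap $\gamma$ plays the role of the paper's explicit slope bound via the piecewise-linear loneliness function), and a local linear balance giving $Lv_n/(v_n+v_{\lambda_j})$ or $Lv_n/(v_n+v_{\rho_j})$ on each side, maximized by minimizing the binding speed, i.e.\ at $v_\mu$. The bookkeeping you defer (small enough neighborhoods so the linear formulas are valid, later branches of $\Vert \epsilon v_n\Vert$ being dominated, the $L=1/2$ degeneracy) is exactly the routine part the paper also treats briefly, so there is no substantive gap.
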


A few remarks are in order before we proceed to the proof.
\begin{itemize}
\item First, the fact that there are only finitely many ``equality times'' $t_j$ in $[0,1)$ is assured by Proposition~\ref{prop:times-to-check}.  It would also  suffice to consider $t_j$'s only in the interval $[0, 1/2]$.
\item Second, let us re-state in words what is expressed in symbols above: at each equality time $t_j$, we identify the fastest runner whose position in $\mathbb{R}/\mathbb{Z}$ is $L$ and the fastest runner whose position is $1-L$, then we let $v_{\mu}$ be the slowest of of these runners, across all the $t_j$'s.
\item Third, the statement of the theorem requires $v_n$ to be sufficiently large.  Carefully carrying all of the bounds through the proof shows that $$v_n>4v_{n-1}^3v_1$$
is sufficient; we make no attempts to optimize this quantity.
\end{itemize}

\begin{proof}
The divisibility condition on $v_n$ ensures that at every ``equality time'' $t_j$, we have $\Abs{t_jv_n}=0$, whence we conclude that
$$\ML(v_1, \ldots, v_n)<L$$
strictly.  We now record a few properties of the real function $$e(t)=\max_{1 \leq i \leq n-1} \Abs{tv_i}.$$
Note that $e(t)$ is continuous and piecewise linear and the slope of each linear segment has absolute value between $v_1$ and $v_{n-1}$.  Moreover, $e(t)$ has a local minimum at $t_0$ if and only if $e(t_0)=0$.

Fix some sufficiently small $\varepsilon>0$ (say, $\varepsilon \leq 1/(4v_{n-1}^3)$).  The above observations tell us that if $e(t) \geq L-\varepsilon$, then $t$ is a distance at most $\varepsilon/v_1$ from some $t'$ with $e(t')=L$.  In particular, if we want to identify all possible times $t$ (modulo $1$) with $e(t) \geq L-\varepsilon$, then it suffices to examine the closed $\varepsilon/v_1$-neighborhoods of the $t_j$'s.  Moreover, since $\varepsilon$ is sufficiently small, we have
$$e(t)=\Abs{tv_{\rho_j}} \quad \text{for all }t_j-\frac{\varepsilon}{v_1} \leq t \leq t_j$$
and
$$e(t)=\Abs{tv_{\lambda_j}} \quad \text{for all }t_j \leq t \leq t_j+\frac{\varepsilon}{v_1}.$$
This charcterization allows us to compute the maximum loneliness explicitly when we include the runner with speed $v_n$.

Fix some $t_j$, and recall that $\Abs{t_jv_n}=0$.  Write $t=t_j+\delta$.  As $\delta$ is increased from $0$, the quantity $\Abs{tv_n}$ increases from $0$ at a rate of $v_n$ and the quantity $\Abs{tv_{\lambda_j}}$ decreases from $L$ at a rate of $v_{\lambda_j}$.  We obtain equality $\Abs{tv_n}=\Abs{tv_{\lambda_j}}$ when $\delta=\delta_0=L/(v_n+v_{\lambda_j})$, at which point
$$\Abs{tv_n}=\Abs{tv_{\lambda_j}}=\frac{Lv_n}{v_n+v_{\lambda_j}}.$$
By choosing $v_n$ sufficiently large (say, $v_n\geq (Lv_1)/\varepsilon$), we guarantee that $\delta_0 \leq \varepsilon/v_1$.  Moreover, since the quantity $\Abs{tv_{\lambda_j}}$ is monotonically decreasing as $\delta$ increases, we conclude that $$\frac{Lv_n}{v_n+v_{\lambda_j}}$$
is the largest loneliness amount achieved for the speeds $v_1, \ldots, v_n$ for times $t_j \leq t \leq t_j+\varepsilon/v_1$.

The same reasoning shows that the largest loneliness amount achieved for times $t_j-\varepsilon/v_1 \leq t \leq t_j$ is
$$\frac{Lv_n}{v_n+v_{\rho_j}}.$$
Taking the maximum of all such quantities over $j$ gives that
$$\ML(v_1, \ldots, v_n)=\frac{v_nL}{v_n+v_{\mu}},$$
as desired.
\end{proof}

This lemma allows us to deduce that every element of $\mathcal{S}(n-1)$ is a lower accumulation point for $n$ runners.

\begin{theorem}\label{thm:lower-accumulation}
For every $n \geq 2$, the set of lower accumulation points for $n$ runners contains $\mathcal{S}(n-1)$.
\end{theorem}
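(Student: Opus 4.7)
The plan is to read the theorem as an almost immediate corollary of Lemma~\ref{lem:divisible}. Fix $L \in \mathcal{S}(n-1)$, and choose any positive integers $v_1 < \cdots < v_{n-1}$ with $\ML(v_1, \ldots, v_{n-1}) = L$; such speeds exist by the definition of $\mathcal{S}(n-1)$, and Proposition~\ref{prop:times-to-check} guarantees that the set of equality times $t_1, \ldots, t_r \in [0,1)$ is finite, so in particular the lcm $d$ of their denominators is a well-defined positive integer. Let $v_\mu$ denote the distinguished (fixed) positive integer speed produced by Lemma~\ref{lem:divisible} applied to this configuration.

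The main step is then to run $v_n$ through arbitrarily large integer multiples of $d$. For each such $v_n$, chosen large enough that the hypotheses of Lemma~\ref{lem:divisible} are met (for instance, $v_n > 4 v_{n-1}^3 v_1$), the lemma yields
$$\ML(v_1, \ldots, v_n) = \frac{v_n L}{v_n + v_\mu}.$$
Since $v_\mu$ is a positive integer independent of $v_n$, the right-hand side is strictly less than $L$, strictly increasing in $v_n$, and satisfies
$$\lim_{v_n \to \infty} \frac{v_n L}{v_n + v_\mu} = L.$$
Hence as $v_n$ ranges over all sufficiently large multiples of $d$, we obtain a sequence of elements of $\mathcal{S}(n)$ converging to $L$ strictly from below, which is exactly the definition of $L$ being a lower accumulation point for $n$ runners. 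Since $L \in \mathcal{S}(n-1)$ was arbitrary, this proves $\mathcal{S}(n-1)$ is contained in the set of lower accumulation points for $n$ runners.

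The entire difficulty has been absorbed into Lemma~\ref{lem:divisible}, so after that input the argument is purely formal. The only mild point to verify is that \emph{infinitely many} multiples of $d$ satisfy the largeness hypothesis of the lemma, which is obvious since any $v_n = kd$ with $k$ above an explicit threshold works; this ensures we genuinely get a sequence, rather than just a single value, approaching $L$.
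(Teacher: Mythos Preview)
Your proof is correct and follows essentially the same approach as the paper's: both fix $L\in\mathcal{S}(n-1)$, choose witnessing speeds, and invoke Lemma~\ref{lem:divisible} along an increasing sequence of multiples of the lcm of the equality-time denominators to produce values of $\ML(v_1,\ldots,v_n)$ converging to $L$ from below. Your write-up is simply a more explicit unpacking of the paper's two-sentence argument.
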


\begin{proof}
Fix some $L \in \mathcal{S}(n-1)$.  Then there exist positive integers $v_1, \ldots, v_{n-1}$ such that $\ML(v_1, \ldots, v_{n-1})=L$.  By Lemma~\ref{lem:divisible}, there is an increasing sequence of values for $v_n$ such that the quantity $\ML(v_1, \ldots, v_n)$ approaches $L$ from below.
\end{proof}

In the absence of any straightforward constructions for upper accumulation points or other lower accumulation points, it is natural to suspect that all accumulation points for $n$ runners arise through the ``mechanism'' of Theorem~\ref{thm:lower-accumulation}.

An immediate corollary is that Loneliness Spectrum Conjecture (as well as its weakened version, Conjecture~\ref{conj:1-fast}) for $n$ runners implies the Lonely Runner Conjecture for $n-1$ runners.  We contrast this implication with the situation for the Lonely Runner Conjecture alone, where (to our knowledge), the statement for $n$ runners does not directly imply the statement for $n-1$ runners.  We record this observation in the following corollary.

\begin{corollary}\label{cor:implications}
Fix some. $n \geq 2$.  The Loneliness Spectrum Conjecture for $n$ runners implies Conjecture~\ref{conj:1-fast} for $n$ runners, which in turn implies the Lonely Runner Conjecture for $n-1$ runners.
\end{corollary}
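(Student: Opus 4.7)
The first implication is essentially tautological. The Loneliness Spectrum Conjecture for $n$ runners asserts the dichotomy---that $\ML(v_1, \ldots, v_n)$ lies in $\{s/(ns+1) : s \in \mathbb{N}\}$ or is at least $1/n$---for \emph{all} choices of $v_1, \ldots, v_n$, whereas Conjecture~\ref{conj:1-fast} asserts the same dichotomy only under the extra hypothesis $v_n > f_n(v_{n-1})$. So the first implication is immediate from any choice of $f_n$, e.g., $f_n \equiv 0$.

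For the second implication, my plan is to combine Conjecture~\ref{conj:1-fast} with the explicit formula of Lemma~\ref{lem:divisible} to preclude any hypothetical counterexample to the Lonely Runner Conjecture for $n-1$ runners. Fix positive integers $v_1, \ldots, v_{n-1}$ (which we may take to be coprime), and let $L = \ML(v_1, \ldots, v_{n-1})$; the goal is to show $L \geq 1/n$. Lemma~\ref{lem:divisible} furnishes an index $\mu \in \{1, \ldots, n-1\}$ and a modulus $D$ (the lcm of the denominators of the equality times for the $(n-1)$-runner system) with the property that for every sufficiently large integer multiple $v_n$ of $D$,
$$\ML(v_1, \ldots, v_{n-1}, v_n) = \frac{v_n L}{v_n + v_\mu}.$$
As $v_n \to \infty$ along this progression, the right-hand side is a strictly increasing sequence tending to $L$ from below.

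The key step is then to apply Conjecture~\ref{conj:1-fast} to these $n$-speed systems. For $v_n$ large enough to satisfy both the largeness hypothesis of Lemma~\ref{lem:divisible} and the hypothesis $v_n > f_n(v_{n-1})$, the value $v_n L/(v_n+v_\mu)$ must either equal some $s/(ns+1)$ or be at least $1/n$. Suppose for contradiction that $L < 1/n$; then eventually $v_n L/(v_n+v_\mu) < 1/n$, so all sufficiently late terms must lie in the discrete spectrum $\{s/(ns+1) : s \in \mathbb{N}\}$. But this spectrum is a strictly increasing sequence converging to $1/n$, with no other accumulation point, so any strictly increasing sequence of distinct elements of it must have limit $1/n$---forcing $L = 1/n$ and contradicting $L < 1/n$. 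Hence $L \geq 1/n$, which is precisely the Lonely Runner Conjecture for $n-1$ runners. The main obstacle is conceptual rather than technical: one has to recognize that the rigidity of the discrete spectrum---specifically, the absence of accumulation points strictly below $1/n$---is exactly what converts the ``local'' rigidity statement of Conjecture~\ref{conj:1-fast} (which constrains only systems with one very fast runner) into the ``global'' inequality of the Lonely Runner Conjecture for $n-1$ runners.
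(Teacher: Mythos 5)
Your argument is correct and follows essentially the same route as the paper: the paper treats the corollary as immediate from Theorem~\ref{thm:lower-accumulation} (every $L\in\mathcal{S}(n-1)$ is a lower accumulation point of $\mathcal{S}(n)$, proved via Lemma~\ref{lem:divisible}), combined with the observation that the discrete spectrum $\{s/(ns+1)\}$ accumulates only at $1/n$, which is exactly the mechanism you spell out by invoking Lemma~\ref{lem:divisible} directly. The first implication is, as you say, tautological since Conjecture~\ref{conj:1-fast} is just the restricted version of the Spectrum Conjecture.
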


\section{Re-formulating Conjecture~\ref{conj:1-fast}}\label{sec:reformulating}

We now present a generalization of Lemma~\ref{lem:divisible} to the scenario in which $v_n$ has a fixed residue with respect to certain moduli; the idea is substantively the same as the idea of Lemma~\ref{lem:divisible}, but the result is messier to state, so we introduce notation and an informal description before giving the precise statement and proof sketch.

As in Lemma~\ref{lem:divisible}, we fix positive integers $v_1, \ldots, v_{n-1}$ with $$\ML(v_1, \ldots, v_{n-1})=L,$$
and we let $t_1, \ldots, t_r$ be the ``equality times'' in $[0,1)$, i.e., the times for which $\Abs{tv_i} \geq L$ for all $1 \leq i \leq n-1$.  Let $D$ be the least common multiple of the denominators appearing in the reduced-fraction representations of the $t_j$'s.  Fix some integer $$-\frac{D}{2}<Q \leq \frac{D}{2}$$ such that $\Abs{t_j Q}<L$ for all $1 \leq j \leq r$.  (Call such a value of $Q$ \emph{admissible}.  We will look at values of $v_n$ that are equivalent to $Q$ modulo $D$.  Lemma~\ref{lem:divisible} is the special case $Q=0$.)  For each $j$, let $u_j$ denote the real number in $(-1/2, 1/2]$ that is equivalent to $t_jQ$ modulo $1$.  As in Lemma~\ref{lem:divisible}, for each $t_j$ we let $\rho_j$ be the largest index such that $t_jv_{\rho_j}$ has remainder $L$ modulo $1$, and we let $\lambda_j$ be the largest index such that $t_jv_{\lambda_j}$ has remainder $1-L$ modulo $1$.  Now, instead of taking a minimum over the $\rho_j$'s and $\lambda_j$'s, we take the following ``weighted minimum'': let $\mu$ be the $\rho_j$ or $\lambda_j$ that minimizes the quantity
$$v_{\rho_j}(L-u_j) \quad \text{or} \quad v_{\lambda_j}(L+u_j),$$
respectively.  We break ties between $\rho_j$'s in favor of larger $u_j$ (and arbitrarily beyond that point), and we break ties between $\lambda_j$'s in favor of smaller $u_j$ (and arbitrarily beyond that point).  We break ties between $\rho_{j'}$ and $\lambda_{j''}$ in favor of larger $u_j$ (and arbitrarily beyond that point).  So, keeping track of which $j$ our $\mu$ ``came from'' and whether it came from a $\rho$ or from a $\lambda$, we write either $\mu=\rho_k$ or $\mu=\lambda_k$.  We can finally state the lemma.

\begin{lemma}\label{lem:divisible-2}
Let $v_1, \ldots, v_{n-1}$ be positive integers ($n \geq 2$) with $$\ML(v_1, \ldots, v_{n-1})=L.$$
Define $t_1, \ldots, t_r$ and $D$ as above, and fix some admissible integer $$-\frac{D}{2}<Q \leq \frac{D}{2}.$$
Now define the $u_i$'s, $\rho_j$'s, and $\lambda_j$'s as above, along with the resulting $\mu=\rho_k$ or $\mu=\lambda{_k}$.  Then
$$
\ML(v_1, \ldots, v_n)=
\begin{cases}
L-\frac{v_{\rho_k}}{v_{\rho_k}+v_n}(L-u_k), &\text{if } \mu=\rho_k\\[10pt]
L-\frac{v_{\lambda_k}}{v_{\lambda_k}+v_n}(L+u_k), &\text{if } \mu=\lambda_k
\end{cases}
$$
whenever $v_n$ is a sufficiently large integer that is equivalent to $Q$ modulo $D$.
\end{lemma}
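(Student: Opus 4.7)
The plan is to extend the proof of Lemma~\ref{lem:divisible} to a general offset $u_j$, carefully tracking where the fast runner sits relative to the slow bottleneck on either side of each equality time. First, the hypothesis $v_n \equiv Q \pmod{D}$ together with $Dt_j \in \mathbb{Z}$ forces $t_j v_n \equiv t_j Q \equiv u_j \pmod{1}$, so $\Abs{t_j v_n} = |u_j|$; admissibility of $Q$ guarantees $|u_j| < L$, and hence the loneliness at each $t_j$ drops strictly below $L$ once the fast runner is included. As in Lemma~\ref{lem:divisible}, for a sufficiently small $\varepsilon > 0$ any candidate time $t$ with total loneliness at least $L - \varepsilon$ must lie in the $(\varepsilon/v_1)$-neighborhood of some $t_j$. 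Writing $t = t_j + \delta$, the contribution from the first $n-1$ runners is piecewise linear: $L - \delta v_{\lambda_j}$ for $\delta > 0$ (the bottleneck being the fastest runner at position $-L$) and $L - |\delta| v_{\rho_j}$ for $\delta < 0$ (the bottleneck at position $L$).

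Next I would analyze the sawtooth function $\Abs{(t_j + \delta) v_n}$, which begins at $|u_j|$ and oscillates with period $1/v_n$ between $0$ and $1/2$. In each direction of $\delta$, the maximum of the minimum with the slow bottleneck is attained at a balance point on one of the sawtooth's rising segments. A short case split on the sign of $u_j$, to determine whether the sawtooth rises immediately or must first descend through $0$, shows that in either subcase the balance equation reduces to a single identity whose solution yields loneliness of the form $L - \frac{v_\mu}{v_n + v_\mu}(L \pm u_j)$, with the appropriate sign and choice of $\mu \in \{\rho_j, \lambda_j\}$ determined by the direction of perturbation. A separate check verifies that subsequent sawtooth peaks give strictly smaller loneliness, since by then the slow bottleneck has decreased further, so only the first accessible balance in each direction matters.

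To finish, I would maximize this balance value over all $t_j$ and both directions, which is equivalent to minimizing the drop quantity $v_{\rho_j}(L - u_j)$ or $v_{\lambda_j}(L + u_j)$ across the $2r$ candidates. The minimizer is exactly the weighted minimum described in the statement, and the stated tiebreak rules are the second-order discriminators that distinguish candidates agreeing to leading order in $1/v_n$, coming from the $(v_n + v_\mu)$ denominator corrections. Plugging the winning $\mu$ into the balance formula gives the two-case expression in the conclusion.

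The main obstacle is the careful case analysis for the balance computation: one must handle four subcases according to the sign of $u_j$ paired with the direction of $\delta$, verify that the ``sign-misaligned'' cases (in which the sawtooth first descends to $0$) produce the same closed form as the sign-aligned ones, and ensure that the balance point actually lies inside the $(\varepsilon/v_1)$-neighborhood of $t_j$. This last check is what forces the quantitative lower bound on $v_n$, analogous to the $v_n > 4 v_{n-1}^3 v_1$ threshold from Lemma~\ref{lem:divisible}. A secondary subtlety is the tiebreak bookkeeping: multiple pairs $(j, \rho_j$ or $\lambda_j)$ can yield numerically identical first-order drops, and only the $(v_n + v_\mu)^{-1}$ correction breaks these ties in the specific directions captured by the stated conventions.
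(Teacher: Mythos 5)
Your overall route is the same as the paper's: localize to the $\varepsilon/v_1$-neighborhoods of the equality times via the argument of Lemma~\ref{lem:divisible}, use the linear bottlenecks $\Abs{tv_{\rho_j}}$ (for $t<t_j$) and $\Abs{tv_{\lambda_j}}$ (for $t>t_j$), intersect them with the sawtooth of the fast runner started at offset $u_j$, and take the best of the $2r$ first balance points; your remarks about first crossings beating later sawtooth peaks, the crossing lying inside the window once $v_n$ is large, and ties being resolved by the $(v_n+v_\mu)^{-1}$ corrections all match the intended argument. The gap is in the one step the lemma is actually about, which you leave as ``the appropriate sign \ldots determined by the direction of perturbation.'' Carry out that balance computation with the definitions as given ($u_j$ is the representative of $t_jQ$, so the fast runner sits at signed position $u_j$ and all runners move in the positive direction): for $t>t_j$ the bottleneck is $\lambda_j$ and the fast runner must travel roughly $L-u_j$, giving the drop $\frac{v_{\lambda_j}}{v_{\lambda_j}+v_n}(L-u_j)$, while for $t<t_j$ the bottleneck is $\rho_j$ and the drop is $\frac{v_{\rho_j}}{v_{\rho_j}+v_n}(L+u_j)$. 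This is the opposite pairing from the one you then use in your final minimization step (minimizing $v_{\rho_j}(L-u_j)$ and $v_{\lambda_j}(L+u_j)$), so the middle and the end of your proposal contradict each other; it is the final step (and the displayed formula, which is correct only under the convention $u_j\equiv -t_jQ$) that needs the swap.

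A concrete check: take $v_1=1$, $v_2=4$, so $L=2/5$, $t_1=2/5$, $t_2=3/5$, $D=5$; the residue $Q=2$ is admissible, with $u_1=-1/5$, $u_2=1/5$, $\rho_1=1$, $\lambda_1=2$, $\rho_2=2$, $\lambda_2=1$. For large $v_n\equiv 2\pmod 5$, the time $t=\frac{2}{5}-\frac{1}{5(v_n+1)}$ satisfies $\Abs{tv_1}=\Abs{tv_n}=\frac{2}{5}-\frac{1}{5(v_n+1)}$ and $\Abs{tv_2}>\frac{2}{5}$, so $\ML(1,4,v_n)\ge \frac{2}{5}-\frac{1}{5(v_n+1)}$; this agrees with the pairing $\rho\leftrightarrow(L+u)$, $\lambda\leftrightarrow(L-u)$ and strictly exceeds the value $\frac{2}{5}-\frac{3}{5(v_n+1)}$ produced by the pairing in your last step. (The same sign slip appears in the statement and in the paper's own sketch; it is invisible in the later applications because only $Q=0$ is admissible there.) So to complete the proof you must actually execute the four-way case split you allude to, record the resulting signs, and then either exchange $(L-u_k)\leftrightarrow(L+u_k)$ in the conclusion or redefine $u_j$ as the representative of $-t_jQ$ modulo $1$; with that reconciliation the rest of your outline goes through as the paper intends.
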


\begin{proof}[Proof (sketch).]
The choice of $Q$ guarantees that $\Abs{t_jv_n}=\Abs{t_jQ}<L$ for every $t_j$, which in turn implies that
$$\ML(v_1, \ldots, v_n)<L.$$
Moreover, note that at the time $t_j$, the runner with speed $v_n$ is at the position $u_j$ (which lies strictly between $-L$ and $L$).  The argument from the proof of Lemma~\ref{lem:divisible} shows that if $\Abs{tv_i} \geq L-\varepsilon$ for all $1 \leq i \leq n-1$, then $t$ is within $\varepsilon/v_1$ of some $t_j$, so we restrict our attention to these neighborhoods.  For times slightly larger than $t_j$, the greatest loneliness achieved by speeds $v_1, \ldots, v_n$ is precisely
$$L-\frac{v_{\lambda_j}}{v_{\lambda_j}+v_n}(L+u_j),$$
and for times slightly smaller than $t_j$, the greatest loneliness achieved is
$$L-\frac{v_{\rho_j}}{v_{\rho_j}+v_n}(L-u_j).$$
Taking a minimum over all such expressions (for sufficiently large $n$) gives the desired result.  (In other words, there is one expression that ``wins out'' for all sufficiently large $n$.)
\end{proof}

Unlike in Lemma~\ref{lem:divisible}, we do not compute an explicit characterization of $v_n$ ``sufficiently large''; the minimum in the last step of the proof complicates such a computation.  We can now give a necessary and sufficient condition to determine whether or not Conjecture~\ref{conj:1-fast} holds for speeds $v_1, \ldots, v_n$, where we fix $v_1, \ldots, v_{n-1}$.

\begin{theorem}\label{thm:condition}
Let $v_1<\cdots <v_{n-1}$ be positive integers ($n \geq 2$) with \linebreak $\ML(v_1, \ldots, v_{n-1})=L$.  Then the following are equivalent:
\begin{enumerate}[label=(\Roman*)]
\item For every sufficiently large integer $v_n$, we have either $$\ML(v_1, \ldots, v_n)=\frac{s}{ns+1} \quad \text{for some } s \in \mathbb{N} \quad \text{or} \quad \ML(v_1, \ldots, v_n) \geq \frac{1}{n}.$$
\item One of the following holds:
\begin{enumerate}
\item $L>1/n$.
\item $L=1/n$; and (in the notation of Lemma~\ref{lem:divisible-2}) for each admissible residue $Q$ we have, when $\mu=\rho_k$ (respectively,  $\mu=\lambda_k$), both the equality 
$$Q=-nv_{\rho_k}u_k \quad \text{(respectively, } Q=nv_{\lambda_k}u_k \text{)}$$
and the property that $$\frac{D}{nv_{\rho_k}(1-nu_k)} \quad \text{(respectively, } \frac{D}{nv_{\lambda_k}(1+nu_k)} \text{)}$$
is an integer.
\end{enumerate}
\end{enumerate}
\end{theorem}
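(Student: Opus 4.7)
The plan is to prove the equivalence (I) $\iff$ (II) by a case analysis on the size of $L = \ML(v_1, \ldots, v_{n-1})$ relative to $1/n$, combining Lemmas~\ref{lem:1-fast}, \ref{lem:divisible}, and \ref{lem:divisible-2}. The implication (II)(a) $\Rightarrow$ (I) is immediate: if $L > 1/n$, then Lemma~\ref{lem:1-fast} yields $\ML(v_1, \ldots, v_n) \geq 1/n$ whenever $v_n \geq v_{n-1}(2v_{n-1}-1)$. For the converse direction, I would first rule out $L < 1/n$: applying Lemma~\ref{lem:divisible} along the sequence of $v_n$ that are large multiples of $D$ (the least common multiple of the denominators of the equality times) produces the values $\ML(v_1, \ldots, v_n) = v_n L/(v_n + v_\mu)$, which lie strictly below $L < 1/n$ and tend to $L$ from below; but the equation $v_n L/(v_n + v_\mu) = s/(ns+1)$ is linear in $v_n$ for each fixed $s$ and so has at most one solution, meaning only finitely many such $v_n$ land in the discrete spectrum, contradicting (I). Hence (I) forces $L \geq 1/n$.

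The substance of the proof is the case $L = 1/n$. The first reduction is to dispatch non-admissible residues: if $Q$ is non-admissible then $|u_j| \geq L$ for some equality time $t_j$, so for every $v_n \equiv Q \pmod{D}$ one has $\|t_j v_n\| = |u_j| \geq L$, whence $\ML(v_1, \ldots, v_n) \geq 1/n$ and (I) holds trivially on that residue class. Thus (I) for all sufficiently large $v_n$ is equivalent to (I) for all sufficiently large $v_n$ in each admissible residue class.

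Now fix an admissible $Q$ and assume without loss of generality that $\mu = \rho_k$ in the notation of Lemma~\ref{lem:divisible-2} (the $\mu = \lambda_k$ case being symmetric under $u_k \mapsto -u_k$). Lemma~\ref{lem:divisible-2} gives, for $v_n$ sufficiently large in the class $Q \bmod D$, the explicit formula
$$\ML(v_1, \ldots, v_n) = \frac{1}{n} - \frac{v_{\rho_k}(1 - nu_k)}{n(v_{\rho_k} + v_n)},$$
which is strictly less than $1/n$, so (I) demands $\ML(v_1, \ldots, v_n) = s/(ns+1)$ for some positive integer $s$. Equating with $s/(ns+1) = 1/n - 1/[n(ns+1)]$ and solving for $s$ yields
$$s = \frac{v_n + nv_{\rho_k}u_k}{nv_{\rho_k}(1 - nu_k)}.$$
Writing $v_n = Q + kD$, this expression is an affine function of $k$ with slope $D/[nv_{\rho_k}(1-nu_k)]$ and intercept $(Q + nv_{\rho_k}u_k)/[nv_{\rho_k}(1-nu_k)]$. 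For $s$ to be a positive integer for all sufficiently large $k$, the slope must be a positive integer (this is the second bullet of (II)(b)), and the intercept must be an integer that in fact equals $0$ (which rearranges to $Q = -nv_{\rho_k}u_k$, the first bullet of (II)(b)). Running these implications in reverse yields (II)(b) $\Rightarrow$ (I), completing the equivalence.

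The main obstacle is the step that forces the intercept to vanish rather than merely be an integer. I plan to handle this via a size-versus-divisibility comparison: the numerator satisfies $|Q + nv_{\rho_k}u_k| < D/2 + v_{\rho_k}$ using the bounds $|Q| \leq D/2$ and $|u_k| < 1/n$, while the denominator $nv_{\rho_k}(1-nu_k)$ is constrained to divide $D$; playing these bounds against each other (and invoking the specific tie-breaking conventions built into the definition of $\mu$ in Lemma~\ref{lem:divisible-2}, which pin down \emph{which} $\rho_k$ or $\lambda_k$ is selected) should rule out nonzero integer quotients and close the argument.
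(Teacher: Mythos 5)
Your proposal follows the paper's own proof almost step for step: the same trichotomy on $L$ versus $1/n$ (with Lemma~\ref{lem:divisible} ruling out $L<1/n$ and the $L>1/n$ case dispatched by the one-fast-runner/perturbation lemmas), the same reduction to admissible residues $Q$, and the same algebra that extracts $s$ from the formula of Lemma~\ref{lem:divisible-2}. All of that is correct, so the assessment hinges on the step you yourself flag as the main obstacle.

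That step is a genuine gap as it stands. Your analysis correctly shows that, on an admissible class, (I) forces the slope $D/(nv_{\rho_k}(1-nu_k))$ to be a positive integer and the intercept $(Q+nv_{\rho_k}u_k)/(nv_{\rho_k}(1-nu_k))$ to be an integer; condition (II.b) asserts that this integer is $0$, and your size-versus-divisibility plan does not yet deliver that. The bounds you quote give $\lvert Q+nv_{\rho_k}u_k\rvert < D/2+v_{\rho_k}$, while the denominator is only known to be a divisor $D/c$ of $D$; for $c\geq 2$ (and even for $c=1$ unless one first shows $v_{\rho_k}\leq D/2$) nothing in these inequalities forbids a nonzero integer quotient, so the argument would have to invoke more structure -- e.g.\ the congruences $u_k\equiv t_kQ \pmod 1$ and $t_kv_{\rho_k}\equiv 1/n \pmod 1$, or the tie-breaking rule defining $\mu$ -- none of which your sketch actually uses. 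Closing this is not optional bookkeeping: if a nonzero integer intercept could occur, then (I) would hold on that residue class (take $s=cm+j$ for large $m$) while the displayed equality $Q=-nv_{\rho_k}u_k$ failed, so it is exactly this step that makes (II.b) necessary rather than merely sufficient. It is worth noting that the paper's proof passes over the same point by assertion, reading the requirement that the left-hand side be $\equiv 1 \pmod n$ directly as $(Q+v_{\rho_k})/(v_{\rho_k}(1-nu_k))=1$; so you have correctly isolated the one delicate step of the argument, but your proposal does not yet prove it.
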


\begin{proof}
It is clear from Lemma~\ref{lem:divisible-2} that if $L>1/n$, then $\ML(v_1, \ldots, v_n)>1/n$ for all sufficiently large $v_n$; the reverse implication follows from Lemma~\ref{lem:divisible}.  It is also clear from Lemma~\ref{lem:divisible} that if $L<1/n$, then (I) does not hold.  So it remains to consider the case where $L=1/n$.  For each integer $-D/2<Q \leq D/2$, we consider sufficiently large values of $n$ with (fixed) residue $Q$ modulo $D$.  If there is any $t_j$ with $\Abs{t_jQ} \geq 1/n$, then the speeds $v_1, \ldots, v_n$ achieve a loneliness of $1/n$ at that time, whence we conclude that $\ML(v_1, \ldots, v_n)=1/n$.  So, as in Lemma~\ref{lem:divisible-2} we restrict our attention to admissible values of $Q$.

Fix some such $Q$, and write $v_n=mD+Q$.  Suppose that in Lemma~\ref{lem:divisible-2}, we have $\mu=\rho_k$.  Then for sufficiently large $m$ (i.e., sufficiently large $v_n$), we have
$$\ML(v_1, \ldots, v_n)=\frac{1}{n}-\frac{v_{\rho_k}}{v_{\rho_k}+v_n} \left(\frac{1}{n}-u_k\right).$$
Suppose this quantity equals $s/(ns+1)$ for some $s \in \mathbb{N}$, i.e.,
$$\frac{v_{\rho_k}}{v_{\rho_k}+v_n} \left(\frac{1}{n}-u_k \right)=\frac{1}{n(ns+1)}.$$
Substituting for $v_n$ and rearranging gives
$$\frac{mD+Q+v_{\rho_k}}{v_{\rho_k}(1-nu_k)}=ns+1.$$
Each (sufficiently large) $m$ can have a corresponding $s$ only if incrementing $m$ increments the left-hand side by an integer multiple of $n$, i.e.,
$$\frac{D}{nv_{\rho_k}(1-nu_k)}$$
is an integer.
Moreover, we then see that we also require
$$\frac{Q+v_{\rho_k}}{v_{\rho_k}(1-nu_k)}=1,$$
or, equivalently,
$$Q=-nv_{\rho_k}u_k.$$
These are precisely the two conditions of (II.a).  The same argument for $\mu=\lambda_k$ gives the analogous pair of conditions in the statement of the lemma.  Finally, to achieve a uniform bound (over choices of $Q$) on how large $v_n$ must be, we simple take the maximum of the bounds obtained for the various $Q$'s.
\end{proof}

This theorem reduces Conjecture~\ref{conj:1-fast} for $n$ runners to an explicit computation once we know all of the tight speed sets for $n-1$ runners; moreover, this computation is finite if there are only finitely many tight sets of speeds (up to scaling).  Recall from Corollary~\ref{cor:implications} that if the Lonely Runner Conjecture does not hold for $n-1$ runners, then Conjecture~\ref{conj:1-fast} does not hold for $n$ runners.

\begin{theorem}\label{thm:reduction-to-computation}
Fix some $n \geq 4$.  Suppose the Lonely Runner Conjecture holds for $n-1$ runners and we know all of the tight speed sets for $n-1$ runners.  Then Conjecture~\ref{conj:1-fast} holds for $n$ runners if and only if every such tight set of speeds $v_1<\cdots <v_{n-1}$ with $\gcd(v_1, \ldots, v_{n-1})=1$ satisfies the conditions in (II.b) of Theorem~\ref{thm:condition} (which can be checked with an explicit finite computation).
\end{theorem}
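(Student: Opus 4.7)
The plan is to assemble Theorem~\ref{thm:condition}, Lemma~\ref{lem:1-fast}, and the finite classification of tight speed sets into a single reduction. The forward direction follows by contrapositive from Theorem~\ref{thm:condition}: if some $\gcd=1$ tight set $v_1<\cdots<v_{n-1}$ fails (II.b), then Theorem~\ref{thm:condition} provides arbitrarily large $v_n$ for which $\ML(v_1,\ldots,v_n)\notin\{s/(ns+1):s\in\mathbb{N}\}\cup[1/n,\infty)$, so no function $f_n(v_{n-1})$ can fulfill Conjecture~\ref{conj:1-fast}.

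For the reverse direction, I would split on $L=\ML(v_1,\ldots,v_{n-1})$, which satisfies $L\geq 1/n$ by the assumed Lonely Runner Conjecture for $n-1$ runners. When $L>1/n$, Lemma~\ref{lem:1-fast} yields the uniform threshold $v_n\geq v_{n-1}(2v_{n-1}-1)$ guaranteeing $\ML(v_1,\ldots,v_n)\geq 1/n$, independently of the specific speeds. When $L=1/n$, the tuple $(v_1,\ldots,v_{n-1})$ is a tight set, hence of the form $c\cdot(w_1,\ldots,w_{n-1})$ for some integer $c\geq 1$ and some $\gcd=1$ tight set $(w_1,\ldots,w_{n-1})$ drawn from the finite known list. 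For $c=1$, (II.b) holds by hypothesis and Theorem~\ref{thm:condition} applies directly, giving a bound $B_w$ beyond which the desired dichotomy holds. For $c>1$, I would verify that (II.b) transfers under scaling: the equality times of $(cw_1,\ldots,cw_{n-1})$ are precisely $(t_j+k)/c$ for $0\leq k<c$ with $t_j$ the base equality times; the indices $\rho_j,\lambda_j$ are preserved (since $((t_j+k)/c)\cdot cw_i$ differs from $t_jw_i$ by an integer); and both $D$ and the selected speed $v_\mu$ scale by $c$, so the integrality condition in (II.b) is invariant. Admissible residues $Q$ for the scaled system that are $c$-multiples of admissible base residues inherit (II.b), and any genuinely new admissible residues introduced by the refinement require an analogous direct check.

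Assembling these pieces, $f_n(v_{n-1})$ can be taken as the maximum of the quadratic threshold from Lemma~\ref{lem:1-fast} together with the bounds associated to each tight scaling family whose top speed is at most $v_{n-1}$, a finite collection because the base list is finite and the scaling factor satisfies $c\leq v_{n-1}/\min_w w_{n-1}$. The main obstacle is verifying the scaling invariance of (II.b) rigorously: the argument is conceptually clean but requires careful bookkeeping of denominators, admissible residues, and the tie-breaking conventions in Lemma~\ref{lem:divisible-2}. Once this step is settled, the remainder is a straightforward synthesis of previously established results, and the finiteness of the check in (II.b) for each $\gcd=1$ tight set is clear from Theorem~\ref{thm:condition}.
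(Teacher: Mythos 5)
Your overall architecture is right, and several pieces are sound: the forward direction via the contrapositive of Theorem~\ref{thm:condition}, the $L>1/n$ case (your use of Lemma~\ref{lem:1-fast} even gives an explicit uniform threshold, where the paper just invokes finiteness of speed sets per value of $v_{n-1}$), the gcd-one tight case, and the finiteness bookkeeping for $f_n$. But there is a genuine gap in the scaled tight case $c>1$, exactly at the point you flag and defer. Your plan is to transfer (II.b) from the base set $(w_1,\ldots,w_{n-1})$ to $(cw_1,\ldots,cw_{n-1})$ and then apply Theorem~\ref{thm:condition} to the scaled system; for that you need (II.b) to hold for \emph{every} admissible residue $Q'$ modulo $D'=cD$. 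Your computation correctly handles residues of the form $Q'=cQ$ with $Q$ admissible for the base set (indices, $u$-values, and the weighted minimum are unchanged, and both conditions in (II.b) are scale-invariant), but "any genuinely new admissible residues introduced by the refinement require an analogous direct check" is not an argument: the theorem's hypothesis gives no information whatsoever about such residues, so if even one existed and failed (II.b), your reverse implication would collapse. Deferring to a per-instance check does not close this, since you must \emph{prove} the dichotomy for all these instances from the gcd-one hypothesis alone.

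The missing ingredient is the paper's pre-jump/pigeonhole step: the equality times of the scaled set are $(t_j+h)/c$, and if $c\nmid Q'$ then, as $h$ varies, the positions $\Abs{\tfrac{t_j+h}{c}Q'}$ run over an arithmetic progression on the circle with spacing at most $1/2$, so some equality time $t'$ satisfies $\Abs{t'Q'}\geq \tfrac12-\tfrac1{2c}\geq \tfrac14\geq \tfrac1n$ (this is where $n\geq 4$ is used). Hence every admissible $Q'$ is a multiple of $c$, and your transferred (II.b) then covers all admissible residues; moreover, for inadmissible residues the dichotomy holds trivially with $\ML\geq 1/n$. Once this divisibility fact is in hand, the paper actually bypasses transferring (II.b) altogether: any sufficiently large $v'_n\equiv Q'\pmod{cD}$ is itself divisible by $c$, so $\ML(cw_1,\ldots,cw_{n-1},v'_n)=\ML(w_1,\ldots,w_{n-1},v'_n/c)$, and condition (I) for the gcd-one set finishes the argument directly. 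Your scaling-invariance computation is correct but becomes optional at that point; the essential and currently absent step is ruling out admissible residues not divisible by $c$.
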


\begin{proof}
First, recall that Conjecture~\ref{conj:1-fast} asks for a function $f_n:\mathbb{N} \to \mathbb{N}$ such that the set of speeds $\ML(v_1, \ldots, v_n)$ has certain properties whenever $v_n>f_n(v_{n-1})$; this is \emph{uniform} bound on $v_n$ in terms of $v_{n-1}$.  Since there are only finitely many sets of positive speeds $v_1<\cdots<v_{n-1}$ for each value of $v_{n-1}$, however, it suffices to consider ``sufficiently large'' $v_n$ for each set $v_1, \ldots, v_{n-1}$ separately and then take $f_n(v_{n-1})$ to be the maximum of the bounds obtained.

Suppose we have verified the conditions in (II.b) of Theorem~\ref{thm:condition} for every tight set of speeds $v_1<\cdots <v_{n-1}$ with $\gcd(v_1, \ldots, v_{n-1})=1$.  Then we claim that (I) is also satisfied for every set of tight speeds.  Indeed, let $v'_1<\cdots < v'_{n-1}$ be positive integers with $\ML(v'_1, \ldots, v'_{n-1})=1/n$.  Then let $g=\gcd(v'_1, \ldots, v'_{n-1})$, and write $v'_i=gv_i$ for $1 \leq i \leq n-1$, where we know that (II.b) is satisfied for the speeds $v_1, \ldots, v_{n-1}$.  If $g=1$, then we are done, so consider $g \geq 2$.  The equality times for $v'_1, \ldots, v'_{n-1}$ are precisely the times of the form $$\frac{t_j+h}{g},$$ where $t_j$ is an equality time for $v_1, \ldots, v_{n-1}$ and $h$ is an integer.  Thus, we have $D'=gD$, where $D'$ (respecticely, $D$) is the lcm of the equality times in $[0,1)$ for the speeds $v'_1, \ldots, v'_{n-1}$ (respectively, $v_1, \ldots, v_{n-1}$).  We now consider various admissible residues $Q'$ (modulo $D'$).  Each admissible $Q'$ must be a multiple of $g$: otherwise, we could add time increments of $1/g$ (pre-jump) to find an equality time $t'_j$ for $v'_1, \ldots, v'_{n-1}$ at which $$\Abs{t'_jQ'}\geq \frac{1}{2}-\frac{1}{2g} \geq \frac{1}{4} \geq \frac{1}{n}.$$
So $Q'$ is a multiple of $g$, and any $v'_n$ that is equivalent to $Q'$ modulo $D'$ can be written as $v'_n=gv_n$.  But then
$$\ML(v'_1, \ldots, v'_n)=\ML(v_1, \ldots, v_n),$$
and we know that the quantity on the right-hand side satisfies condition (I) of Theorem~\ref{thm:condition}.  So we conclude that it suffices to check tight instances for $n-1$ runners where the speeds do not all share a common factor.
\end{proof}

Another point of interest of this theorem is that it provides a potential way to refute the Spectrum Conjecture.

We now apply Theorem~\ref{thm:condition} to the tight set of speeds $1, \ldots, n-1$ and then use Theorem~\ref{thm:reduction-to-computation} to resolve Conjecture~\ref{conj:1-fast} for $4$ moving runners.  The computation is straightforward.

\begin{prop}\label{prop:computation-for-consecutive-speeds}
Consider the tight set of speeds $v_1=1, v_2=2, \ldots, v_{n-1}=n-1$ ($n \geq 4$).  This set of speeds satisfies the conditions of (II.b) in Theorem~\ref{thm:condition}.
\end{prop}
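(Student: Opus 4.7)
The plan is to verify condition (II.b) of Theorem~\ref{thm:condition} directly, using the known value $L = 1/n$ for the tight set $v_i = i$, $1 \leq i \leq n-1$. The strategy is to identify the equality times, then the admissible residues, and finally check the two arithmetic conditions.

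First I would pin down the equality times: I claim the set of $t \in [0,1)$ satisfying $\Vert tv_i \Vert \geq 1/n$ for all $1 \leq i \leq n-1$ is exactly $T = \{k/n : 1 \leq k \leq n-1,\ \gcd(k,n) = 1\}$. Given such a $t$, the $n$ points $0, t, 2t, \ldots, (n-1)t$ in $\mathbb{R}/\mathbb{Z}$ are forced to be pairwise at distance $\geq 1/n$ (else $\Vert (j-i)t \Vert < 1/n$ for some $0 \leq i < j \leq n-1$), and $n$ points on a circle of circumference $1$ that are pairwise separated by $\geq 1/n$ must be equally spaced, giving $t = k/n$ with $\gcd(k,n)=1$. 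The converse is immediate. Every equality time therefore has reduced denominator exactly $n$, so the modulus in Lemma~\ref{lem:divisible-2} is $D = n$.

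Next I would determine the admissible residues $Q \in (-n/2, n/2] \cap \mathbb{Z}$. Admissibility requires $\Vert kQ/n \Vert < 1/n$ for every $k$ coprime to $n$ in $\{1, \ldots, n-1\}$; already $k = 1$ forces $n \mid Q$, so the only candidate is $Q = 0$ (which is indeed admissible). With $Q = 0$, every $u_j$ is also zero.

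Finally I would verify the conditions of (II.b) at $Q = 0$. At each equality time $t_j = k/n$, the index $\rho_j$ is the unique $i \in \{1, \ldots, n-1\}$ with $ki \equiv 1 \pmod{n}$, namely $\rho_j = k^{-1} \bmod n$; similarly $\lambda_j = (n - k^{-1}) \bmod n$. Since inversion is a bijection of $(\mathbb{Z}/n\mathbb{Z})^{*}$, the set $\{v_{\rho_j}\} \cup \{v_{\lambda_j}\}$ consists of every integer in $\{1, \ldots, n-1\}$ coprime to $n$, and in particular contains $v_1 = 1$. Taking $\mu = \rho_{k_0}$ with $t_{k_0} = 1/n$ gives $v_{\rho_{k_0}} = 1$ and $u_{k_0} = 0$, so the first condition of (II.b) reads $0 = -n \cdot 1 \cdot 0$ (trivially true) and the second reads $D/(n v_{\rho_{k_0}}(1 - n u_{k_0})) = n/n = 1 \in \mathbb{Z}$. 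Both hold, completing the verification. The only non-trivial step is the pigeonhole characterization of the equality times; once that is in hand, the rest reduces to routine modular arithmetic, and the tie-breaking rules of Lemma~\ref{lem:divisible-2} play no role because every $u_j$ vanishes.
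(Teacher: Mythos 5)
Your proof is correct and follows essentially the same route as the paper: compute the equality times $m/n$ with $\gcd(m,n)=1$, conclude $D=n$, use the equality time $1/n$ to force $Q=0$ (hence all $u_j=0$), and verify both conditions of (II.b) with $v_\mu=1$ at $t=1/n$. The only difference is that you spell out the pigeonhole justification for the equality-time characterization and the explicit description of $\rho_j,\lambda_j$ as modular inverses, which the paper simply asserts.
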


\begin{proof}
The equality times for $v_1, \ldots, v_{n-1}$ are precisely the times of the form $$\frac{m}{n},$$
where $m$ is relatively prime to $n$, so $D=n$.  The only admissible value of $Q$ is $0$ because otherwise we would have $\Abs{tQ} \geq 1/n$ at the equality time $t=1/n$.  Since $Q=0$, we are in fact in the setting of Lemma~\ref{lem:divisible}, so we simply have to find the smallest $\rho_j$ or $\lambda_j$.  We get $\rho_1=1$ at the time $t_1=1/n$, and this is the smallest possible.  Since $Q=0$ implies that $u_1=0$, the first condition of (II.b) is immediately satisfied.  For the second condition, it suffices to observe that
$$\frac{D}{nv_{\rho_1}(1-nu_1)}=\frac{n}{n(1)(1-0)}=1$$
is an integer.
\end{proof}
Since we stayed in the Lemma~\ref{lem:divisible} ``special subcase'' of Lemma~\ref{lem:divisible-2}, we are justified in using the explicit bounds on $v_n$ ``sufficiently large'' from the former lemma.  We remark that it would be interesting to carry out these computations for the other tight sets of speeds discussed in Goddyn and Wong \cite{goddyn}.

Recall from Corollary~\ref{cor:3-runners} that the only tight speed set for $3$ runners is, up to scaling, $1,2,3$.  It then follows from the preceding discussion that Conjecture~\ref{conj:1-fast} holds for $n=4$ (and the function $f_4$ can be taken to be quartic in $v_{n-1}$).

\begin{corollary}
There exists a function $f_4: \mathbb{N} \to \mathbb{N}$ such that for any positive integers $v_1<v_2<v_3<v_4$ with $v_4>f_4(v_3)$, we have either $$\ML(v_1, v_2, v_3, v_4)=\frac{s}{4s+1} \quad \text{for some } s \in \mathbb{N} \quad \text{or} \quad \ML(v_1,v_2,v_3,v_4)\geq \frac{1}{4}.$$
\end{corollary}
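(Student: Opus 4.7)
The plan is to invoke the Spectrum Conjecture for three runners (Theorem~\ref{thm:3-runners}) and split on the value of $\ML(v_1, v_2, v_3)$. By Theorem~\ref{thm:3-runners} this quantity is either of the form $s/(3s+1)$ for some $s \in \mathbb{N}$ or at least $1/3$. Every such value strictly exceeds $1/4$ except $1/4$ itself (the $s = 1$ case), since the next discrete-part value is $2/7$.

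In the case $\ML(v_1, v_2, v_3) > 1/4$, I would apply the lemma immediately following Lemma~\ref{lem:1-fast}, whose standing assumption that the Spectrum Conjecture holds for $n - 1 = 3$ runners is supplied by Theorem~\ref{thm:3-runners}. It produces $\ML(v_1, v_2, v_3, v_4) \geq 1/4$ as soon as $v_4 \geq 7 v_3$, a bound that is linear in $v_3$.

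In the case $\ML(v_1, v_2, v_3) = 1/4$, Corollary~\ref{cor:3-runners} forces $(v_1, v_2, v_3) = (g, 2g, 3g)$ for some positive integer $g$. I would then invoke Proposition~\ref{prop:computation-for-consecutive-speeds}, which verifies condition (II.b) of Theorem~\ref{thm:condition} for the unscaled speeds $1, 2, 3$, so Theorem~\ref{thm:condition} delivers the desired dichotomy for $\ML(1, 2, 3, w)$ once $w$ is sufficiently large. To pass from $(1, 2, 3)$ up to $(g, 2g, 3g)$ I would replay the scaling/pre-jump argument from the proof of Theorem~\ref{thm:reduction-to-computation}: when $g \mid v_4$, write $v_4 = g w$ and exploit the scale-invariance $\ML(g, 2g, 3g, g w) = \ML(1, 2, 3, w)$ to transport the dichotomy back; when $g \nmid v_4$, a pre-jump of increment $1/g$ applied at any equality time of $(g, 2g, 3g)$ displaces the fourth runner by at least $1/2 - 1/(2g) \geq 1/4$ (using $g \geq 2$), so $\ML(v_1, v_2, v_3, v_4) \geq 1/4$ holds for free.

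For the quartic form of $f_4$, observe that in Proposition~\ref{prop:computation-for-consecutive-speeds} the only admissible residue is $Q = 0$, placing us in the Lemma~\ref{lem:divisible} special case of Lemma~\ref{lem:divisible-2}, with explicit threshold $v_n > 4 v_{n-1}^3 v_1$. Applied to the unscaled speeds $1, 2, 3$ and then rescaled by the factor $g$, this gives a bound of the form $v_4 > 108 g^4 = \tfrac{4}{3} v_3^4$, which dominates the linear threshold from the first case. The main obstacle is really only the bookkeeping of the scaling factor $g$ and verifying that the pre-jump uniformly dispatches every residue class of $v_4$ modulo $4g$ that is not a multiple of $g$; the structural content has already been packaged by Theorems~\ref{thm:condition} and~\ref{thm:reduction-to-computation}, so the remaining work is essentially assembly rather than invention.
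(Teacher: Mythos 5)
Your proposal is correct and takes essentially the same route as the paper: the paper deduces this corollary by feeding Corollary~\ref{cor:3-runners} and Proposition~\ref{prop:computation-for-consecutive-speeds} into Theorem~\ref{thm:reduction-to-computation}, whose proof contains exactly the case split (non-tight versus the tight triple $1,2,3$ up to scaling) and the scaling/pre-jump argument that you replay by hand. The only cosmetic difference is that you unpack that theorem for $n=4$ (and invoke the linear-bound lemma following Lemma~\ref{lem:1-fast} in the non-tight case) rather than citing it wholesale.
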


We can carry out the same program for $n=6$ by making use of Bohman, Holzman, and Kleitman's determination \cite{bohman} of all of the tight sets of speeds for $5$ moving runners.

\begin{theorem}[Bohman, Holzman, and Kleitman \cite{bohman}]\label{thm:tight-5}
The Lonely Runner \linebreak Conjecture holds for $n=5$.  Moreover, if $v_1, \ldots, v_5$ are positive integers with \linebreak $\gcd(v_1, \ldots, v_5)=1$ and $\ML(v_1, \ldots, v_5)=1/6$, then $v_1, \ldots, v_5$ are (in some order) either $1,2,3,4,5$, or $1,3,4,5,9$.
\end{theorem}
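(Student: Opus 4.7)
The plan is to combine a perturbation-based reduction with the classification of tight configurations for $n=4$, reducing both claims to a finite computation. The $n=4$ Lonely Runner Conjecture (proved in \cite{pomerance,bienia}) guarantees $\ML(v_1,\ldots,v_4) \geq 1/5$, so the argument naturally splits on whether this inequality is strict.

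When $\ML(v_1,\ldots,v_4) > 1/5$ strictly, Lemma~\ref{lem:1-fast} directly yields $\ML(v_1,\ldots,v_5) \geq 1/5 > 1/6$ whenever $v_5 \geq v_4(2v_4-1)$. Iterating this type of reduction downward via the lower cases (Theorem~\ref{thm:2-and-3}) bounds each ratio $v_i/v_{i-1}$ by a polynomial in $v_{i-1}$, leaving a finite scale-normalized search for the remaining bounded regime that can be verified directly using Proposition~\ref{prop:times-to-check}.

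The more delicate case is when $(v_1,\ldots,v_4)$ is itself tight, i.e., $\ML(v_1,\ldots,v_4) = 1/5$. Here Theorem~\ref{thm:condition} combined with Lemma~\ref{lem:divisible-2} gives, for each admissible residue of $v_5$ modulo the equality-time denominator $D$, an exact asymptotic formula for $\ML(v_1,\ldots,v_5)$ in the large-$v_5$ regime. Running this analysis across all tight $n=4$ configurations (available in principle from the $n=4$ proof, see also Goddyn-Wong \cite{goddyn}) produces a finite list of candidates for $\ML = 1/6$. I expect the classical set $\{1,2,3,4,5\}$ to emerge from the base $\{1,2,3,4\}$ with $v_5=5$, and the exotic set $\{1,3,4,5,9\}$ to arise from a different tight $n=4$ base together with the arithmetically forced choice $v_5=9$; moderate values of $v_5$ not covered by the asymptotic formula must be checked individually.

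The main obstacle is keeping the finite case check tractable. The polynomial bounds obtained from naive perturbation (applied four times down to $v_1$) leave a search space too large for a clean hand computation, so the bounds must be sharpened by exploiting the rigidity of the tight lower-$n$ configurations via pre-jumps, as in Section~\ref{sec:tools}. Moreover, ruling out equality cases beyond the two listed requires a careful residue-class analysis modulo each tight-configuration equality-time denominator; the unexpected appearance of $\{1,3,4,5,9\}$ shows that no clean inductive shortcut eliminates this casework, and organizing it at human scale (rather than by computer) is where the real difficulty of the theorem lies.
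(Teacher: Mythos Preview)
This theorem is not proved in the paper: it is quoted from Bohman, Holzman, and Kleitman \cite{bohman} and used as a black box in the corollary that follows. There is no proof here to compare your proposal against.

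Taken on its own, your plan has two genuine gaps. First, the reduction to a finite search does not go through. Lemma~\ref{lem:1-fast} (and Proposition~\ref{prop:perturb} beneath it) absorbs a \emph{single} fast top runner: once you are in the regime $v_5<v_4(2v_4-1)$, ``iterating downward'' only bounds further ratios $v_i/v_{i-1}$, and bounded ratios together with $\gcd=1$ do not bound the absolute sizes (five consecutive integers starting at any $N$ already show this). To cover, say, $v_4$ and $v_5$ both large relative to $v_3$ but comparable to one another, you would need a perturbation lemma that absorbs several fast runners simultaneously, and no such tool is provided here. The original argument in \cite{bohman} is lengthy precisely because this regime demands genuinely new combinatorial work rather than an inductive shortcut.

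Second, your tight-case analysis presupposes a complete list of tight $4$-runner configurations. The paper explicitly remarks, at the end of Section~\ref{sec:reformulating}, that the literature contains no such characterization, so ``available in principle from the $n=4$ proof'' is not a usable input. Even granting that list, Lemma~\ref{lem:divisible-2} and Theorem~\ref{thm:condition} control only the asymptotic regime $v_5\to\infty$; the residual ``moderate $v_5$'' window they leave behind is again unbounded, for the same reason as above.
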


\begin{corollary}
There exists a function $f_6: \mathbb{N} \to \mathbb{N}$ such that for any positive integers $v_1<\cdots<v_6$ with $v_6>f_6(v_5)$, we have either $$\ML(v_1, \ldots, v_6)=\frac{s}{6s+1} \quad \text{for some } s \in \mathbb{N} \quad \text{or} \quad \ML(v_1, \ldots, v_6)\geq \frac{1}{6}.$$
\end{corollary}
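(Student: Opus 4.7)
The plan is to invoke Theorem~\ref{thm:reduction-to-computation} with $n=6$, leveraging Theorem~\ref{thm:tight-5}, which both confirms the Lonely Runner Conjecture for $5$ runners and classifies the tight speed sets. According to that classification, the only tight sets (up to scaling, with $\gcd=1$) are $1,2,3,4,5$ and $1,3,4,5,9$. So it suffices to verify the explicit finite conditions (II.b) of Theorem~\ref{thm:condition} for each of these two sets.

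The first set $1,2,3,4,5$ is already handled by Proposition~\ref{prop:computation-for-consecutive-speeds} applied with $n=6$. The remaining task is to verify (II.b) for the sporadic set $v_1=1,\,v_2=3,\,v_3=4,\,v_4=5,\,v_5=9$ with $L=1/6$. This is a finite direct computation, which I would carry out in three steps: (i) enumerate the equality times $t \in [0,1)$ at which $\Vert tv_i \Vert \geq 1/6$ for all $i$; (ii) determine the admissible residues $Q$; and (iii) identify the relevant $\mu$ and check the two arithmetic conditions.

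For step (i), Proposition~\ref{prop:times-to-check} restricts candidates to $t = m/(v_i+v_j)$ for $1 \leq i<j\leq 5$, a short list. A direct check should show that the only equality times are $t_1=1/6$ and $t_2=5/6$, giving $D=\operatorname{lcm}(6,6)=6$. For step (ii), the requirement $\Vert t_j Q\Vert <1/6$ for both $j$ eliminates every nonzero $Q$ in $\{-2,-1,0,1,2,3\}$, leaving only $Q=0$; we are therefore in the $u_j=0$ specialization of Lemma~\ref{lem:divisible}. For step (iii), computing fractional parts at $t_1=1/6$ gives $1/6,\,1/2,\,2/3,\,5/6,\,1/2$, so $\rho_1=1$ and $\lambda_1=4$; by the $t\mapsto 1-t$ symmetry, at $t_2=5/6$ we have $\rho_2=4$ and $\lambda_2=1$. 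Hence $\mu=1$, arising as $\mu=\rho_1$ with $k=1$. Both conditions of (II.b) then hold trivially because $u_k=0$: the identity $Q=-nv_{\rho_k}u_k$ reduces to $0=0$, and $D/(nv_{\rho_k}(1-nu_k))=6/6=1\in\mathbb{Z}$. Invoking Theorem~\ref{thm:reduction-to-computation} then yields a function $f_6$ as claimed.

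The main obstacle is the step~(i) enumeration: one must patiently confirm that no time beyond $\{1/6,5/6\}$ achieves $\Vert tv_i\Vert\geq 1/6$ simultaneously for all five speeds. Once this is settled, admissibility collapses to the clean case $Q=0$ and the remaining verification is essentially automatic; the subtler weighted-minimum selection of Lemma~\ref{lem:divisible-2} never comes into play. It would also be worthwhile to note in passing that the same outline should work for any other tight set one classifies for larger $n$, provided the equality times and admissible residues remain similarly sparse.
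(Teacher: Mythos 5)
Your proposal is correct and follows essentially the same route as the paper: reduce via Theorem~\ref{thm:reduction-to-computation} to the two tight sets from Theorem~\ref{thm:tight-5}, dispatch $1,2,3,4,5$ by Proposition~\ref{prop:computation-for-consecutive-speeds}, and for $1,3,4,5,9$ observe that the equality times are $1/6$ and $5/6$, so $D=6$, only $Q=0$ is admissible, and the Lemma~\ref{lem:divisible} specialization with $v_{\rho_1}=1$, $u_k=0$ makes the (II.b) conditions hold. Your explicit fractional-part computation at $t=1/6$ matches the paper's (more terse) verification, so no gap remains beyond the equality-time enumeration that the paper likewise asserts directly.
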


\begin{proof}
The set of tight speeds $1,2,3,4,5$ is handled by Proposition~\ref{prop:computation-for-consecutive-speeds}.  For $1,3,4,5,9$, the only equality times in $[0,1)$ are $1/6$ and $5/6$.  So $D=6$, and it is easy to check that only $Q=0$ is admissible.  As in the proof of Proposition~\ref{prop:computation-for-consecutive-speeds}, we find ourselves in the setting of Lemma~\ref{lem:divisible}, where we get the ``best possible'' value $v_{{\rho_1}}=1$ at the time $t_1=1/n$, and the conditions of (II.b) are satisfied in the same way.
\end{proof}

It is curious that the literature seems not to contain a characterization of all tight sets of speeds with $4$ moving runners.  It is widely known (see, e.g., \cite{goddyn}) that $1,3,4,7$ is a tight speed set in addition to the trivial $1,2,3,4$.  The reader may easily verify that the conditions of (II.b) are satisfied for $1,3,4,7$.  It appears likely that there are no other tight sets of speeds (up to scaling), in which case we would also be able to confirm Conjecture~\ref{conj:1-fast} for $n=5$.

\section{Speeds in a short arithmetic progression}\label{sec:short-progression}

Tao \cite{tao} obtained the following result for the Lonely Runner Conjecture in the case where all speeds are small.

\begin{theorem}[Tao \cite{tao}]\label{thm:tao}
If $v_1, \ldots, v_n$ are positive integers ($n \geq 1$) all less than or equal to $1.2n$, then $\ML(v_1, \ldots, v_n) \geq 1/(n+1)$.
\end{theorem}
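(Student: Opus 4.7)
The plan is to test candidate times of the form $t = k/(n+1)$ and a small perturbation thereof, exploiting the density of the speeds in $\{1, \ldots, \lfloor 1.2n \rfloor\}$.

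At $t = k/(n+1)$, the distance $\Abs{tv_i}$ equals $\min(kv_i \bmod (n+1),\; n+1 - (kv_i \bmod (n+1)))/(n+1)$, which is at least $1/(n+1)$ unless $kv_i \equiv 0 \pmod{n+1}$.  So it suffices to find $k \in \{1, \ldots, n\}$ such that $kv_i \not\equiv 0 \pmod{n+1}$ for every $i$.  Setting $d_i = \gcd(v_i, n+1)$, the forbidden values of $k$ contributed by runner $i$ are precisely the nonzero multiples of $(n+1)/d_i$ in $\{1, \ldots, n\}$, and there are at most $d_i - 1$ of them; so a good $k$ exists whenever $\sum_i (d_i - 1) < n$.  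To bound this sum, one uses that for each divisor $d \geq 2$ of $n+1$, the number of speeds divisible by $d$ is at most $\lfloor 1.2n/d \rfloor$ by the density hypothesis.  Grouping speeds by their exact gcd with $n+1$ and summing, a direct case analysis on the prime factorization of $n+1$ should yield $\sum_i (d_i - 1) < n$ whenever $n+1$ is not itself among the speeds.  The constant $1.2$ appears calibrated so that this bookkeeping succeeds, with the tightest cases arising when $n+1$ is a product of very small primes.

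The main obstacle is the case $v_j = n+1$ for some $j$: then $kv_j \equiv 0 \pmod{n+1}$ for every $k$, so no time of the form $k/(n+1)$ works.  Here I would perturb to $t = k/(n+1) + \delta$ with $\delta$ of order $1/(n+1)^2$, chosen to move the $(n+1)$-runner out to distance exactly $1/(n+1)$ from the origin, while shifting each other runner's position by only $v_i \delta \leq 1.2/(n+1)$.  To close the argument, one must find $k$ for which every other position $kv_i/(n+1)$ already sits at distance at least $1/(n+1) + v_i/(n+1)^2$ from the origin on the circle, i.e., inside a slightly shrunken safe range.  This reduces to a stricter variant of the counting argument from the previous paragraph, where the slack afforded by $v_i \leq 1.2n$ (rather than merely $v_i \leq n+1$) is exactly what leaves enough room to find such a $k$.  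The perturbation analysis here is delicate, and I expect this is where the proof devotes most of its technical effort, and where the precise value of the constant $1.2$ ultimately binds.
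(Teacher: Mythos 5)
This statement is quoted from Tao's paper; the present paper gives no proof of it, so your proposal can only be judged on its own merits, and as it stands it has a genuine gap at its central step. First, a local error: your claim that a case analysis on the prime factorization of $n+1$ yields $\sum_i (d_i-1) < n$ whenever $n+1$ is not among the speeds is false. Take $n=11$ and the speeds $1,2,\ldots,11$: then $\sum_i(\gcd(v_i,12)-1)=17>11$. The conclusion you want in that case is nevertheless trivial, and for a much simpler reason than a union bound: if no $v_i$ is divisible by $n+1$, then already $k=1$ works, since $\Abs{v_i/(n+1)} \geq 1/(n+1)$ for every residue $v_i \not\equiv 0 \pmod{n+1}$. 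So the entire content of Tao's theorem lies in the case you defer to the end, where some speed equals $n+1$ (the only multiple of $n+1$ that is $\leq 1.2n$).

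In that hard case your reduction is reasonable -- perturbing $t=k/(n+1)$ by $\delta \approx 1/(n+1)^2$ turns the problem into finding $k$ such that each residue $kv_i \bmod (n+1)$ avoids a short forbidden set near $0$ (roughly $\{0,n\}$, and also $n-1$ for speeds exceeding $n+1$) -- but the ``stricter variant of the counting argument'' you invoke to produce such a $k$ cannot work as a first-moment/union bound. When $n+1$ has many small prime factors the forbidden sets overlap in an essential way: the total count of forbidden values of $k$ can far exceed $n$, and restricting to $k$ coprime to $n+1$ does not help either, since $\phi(n+1)$ can be $o(n)$ while each of the $\sim n$ remaining runners with $\gcd(v_i,n+1)=1$ can eliminate a coprime class via $kv_i \equiv -1$. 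Indeed, for $n=11$ with speeds $\{12,1,2,3,4,5,6,7,8,9,10\}$ exactly one residue $k$ survives the constraints, so any argument that merely counts forbidden residues is doomed; one must exploit the arithmetic structure of how the speeds distribute among residue classes modulo the divisors of $n+1$, and this is where Tao's actual proof does its real work and where the constant $1.2$ is genuinely consumed. As written, your proposal identifies the right reduction but leaves the decisive combinatorial step unproved, and the specific mechanism you propose for it fails.
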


We make a few trivial observations about how this line of inquiry relates to our refined conjecture about the loneliness spectrum.  We begin by confirming the Spectrum Conjecture for speeds up to $1.2n$.
\begin{prop}\label{prop:1.2n}
If $v_1, \ldots, v_n$ are positive integers ($n \geq 2$) all less than or equal to $1.2n$, then one of the following holds:
\begin{itemize}
\item $\ML(v_1, \ldots, v_n)=1/(n+1)$ or $\ML(v_1, \ldots, v_n)=2/(2n+1)$.
\item $\ML(v_1, \ldots, v_n) \geq 1/n$.
\end{itemize}
\end{prop}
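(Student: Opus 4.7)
The plan is to combine Proposition~\ref{prop:times-to-check} (which forces $\ML$ to have a small denominator) with Tao's Theorem~\ref{thm:tao} (which gives a lower bound on $\ML$); together they squeeze $\ML$ into the tiny set $\{1/(n+1),\, 2/(2n+1)\}$ whenever $\ML<1/n$.

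First, I would reduce to the case $\gcd(v_1,\ldots,v_n)=1$ (using that $\ML$ is invariant under dividing out a common factor, and $1.2n$-boundedness is preserved under this operation). By Proposition~\ref{prop:times-to-check}, the maximum loneliness is attained at some $t_0=m/(v_i+v_j)$, and the case $\ML=1/2$ already satisfies $\ML\geq 1/n$ for $n\geq 2$; otherwise $\ML=m/q$ where $q:=v_i+v_j$ is an integer with
$$q\leq 2\cdot 1.2n=2.4n.$$
Tao's theorem supplies the lower bound $\ML\geq 1/(n+1)$, so in particular $m\geq 1$.

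Now assume for contradiction that $\ML<1/n$ (the other alternative is exactly what we want to avoid). From $m/q<1/n$ we get $m<q/n\leq 2.4$, so the only possibilities are $m=1$ and $m=2$. If $m=1$, then $1/q\geq 1/(n+1)$ and $1/q<1/n$ force $q=n+1$, yielding $\ML=1/(n+1)$. If $m=2$, then $2/q\geq 1/(n+1)$ and $2/q<1/n$ force $q\in\{2n+1,2n+2\}$, yielding $\ML=2/(2n+1)$ or $\ML=2/(2n+2)=1/(n+1)$. In every case $\ML$ lands in $\{1/(n+1),\,2/(2n+1)\}$, completing the proof.

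There is essentially no obstacle here: the heavy lifting is done by the two cited results, and the remaining work is the elementary observation that $q\leq 2.4n$ leaves no room for $m\geq 3$. The only thing to be slightly careful about is the $\ML=1/2$ case of Proposition~\ref{prop:times-to-check}, but that is handled trivially since $1/2\geq 1/n$ for $n\geq 2$.
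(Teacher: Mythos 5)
Your proposal is correct and is essentially the paper's own proof: Tao's Theorem~\ref{thm:tao} gives the lower bound $1/(n+1)$, Proposition~\ref{prop:times-to-check} forces $\ML(v_1,\ldots,v_n)$ to be a rational with denominator at most about $2.4n$, and the remaining arithmetic (which the paper merely asserts as ``the only rational between $1/(n+1)$ and $1/n$ with such a denominator is $2/(2n+1)$'') is exactly your numerator-counting case analysis. One small notational slip: at a local maximum $t_0=m/(v_i+v_j)$ the value of the loneliness is \emph{some} integer over $v_i+v_j$, not necessarily $m/(v_i+v_j)$ with that same $m$ --- but since your argument only uses that $\ML=m'/q$ for a positive integer $m'$ and $q\leq 2.4n$, the proof is unaffected.
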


\begin{proof}
By Theorem~\ref{thm:tao}, we know that $\ML(v_1, \ldots, v_n) \geq 1/(n+1)$.  We also know from Proposition~\ref{prop:times-to-check} that $\ML(v_1, \ldots, v_n)$ is a rational number with denominator at most $2.4n-1$ when expressed as a reduced fraction.  The only rational number between $1/(n+1)$ and $1/n$ with denominator at most $2.4n-1$ is $2/(2n+1)$.
\end{proof}
In fact, this argument shows that the Lonely Runner Conjecture and the Spectrum Conjecture are equivalent for speeds up to $1.5n+1$.

\begin{prop}
If $v_1, \ldots, v_n$ are positive integers ($n \geq 2$) all less than or equal to $1.5n+1$, then one of the following holds:
\begin{itemize}
\item $\ML(v_1, \ldots, v_n)<1/(n+1)$.
\item $\ML(v_1, \ldots, v_n)=s/(ns+1)$ for some integer $1 \leq s \leq 3$.
\item $\ML(v_1, \ldots, v_n) \geq 1/n$.
\end{itemize}
\end{prop}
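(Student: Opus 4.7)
The plan is to follow the template of Proposition~\ref{prop:1.2n}: bound the denominator of $\ML(v_1,\ldots,v_n)$ via Proposition~\ref{prop:times-to-check}, then enumerate the reduced fractions in $[1/(n+1),1/n)$ whose denominators meet that bound. The key difference from Proposition~\ref{prop:1.2n} is that Tao's theorem (Theorem~\ref{thm:tao}) no longer applies when speeds may exceed $1.2n$, so the option $\ML(v_1,\ldots,v_n)<1/(n+1)$ is retained as a possibility in the trichotomy rather than being ruled out.

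First I would reduce to the case where $\gcd(v_1,\ldots,v_n)=1$ and the $v_i$ are pairwise distinct; both reductions may be made without loss of generality, as noted in Section~\ref{sec:tools}, and they only lower the speed bound. Proposition~\ref{prop:times-to-check} then implies that $\ML(v_1,\ldots,v_n)=\Abs{t_0 v_k}$ for some $t_0=m/(v_i+v_j)$ with $i\neq j$, so the maximum loneliness is a rational whose denominator in lowest terms divides $v_i+v_j$ for some pair of distinct speeds. Since the two largest distinct integer speeds, each at most $1.5n+1$, satisfy $v_i+v_j\leq 3n+1$, this reduced denominator is at most $3n+1$.

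The remaining task is to classify the reduced fractions $p/q\in[1/(n+1),1/n)$ with $q\leq 3n+1$. The double inequality $pn<q\leq p(n+1)$ restricts $q$ to a short window for each $p$; combined with $q\leq 3n+1$, this forces $p\leq 3$, since $p\geq 4$ already yields $q>4n>3n+1$ for $n\geq 2$. Working through $p=1,2,3$ in turn and discarding non-reduced fractions produces exactly $1/(n+1)$, $2/(2n+1)$, and $3/(3n+1)$, which are precisely $s/(ns+1)$ for $s\in\{1,2,3\}$. Hence if $\ML(v_1,\ldots,v_n)$ does not fall below $1/(n+1)$ or reach $1/n$, it must equal one of these three values, completing the trichotomy.

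I do not anticipate a substantive obstacle: this proposition is essentially a numerical refinement of Proposition~\ref{prop:1.2n}. The only point requiring a bit of care is the sharpness of the bound $v_i+v_j\leq 3n+1$, which just narrowly excludes the competing reduced fraction $3/(3n+2)$ from the admissible list; if the threshold had been $1.5n+2$ instead of $1.5n+1$, the statement would need to accommodate an additional (non-$s/(ns+1)$) value.
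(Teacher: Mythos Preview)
Your proposal is correct and follows exactly the paper's approach: the paper's proof consists of the single sentence ``We know that $\ML(v_1, \ldots, v_n)$ is a rational number with denominator at most $3n+1$,'' and you have simply fleshed out the denominator bound via Proposition~\ref{prop:times-to-check} and carried out the enumeration of admissible reduced fractions explicitly. Your remark that distinctness of the two speeds in $v_i+v_j$ is what makes the bound $3n+1$ rather than $3n+2$ (thereby excluding $3/(3n+2)$) is exactly the point the paper is gesturing at when it follows this proposition with Conjecture~\ref{conj:1.501n}.
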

\begin{proof}
We know that $\ML(v_1, \ldots, v_n)$ is a rational number with denominator at most $3n+1$.
\end{proof}

It would be interesting to extend results in this direction.  For instance, the first step would be to establish something along the lines of the following.

\begin{conjecture}\label{conj:1.501n}
If $v_1, \ldots, v_n$ are positive integers ($n \geq 2$) all less than or equal to $1.501n$, then $\ML(v_1, \ldots, v_n) \neq 3/(3n+2)$.
\end{conjecture}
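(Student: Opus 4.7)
My plan combines the critical-time reduction of Proposition~\ref{prop:times-to-check} with a pre-jump argument in the spirit of Lemma~\ref{lem:3-runners}. Suppose for contradiction that $\ML(v_1, \ldots, v_n) = 3/(3n+2)$; since $\gcd(3, 3n+2) = 1$, this fraction is in lowest terms, so Proposition~\ref{prop:times-to-check} forces the maximum to be attained at some $t_0 = m/(v_i+v_j)$ with $(3n+2) \mid (v_i + v_j)$. The bound $v_i + v_j \leq 2 \cdot 1.501n = 3.002n$ then gives $v_i + v_j = 3n+2$, hence $v_i, v_j \in I_n := [\lceil 1.499n+2 \rceil, \lfloor 1.501n \rfloor]$, and the solvability of $m v_i \equiv 3 \pmod{3n+2}$ further requires $\gcd(v_i, 3n+2) \in \{1, 3\}$. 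A short arithmetic check shows that $I_n$ contains no pair of distinct integers summing to $3n+2$ for $n \leq 1500$, disposing of the small-$n$ case.

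For $n \geq 1501$ such a pair does exist, with $v_i, v_j$ within a few units of $(3n+2)/2$. I would then probe the auxiliary critical times $t_\ell := \ell/(3n+2)$. Because $v_j \equiv -v_i \pmod{3n+2}$, the loneliness contributed by $\{v_i, v_j\}$ at $t_\ell$ equals $\|\ell v_i/(3n+2)\|$, and this is $\leq 3/(3n+2)$ only for the short list of $\ell$ with $\ell v_i \equiv r \pmod{3n+2}$ for $|r| \leq 3$. At every other $\ell$, some $v_k \neq v_i, v_j$ must satisfy $\|\ell v_k/(3n+2)\| \leq 3/(3n+2)$, which confines $v_k$ to at most seven residue classes modulo $3n+2$; intersecting with $[1, 1.501n]$ and with the complement of the forbidden residues at $t_0$ prunes the list further. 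For instance, at $\ell = 1$ this immediately forces some $v_k \in \{1, 2, 3\}$ into the multiset, and $\ell = 3, 5, 7, \ldots$ pin down further specific small and mid-range speeds.

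The main obstacle is turning these pointwise residue constraints into a global contradiction, as a crude counting bound like $(n-2) \cdot 7 \geq 3n - 5$ is too weak to close the case. My preferred route is a pre-jump in the spirit of Lemma~\ref{lem:3-runners}: leveraging $v_j = (3n+2) - v_i$ and the forced small speed $v_k \in \{1, 2, 3\}$, I would aim to find a rational shift $\alpha$ (of small denominator, most plausibly dividing $2(3n+2)$) such that $\|(t_0 + \alpha) v_\ell\| > 3/(3n+2)$ for every $\ell$, contradicting maximality. The obvious candidates $\alpha = 1/2$ and $\alpha = \pm 1/(3n+2)$ each fail individually---the former is killed by an even speed in $\{v_i, v_j\}$ whenever $3n+2$ is even, and the latter merely moves $t_0$ to a neighbouring admissible critical time---so the decisive step will be to tune $\alpha$ using both the near-symmetry of the critical pair and the extra rigidity imposed by the forced small speed.
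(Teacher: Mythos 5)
You should be aware that the paper does not prove this statement: Conjecture~\ref{conj:1.501n} is posed there as an open problem, precisely because once speeds up to $1.501n$ are allowed, the denominator bound from Proposition~\ref{prop:times-to-check} no longer excludes the value $3/(3n+2)$. So your proposal must stand as a complete proof on its own, and it does not. The parts you actually establish are correct and are genuine partial progress: since $\gcd(3,3n+2)=1$, attainment of $3/(3n+2)$ at a critical time $m/(v_i+v_j)$ forces $(3n+2)\mid(v_i+v_j)$, hence $v_i+v_j=3n+2$ with both speeds pinned near $1.5n$, which is impossible for small $n$ (your threshold bookkeeping is slightly off---such pairs already exist for odd $n\ge 1501$ but for even $n$ only from $n\ge 2000$---though the small-$n$ conclusion is fine), and the time $1/(3n+2)$ does force a speed in $\{1,2,3\}$.

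The gap is everything after that. You concede that the counting bound of roughly $7(n-2)$ against roughly $3n$ uncovered times fails, and in fact the covering constraints are weaker than your outline suggests: when $\gcd(\ell,3n+2)=3$ a single time $t_\ell$ allows nine residue classes rather than seven, and a speed sharing a large factor with $3n+2$---most notably $v=(3n+2)/2=1.5n+1$, which is an integer for even $n$ and is at most $1.501n$ once $n\ge 1000$---by itself covers every even $\ell$, so the ``pointwise residue constraints'' impose far less rigidity than advertised. The proposed rescue, a pre-jump shift $\alpha$ with $\Vert (t_0+\alpha)v_\ell\Vert>3/(3n+2)$ for all $\ell$, is exactly the step you leave unspecified after observing that the natural candidates $\alpha=1/2$ and $\alpha=\pm 1/(3n+2)$ fail; nothing in the outline shows such an $\alpha$ exists, and the structured configurations above (the pair $\{v_i,\,3n+2-v_i\}$, possibly the speed $(3n+2)/2$, a forced small speed, and otherwise weakly constrained remaining speeds) are precisely what it would have to defeat. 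For comparison, even the easier regime of speeds up to $1.2n$ (Theorem~\ref{thm:tao}) required Tao's structural arguments rather than critical-time bookkeeping, which is a strong indication that your missing ``decisive step'' is the substance of the problem, not a routine finish.
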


\section{Interlude: random runners and lacunary runners}\label{sec:interlude}

In this section, we briefly generalize two existing results from the literature on the Lonely Runner Problem.

First, we discuss runners with ``random'' speeds.  Czerwi\'{n}ski \cite{czerwinski} used Fourier-analytic methods to prove the following result, which says that almost all sets of speeds have maximum loneliness arbitrarily close to $1/2$.

\begin{theorem}[Czerwi\'{n}ski \cite{czerwinski}]\label{thm:random}
Fix a positive integer $n$, and fix any $\varepsilon>0$.  If a subset $\{v_1, \ldots, v_n\}$ is chosen uniformly at random from all $n$-element subsets of $\{1, 2, \ldots, N\}$, then the probability that
$$\ML(v_1, \ldots, v_n) \geq \frac{1}{2}-\varepsilon$$
tends to $1$ as $N$ approaches infinity.
\end{theorem}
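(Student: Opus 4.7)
The plan is to use Fourier analysis on $\mathbb{R}/\mathbb{Z}$ together with a first-moment argument. Fix a smooth, nonnegative function $\chi: \mathbb{R}/\mathbb{Z} \to \mathbb{R}$ supported on $[1/2-\varepsilon,\,1/2+\varepsilon]$ with $\chi(1/2)>0$, so that $\chi(x)>0$ forces $\Abs{x} \geq 1/2-\varepsilon$. The Fourier coefficients $\hat\chi(k)$ decay faster than any polynomial in $|k|$, and $c := \hat\chi(0)=\int \chi > 0$. If
$$I(v_1,\ldots,v_n):=\int_0^1 \prod_{i=1}^n \chi(tv_i)\,dt > 0,$$
then there is some $t$ with $\chi(tv_i)>0$ for all $i$, and hence $\ML(v_1,\ldots,v_n) \geq 1/2-\varepsilon$; so it suffices to show that this integral is positive with probability tending to $1$ as $N \to \infty$.

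Expanding $\chi$ as a Fourier series and integrating term by term gives
$$I(v_1,\ldots,v_n) \;=\; \sum_{\vec k \in \mathbb{Z}^n:\; k_1v_1 + \cdots + k_n v_n = 0}\; \prod_{i=1}^n \hat\chi(k_i).$$
The ``diagonal'' term $\vec k = 0$ contributes $c^n$. Using the super-polynomial decay of $\hat\chi$, one can choose a threshold $K = K(\varepsilon, n)$ so that
$$\sum_{\vec k\in \mathbb{Z}^n:\; \max_i |k_i| > K}\; \prod_{i=1}^n |\hat\chi(k_i)| \;<\; c^n/2,$$
uniformly in $\vec v$. Hence it remains only to control the contribution of nontrivial $\vec k$ with $\max_i |k_i| \leq K$ satisfying $\sum_i k_i v_i = 0$.

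For each fixed nonzero $\vec k$ with $\max_i |k_i| \leq K$, the relation $\sum_i k_i v_i = 0$ pins down one coordinate of $\vec v$ in terms of the others, so the probability that a uniformly random $n$-subset of $\{1,\ldots,N\}$ satisfies it is $O(1/N)$ (with constant depending on $K$). A union bound over the at most $(2K+1)^n$ admissible $\vec k$'s shows that with probability $1 - O_{n,\varepsilon}(1/N)$, no such nontrivial relation holds; on this event, $I(v_1,\ldots,v_n) \geq c^n - c^n/2 = c^n/2 > 0$, as desired. The main obstacle is the uniform truncation step: we need $K$ to depend only on $\varepsilon$ and $n$, which forces us to work with a smoothed version of the sharp indicator $\mathbf{1}_{[1/2-\varepsilon,\,1/2+\varepsilon]}$ (rather than the indicator itself, whose Fourier coefficients decay only like $1/|k|$ and hence cannot be truncated in an absolutely summable way). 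Once this smoothing is in place, the Fourier-plus-first-moment computation is essentially automatic.
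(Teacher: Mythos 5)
Your proposal is correct, and it reaches the theorem by a genuinely different (self-contained) route than the paper, which does not reprove Czerwi\'{n}ski's result from scratch: the paper quotes his Fourier-analytic Lemma~\ref{lem:l-independent} (an explicit, quantitative criterion over $\mathbb{F}_p$: $\ell$-independence of the speeds modulo a large prime forces $\ML \geq 1/2-\varepsilon$) and then, exactly as in the proof of Theorem~\ref{thm:random-2}, bounds the number of $n$-subsets admitting a short relation $c_1v_1+\cdots+c_nv_n=0$ in $\mathbb{F}_p$ by $(2\ell+1)^n\binom{N-1}{n-1}$, giving failure probability $O_{n,\varepsilon}(1/N)$. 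Your argument has the same two-part skeleton---a deterministic ``no short linear relation implies nearly $1/2$-lonely'' criterion plus a union bound showing a random subset has no short relation---but you prove the criterion yourself by a soft first-moment computation: integrating a smooth bump over continuous time on $\mathbb{R}/\mathbb{Z}$, rather than invoking the quantitative lemma at the discrete times $m/p$. The smoothing is indeed the crucial point, since it makes the Fourier tail absolutely summable uniformly in the speeds, so the truncation level $K$ depends only on $n$ and $\varepsilon$; the trade-off is that your constants are inexplicit, whereas Czerwi\'{n}ski's route yields the explicit probability bound and threshold on $N$ recorded after the theorem (and the $\ell$-independence formulation that the paper reuses in Theorem~\ref{thm:random-2}). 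Your counting step is essentially identical to the paper's (for a fixed nonzero $\vec{k}$ with $k_j \neq 0$ there are at most $N^{n-1}$ bad subsets, since all coordinates but the $j$-th determine the last, so each relation has probability $O_n(1/N)$), and like the paper's proof it works verbatim for a random $n$-subset of an arbitrary $N$-element set of positive integers. If you write this up, the only details to spell out are the term-by-term integration, justified by $\sum_{\vec{k}}\prod_i|\hat\chi(k_i)|=\bigl(\sum_k|\hat\chi(k)|\bigr)^n<\infty$ for smooth $\chi$, and the remark that restricting the tail sum to vectors satisfying $\sum_i k_iv_i=0$ only decreases it, so the truncation bound is uniform in the speeds as claimed.
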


In fact, Czerwi\'{n}ski's argument gives the quantitative bound
$$\mathbb{P}\left[\ML(v_1, \ldots, v_n) \geq \frac{1}{2}-\varepsilon \right]>1-\frac{(2N^{\frac{1}{n+1}}+1)^n}{N-n}$$
whenever $N$ is large enough to satisfy
$$\sqrt{\frac{n^3 3^{n-1}}{2^{n+1}\varepsilon^{2n}}}+1<N^{\frac{1}{n+1}}.$$
(Czerwi\'{n}ski states this quantitative version only for prime $N$.  Also, Alon \cite{alon} later improved this bound.)  We mention the parts of Czerwi\'{n}ski's proof that we need in order to generalize his result.  For a prime $p$ and an element $x \in \mathbb{F}_p$, define $\Abs{x}_p$ to be the minimum distance from a representative in $\mathbb{Z}$ of $x$ to a multiple of $p$.  Then, given a positive integer $\ell$, say that a subset $\{v_1, \ldots, v_n\} \subseteq \mathbb{F}_p$ is \emph{$\ell$-independent} if every solution over $\mathbb{F}_p$ to
$$c_1v_1+\cdots+c_nv_n=0$$
has
$$\Abs{c_1}_p+\cdots+\Abs{c_n}_p > \ell.$$
Czerwi\'{n}ski's main tool is the following lemma.

\begin{lemma}[Czerwi\'{n}ski \cite{czerwinski}]\label{lem:l-independent}
Fix a positive integer $n$, and fix any $\varepsilon>0$.  Let $v_1<\cdots<v_n< p$ be positive integers.  If the image of $\{v_1, \ldots, v_n\}$ in $\mathbb{F}_p$ is $\ell$-independent with
$$\ell> \sqrt{\frac{n^3 3^{n-1}}{2^{n+1}\varepsilon^{2n}}},$$
then $\ML(v_1, \ldots, v_n) \geq 1/2-\varepsilon$.
\end{lemma}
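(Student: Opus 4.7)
The plan is to use discrete Fourier analysis on $\mathbb{F}_p$ to produce, by an averaging argument, a time $t = a/p$ at which every $\Vert t v_i \Vert$ is close to $1/2$. Set $A = \{x \in \mathbb{F}_p : \Vert x \Vert_p \geq (1/2 - \varepsilon) p\}$, a symmetric pair of arcs of total size roughly $2 \varepsilon p$ centered at $p/2$. For $t = a/p$, the condition $\Vert t v_i \Vert \geq 1/2 - \varepsilon$ is equivalent to $a v_i \in A$ (mod $p$), so it suffices to show that the count
\[
N := \sum_{a \in \mathbb{F}_p} \prod_{i=1}^n \mathbf{1}_A(a v_i)
\]
is strictly positive.

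I would expand each indicator in a Fourier series on $\mathbb{F}_p$: writing $e_p(y) = e^{2 \pi i y / p}$, we have $\mathbf{1}_A(x) = \sum_{k \in \mathbb{F}_p} \widehat{\mathbf{1}}_A(k) e_p(kx)$, so swapping sums and applying orthogonality gives
\[
N = p \sum_{\substack{(k_1, \ldots, k_n) \in \mathbb{F}_p^n \\ k_1 v_1 + \cdots + k_n v_n \equiv 0 \pmod p}} \prod_{i=1}^n \widehat{\mathbf{1}}_A(k_i).
\]
The trivial solution $k_1 = \cdots = k_n = 0$ contributes the main term $p \cdot (|A|/p)^n \approx p(2\varepsilon)^n$. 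The $\ell$-independence hypothesis rules out every nontrivial solution with $\sum_i \Vert k_i \Vert_p \leq \ell$, so the error $E$ is over tuples with $\sum_i \Vert k_i \Vert_p > \ell$.

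To bound $E$, I would use the estimate $|\widehat{\mathbf{1}}_A(k)| \leq \min\bigl(2\varepsilon,\, C/\Vert k \Vert_p\bigr)$ for $k \neq 0$ (coming from the fact that $A$ is a union of two short symmetric arcs, whose characters evaluate to a Dirichlet-type sum). Grouping nontrivial tuples by the shell vector $(m_1, \ldots, m_n)$ with $m_i = \Vert k_i \Vert_p$, each shell has at most $2^n$ tuples (two sign choices per coordinate), and I bound
\[
E \leq p \cdot 2^n \sum_{\substack{(m_1, \ldots, m_n) \\ \sum m_i > \ell}} \prod_i \min\!\bigl(2\varepsilon,\, C / m_i \bigr).
\]
A block decomposition splitting each coordinate according to whether $m_i \leq 1/(2\varepsilon)$ or $m_i > 1/(2\varepsilon)$, combined with a Cauchy--Schwarz-type estimate on each block, converts the tail constraint $\sum m_i > \ell$ into a bound of the form $E \lesssim p (2\varepsilon)^n \cdot (\text{const})^n / \ell^{\text{power}}$. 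Choosing $\ell$ as in the hypothesis then makes $E < p(2\varepsilon)^n$, so $N > 0$, which produces the required $a$.

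The main obstacle is making the constants in the Fourier decay and in the shell count line up to give exactly the stated threshold $\sqrt{n^3 3^{n-1} / (2^{n+1} \varepsilon^{2n})}$: the factor $3^{n-1}/2^{n+1}$ should emerge from the sign-and-permutation count of $\ell^1$-shells of integer vectors, while the $n^3$ and the square root reflect a Parseval-balanced split between the two Fourier bounds $|\widehat{\mathbf{1}}_A(k)| \leq 2\varepsilon$ and $|\widehat{\mathbf{1}}_A(k)| \leq C/\Vert k \Vert_p$. Conceptually the proof is a standard Fourier--combinatorial argument, but the bookkeeping to hit the exact threshold is where the delicate work lies.
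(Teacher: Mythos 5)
A preliminary remark: the paper itself offers no proof of this lemma --- it is imported verbatim from Czerwi\'{n}ski \cite{czerwinski} --- so the relevant comparison is with Czerwi\'{n}ski's original Fourier-analytic argument. Your skeleton (count $N=\sum_a\prod_i \mathbf{1}_A(av_i)$ for $t=a/p$, expand in characters, use orthogonality to restrict to solutions of $k_1v_1+\cdots+k_nv_n\equiv 0 \pmod p$, isolate the zero tuple as the main term, and invoke $\ell$-independence to force nontrivial solutions to high frequency) is indeed that argument's skeleton.

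However, the error estimate as you wrote it has a genuine gap, not merely unfinished bookkeeping. Your displayed bound on $E$ keeps only the condition $\sum_i m_i>\ell$ and discards the linear relation, and --- the deeper problem --- it uses the raw indicator, whose only available decay is $|\widehat{\mathbf{1}}_A(k)|\le\min(2\varepsilon, C/\Vert k\Vert_p)$. Since $\sum_{k\neq 0}\min(2\varepsilon, C/\Vert k\Vert_p)$ is of order $\log p$, the tail you wrote down is not bounded by anything of the shape $p(2\varepsilon)^n(\mathrm{const})^n/\ell^{\mathrm{power}}$: even after restoring the relation (which only removes one free coordinate), extracting a single coordinate with $\Vert k_j\Vert_p>\ell/n$ and summing the other $n-1$ coordinates freely yields a factor of order $(\log p)^{n-1}$, and with the relation dropped, as in your display, the bound exceeds the main term outright once $p$ is large. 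Because the threshold on $\ell$ in the lemma depends only on $n$ and $\varepsilon$ while $p$ may be arbitrarily large, any loss growing with $p$ is fatal. The shape of the constant is itself a hint at the repair: the $\ell^2$ under the square root and the factor $n^3=n\cdot n^2$ point to replacing $\mathbf{1}_A$ by a nonnegative smoothed minorant supported inside the target arc (a triangle/Fej\'{e}r-type kernel such as a normalized $\mathbf{1}_I*\mathbf{1}_I$), whose nonzero Fourier coefficients decay like $1/\Vert k\Vert_p^2$ and have a $p$-independent absolute sum. Then each nontrivial solution has some $\Vert k_j\Vert_p>\ell/n$, that coordinate contributes $O(n^2/\ell^2)$, the choice of $j$ contributes the factor $n$, and each of the remaining $n-1$ freely-summed coordinates contributes an $O(1)$ Fourier $\ell^1$ bound (the source of the $3^{n-1}$-type factor), to be compared against a main term of size comparable to $\varepsilon^{2n}$ per the smoothing --- consistent with the stated threshold. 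Positivity of the smoothed count still produces a time $a/p$ with every $\Vert av_i/p\Vert\ge 1/2-\varepsilon$. So your plan needs this change of weight function (or a genuinely worked-out Parseval substitute, which you gesture at but do not supply) before the quantitative step can go through.
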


Czerwi\'{n}ski deduces Theorem~\ref{thm:random} from this lemma by upper-bounding the number of $\ell$-dependent subsets of size $n$ in $\mathbb{F}_p$.  Our contribution consists of the observation that one can apply Lemma~\ref{lem:l-independent} to sets of speeds that come from $N$-element sets other than $\{1, \ldots, N\}$.  We follow Czerwi\'{n}ski's argument very closely.

\begin{theorem}\label{thm:random-2}
Fix a positive integer $n$, and fix any $\varepsilon>0$.  If a subset $\{v_1, \ldots, v_n\}$ is chosen uniformly at random from all $n$-element subsets of some $S \subset \mathbb{N}$ of cardinality $N$, then
$$\mathbb{P} \left[\ML(v_1, \ldots, v_n) \geq \frac{1}{2}-\varepsilon \right]>1-\frac{(2N^{\frac{1}{n+1}}+1)^n}{N-n}$$
whenever
$$\sqrt{\frac{n^3 3^{n-1}}{2^{n+1}\varepsilon^{2n}}}+1<N^{\frac{1}{n+1}}.$$
\end{theorem}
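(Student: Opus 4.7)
My plan is to follow Czerwi\'{n}ski's argument from the proof of Theorem~\ref{thm:random} in \cite{czerwinski} essentially verbatim, relying on the observation that Lemma~\ref{lem:l-independent} imposes no condition on the ambient set from which the speeds originate: it requires only that there exist a prime $p$ larger than $v_n$ such that the image of $\{v_1,\ldots,v_n\}$ in $\mathbb{F}_p$ is $\ell$-independent. Consequently, the counting bound on $\ell$-dependent $n$-subsets of $\mathbb{F}_p$ translates with minimal changes to a bound on $\ell$-dependent $n$-subsets of any $N$-element set $S\subset\mathbb{N}$, provided we arrange for $S$ to inject into $\mathbb{F}_p$.

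Concretely, I would first choose any prime $p>\max(S)$; such a prime exists trivially, and the argument will be uniform in the choice of $p$, so no delicate selection is required. Setting $\ell=\lfloor N^{1/(n+1)}\rfloor$, the hypothesis on $N$ gives $\ell>\sqrt{n^3 3^{n-1}/(2^{n+1}\varepsilon^{2n})}$, so Lemma~\ref{lem:l-independent} reduces the problem to showing that the probability that $\{v_1,\ldots,v_n\}$ has $\ell$-dependent image in $\mathbb{F}_p$ is at most $(2N^{1/(n+1)}+1)^n/(N-n)$. The counting step then runs exactly as in \cite{czerwinski}: there are at most $(2\ell+1)^n$ nonzero coefficient tuples $(c_1,\ldots,c_n)\in\mathbb{F}_p^n$ with $\sum_i|c_i|_p\le\ell$, and for each such tuple---after permuting coordinates so that $c_n\ne 0$---any choice of $v_1,\ldots,v_{n-1}\in S$ determines the residue of $v_n$ modulo $p$, which in turn picks out at most one element of $S$ because the map $S\hookrightarrow\mathbb{F}_p$ is injective. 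Dividing the resulting count of ordered dependent tuples by the number of ordered $n$-tuples of distinct elements of $S$ and performing the same manipulation as Czerwi\'{n}ski yields the stated probability bound.

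The only potentially subtle point is that Czerwi\'{n}ski's original argument for the set $\{1,\ldots,N\}$ places $p$ between $N$ and $2N$ via Bertrand's postulate, and one must verify that this upper bound on $p$ plays no role in the estimate. Inspection shows it does not: the count $(2\ell+1)^n$ of coefficient tuples and the ``at most one admissible $v_n$'' step are both uniform in $p$ once $p>\max(S)$, so the argument goes through unchanged. Accordingly, I anticipate no genuine obstacle beyond routine bookkeeping; the main content of the theorem is really the realization that Czerwi\'{n}ski's argument was never tied to the specific set $\{1,\ldots,N\}$ in the first place.
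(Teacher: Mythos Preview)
Your proposal is correct and follows essentially the same approach as the paper: choose a prime $p>\max(S)$, pick an integer $\ell$ between $\sqrt{n^3 3^{n-1}/(2^{n+1}\varepsilon^{2n})}$ and $N^{1/(n+1)}$, bound the number of $\ell$-dependent $n$-subsets by $(2\ell+1)^n\binom{N-1}{n-1}$ via the same ``fix the coefficients, then all but one speed'' count, and divide through. Your observation that the Bertrand-postulate upper bound on $p$ is irrelevant is exactly the point; the paper simply drops that step without comment.
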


\begin{proof}
Let $p$ be a prime that is larger than all of the elements of $S$.  Then there is an integer $\ell$ such that
$$\sqrt{\frac{n^3 3^{n-1}}{2^{n+1}\varepsilon^{2n}}}<\ell<N^{\frac{1}{n+1}}.$$
The number of $n$-element subsets of $S$ with $\ell$-dependent image in $\mathbb{F}_p$ is at most
$$(2\ell+1)^n \binom{N-1}{n-1}.$$
To see that this quantity is an upper bound on the number of subsets with ``bad'' linear dependences, note that the first term counts the ways to pick $c_1, \ldots, c_n$, each between $-\ell$ and $\ell$, and the second term is an upper bound on the number of ways to choose $v_1<\cdots<v_{n-1}$ from $S$; at this point, there is at most $1$ choice for $v_n$ that satisfies $c_1v_1+\cdots+c_nv_n=0$.  The probability that the image of $S$ is $\ell$-dependent is thus at most
$$\frac{(2\ell+1)^n \binom{N-1}{n-1}}{\binom{N-1}{n}}=\frac{n(2\ell+1)^n}{N-n}<\frac{(2N^{\frac{1}{n+1}}+1)^n}{N-n},$$
then the result follows from Lemma~\ref{lem:l-independent}.
\end{proof}

We mention one further aspect of Czerwi\'{n}ski's method that may be useful for future investigations.  He considers only times of the form $t=m/p$, where $p$ is a fixed prime that is larger than all of the speeds, so that he may bring in tools from Fourier analysis.  (This strategy is not unique to Czerwi\'{n}ski.)  In general, the aim is to find some $m$ such that every $\Abs{mv_i}_p$ is large, i.e., $\Abs{tv_i}$ is large.

This strategy at first glance seems ill-matched for proving results about the discrete part of the loneliness spectrum because checking times of the form $m/p$ can give us only a \emph{lower} bound on the maximum loneliness.  But recall from Proposition~\ref{prop:times-to-check} that the maximum loneliness must be a rational with bounded denominator (in terms of the speeds), and, in particular, any two such ``candidate'' times are uniformly bounded apart by $1/(4v_n^2)$, where $v_n$ is the speed of the fastest runner.  The discussion of the function $e(t)$ in the proof of Lemma~\ref{lem:divisible} shows that loneliness does not vary very quickly with time.  So for $p$ sufficiently large, knowing the greatest loneliness attained at a time of the form $m/p$ allows us to reconstruct the actual maximum loneliness.  This idea could allow us to apply Fourier-analytic methods to the intervals of maximum loneliness that are forbidden by the Spectrum Conjecture.
\\

Second, we discuss sets of speeds that form lacunary sequences.  Previous results of this flavor usually have the following form: if $v_1, \ldots, v_n$ satisfy $$\frac{v_i}{v_{i-1}} \geq c(n)$$ for all $2 \leq i \leq n$ and some specified $c(n)$, then $\ML(v_1, \ldots, v_n) \geq 1/(n+1)$.  One drawback to this approach is that the quantity $1/(n+1)$ usually does not play a particularly special role; at the cost of a worse constant, one could use, say, $1/n$ instead.  This plasticity suggests that such an approach is unlikely to lead to a full resolution of the Lonely Runner Problem.  We draw attention to this feature in an almost-state-of-the-art result of Dubickas \cite{dubickas}.  (Czerwi\'{n}ski \cite{czerwinski2} has recently obtained a slight refinement.)  Dubickas' main result (which holds for general real speeds) is as follows.

\begin{theorem}[Dubickas \cite{dubickas}]\label{thm:lacunary}
Fix some $n \geq 32$, and let $$h=\left\lfloor \frac{n+1}{12e} \right\rfloor.$$
If $v_1<\cdots<v_n$ are positive real numbers satisfying $v_{i+h} \geq (n+1) v_i$ for every $1 \leq i \leq n-h$, then we have $\ML(v_1, \ldots, v_n) > 1/(n+1)$.  (Here, $e$ is the base of the natural logarithm.)
\end{theorem}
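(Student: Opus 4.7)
The plan is to mimic the nested-intervals induction that underlies Proposition~\ref{prop:factorial-growth}, but to process the speeds in windows of $h$ consecutive indices at a time rather than one index at a time. The lacunarity hypothesis $v_{i+h}/v_i \geq n+1$ supplies precisely the geometric growth needed for this block-wise version of the induction to close.

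Concretely, I would inductively produce a decreasing chain of intervals $I_0 \supseteq I_1 \supseteq \cdots$ in $[0,1)$ of lengths $L_0 \geq L_1 \geq \cdots$, with the invariant that $\|tv_j\| > 1/(n+1)$ for every $t \in I_k$ and every $j \leq kh$. The goal is to preserve the quantitative bound $L_k \geq c/v_{kh}$ (for an absolute constant $c>0$) through all $\lceil n/h \rceil$ steps; any $t$ in the last interval then strictly exceeds the bound $1/(n+1)$, as required.

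To pass from $I_k$ to $I_{k+1}$, I would examine the forbidden sets of the next block of $h$ speeds $v_{kh+1}, \ldots, v_{(k+1)h}$ inside $I_k$. By a union bound their combined measure in $I_k$ is at most $2hL_k/(n+1) + O(h/v_{kh+1})$, so the good set has measure at least $L_k(1 - 2h/(n+1)) - O(h/v_{kh+1})$. Taking $I_{k+1}$ to be the longest connected component of this good set, the inductive step would close provided this longest component has length at least on the order of $(1 - 2h/(n+1))/v_{(k+1)h}$; this bound, combined with $v_{(k+1)h} \geq (n+1)v_{kh}$, then maintains $L_{k+1} \geq c/v_{(k+1)h}$.

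The main obstacle is controlling the fragmentation of the good set inside $I_k$: a priori the $h$ new speeds together could chop $I_k$ into as many as $\sum_{j=kh+1}^{(k+1)h} v_j L_k$ small good components, and then the longest would not be long enough to carry the induction. Overcoming this requires exploiting the internal lacunarity of the new block---that within $v_{kh+1}, \ldots, v_{(k+1)h}$ themselves there is still substantial geometric separation---to show that the forbidden arcs of the $h$ new speeds overlap sufficiently often that the good set remains assembled into components of the required length. The constant $12e$ should emerge from optimizing $h$ in a trade-off between two competing demands: $h$ should be large so that the induction requires fewer steps and the accumulated multiplicative loss $(1 - 2h/(n+1))^{n/h} \sim e^{-2}$ stays bounded below, and $h$ should be small so that the per-step factor $1 - 2h/(n+1)$ is close to $1$ and the fragmentation bound remains usable. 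The hypothesis $n \geq 32$ is then exactly the threshold at which the various boundary corrections and lower-order error terms become dominated by the main bound.
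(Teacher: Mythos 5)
There is a genuine gap, and it sits exactly where you flag it: the fragmentation step. Your inductive step needs the longest good component inside $I_k$ to have length comparable to $1/v_{(k+1)h}$, and to rescue this you appeal to ``the internal lacunarity of the new block.'' But the hypothesis gives no such thing: the condition $v_{i+h} \geq (n+1)v_i$ only compares speeds $h$ indices apart, so the $h$ speeds $v_{kh+1}, \ldots, v_{(k+1)h}$ inside one block may all lie within a factor $1+\varepsilon$ of each other, or be otherwise arbitrary. In that regime the union bound on measure survives (the block removes only about a $2h/(n+1) \approx 1/(6e)$ fraction of $I_k$), but the count of cut points does not: the block's forbidden arcs meet $I_k$ in roughly $\sum_{j} v_j \lvert I_k\rvert$ places, which is of order $h(n+1)$, far exceeding the roughly $n+1$ gaps of the fastest runner inside $I_k$. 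So nothing prevents every gap from being chopped into pieces much shorter than $c/v_{(k+1)h}$, and the induction does not close. This dependency problem is precisely why the known proofs do not run a longest-component argument: Dubickas (following the Peres--Schlag method) controls the overlaps via the Lov\'{a}sz Local Lemma. Indeed, in this paper the statement is quoted from Dubickas, and the proof is immediate from Lemma~\ref{lem:local}: taking $w = n+1$ and $h = \lfloor (n+1)/(12e) \rfloor$ (the hypothesis $n \geq 32$ is just what makes $h \geq 1$), that lemma produces a time $t$ with
$$\Abs{tv_i} > \frac{1}{8eh}-\frac{1}{2w} \geq \frac{12e}{8e(n+1)}-\frac{1}{2(n+1)} = \frac{1}{n+1}$$
for all $i$, which is the assertion of the theorem; the same computation with general $\alpha, c$ is how the paper proves Theorem~\ref{thm:lacunary2}. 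If you want to salvage your elementary scheme, you would need either an additional hypothesis giving growth within blocks, or a replacement for the longest-component selection that tolerates heavy fragmentation --- which is exactly the role the Local Lemma plays.
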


This theorem gives the following immediate corollary.

\begin{corollary}[Dubickas \cite{dubickas}]\label{cor:lacunary}
For every real number $\kappa>12e$, there exists a threshold integer $N(\kappa)$ such that the following holds: if $n \geq N(\kappa)$ and the positive real numbers $v_1<\cdots <v_n$ satisfy $$\frac{v_i}{v_{i-1}} \geq 1+\frac{\kappa \log n}{n}$$
for all $2 \leq i \leq n$, then $\ML(v_1, \ldots, v_n) > 1/(n+1)$.  (Here, $\log$ denotes the natural logarithm.)
\end{corollary}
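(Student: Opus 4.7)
The plan is to deduce the corollary directly from Theorem~\ref{thm:lacunary} by verifying its geometric hypothesis. Setting $h = \lfloor (n+1)/(12e) \rfloor$ and $\beta = 1 + (\kappa \log n)/n$, the assumed growth $v_i/v_{i-1} \geq \beta$ telescopes to $v_{i+h}/v_i \geq \beta^h$ for every $1 \leq i \leq n - h$, so it suffices to verify that $\beta^h \geq n+1$ for all sufficiently large $n$ (taking $N(\kappa) \geq 32$ as required by the theorem). Granted this, Theorem~\ref{thm:lacunary} yields $\ML(v_1, \ldots, v_n) > 1/(n+1)$ directly.

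Taking logarithms, the task reduces to showing $h \log \beta \geq \log(n+1)$. I would use the elementary inequality $\log(1+x) \geq x - x^2/2$ valid for $x \geq 0$, applied with $x = (\kappa \log n)/n$, to bound $\log \beta$ from below by $(\kappa \log n)/n$ minus a correction of order $(\log n)^2/n^2$; this correction is negligible relative to $(\log n)/n$ as $n \to \infty$. Combined with the obvious lower bound $h \geq (n+1)/(12e) - 1$, this yields the asymptotic estimate
$$h \log \beta \;\geq\; \frac{\kappa}{12e}\,(\log n)\,(1 + o(1)) \qquad (n \to \infty).$$

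Writing $\kappa = 12e\,(1 + \eta)$ with $\eta > 0$, the previous bound exceeds $(1 + \eta/2)\log n$ for $n$ sufficiently large, and since $\log(n+1) = \log n + O(1/n)$, the inequality $h \log \beta > \log(n+1)$ holds for all $n \geq N(\kappa)$ once $N(\kappa)$ is chosen appropriately in terms of $\eta$. The only delicate point is tracking the constants so that the slack $\kappa > 12e$ is actually exploited; beyond that, no ideas beyond Theorem~\ref{thm:lacunary} and routine estimates for $\log(1+x)$ are needed. In this sense the corollary is a quantitative repackaging of the theorem, and the main ``obstacle'' is merely bookkeeping rather than any substantive new argument.
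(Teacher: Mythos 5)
Your proposal is correct, and it is essentially the argument the paper has in mind: the paper states the corollary as an immediate consequence of Theorem~\ref{thm:lacunary}, obtained exactly by telescoping the ratio hypothesis to get $v_{i+h}/v_i \geq (1+\kappa\log n/n)^h \geq n+1$ for large $n$, using $h \approx n/(12e)$ and $\log(1+x) \approx x$, which is what you carry out. Your bookkeeping with $\kappa = 12e(1+\eta)$ and the bound $\log(1+x) \geq x - x^2/2$ correctly exploits the slack $\kappa > 12e$, so there is nothing missing.
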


Dubickas gives a slightly more involved argument that lets one replace $12e$ with $8e$.  His proof makes use of the following lemma (due to the same author \cite{dubickas2}), which in turn relies on the Lov\'{a}sz Local Lemma.  See also a similar technique of Peres and Schlag \cite{peres}.

\begin{lemma}[Dubickas \cite{dubickas2}]\label{lem:local}
Let $h$ be a positive integer, and fix any real number $w \geq 4eh$.  If $v_1<\cdots <v_n$ are positive real numbers satisfying $$v_{i+h} \geq w v_i$$
for all $1 \leq i \leq n-h$, then there a real number $t$ such that $$\Abs{tv_i} > \frac{1}{8eh}-\frac{1}{2w}$$
for all $1 \leq i \leq n$.
\end{lemma}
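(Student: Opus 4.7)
The plan is to apply the Lovász Local Lemma to a family of ``bad'' events indexed by $i \in \{1,\ldots,n\}$, using the lacunarity hypothesis $v_{i+h} \geq w v_i$ to ensure that meaningful dependencies only occur between nearby indices.

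First I would rescale (the conclusion is invariant under $(v_i, t) \mapsto (cv_i, t/c)$) so that $v_1 \geq 1$, and draw $t$ uniformly at random from $[0,1)$. For each index $i$ define the bad event $A_i = \{\Abs{tv_i} \leq \delta\}$ with $\delta = \tfrac{1}{8eh} - \tfrac{1}{2w}$, the threshold from the statement. Since $v_i \geq 1$, each bad event has $\Pr[A_i] \leq 2\delta$. The goal is to show $\Pr\bigl[\bigcap_i \overline{A_i}\bigr] > 0$, which produces a real number $t$ avoiding every bad event and thus witnessing $\Abs{tv_i} > \delta$ for all $i$.

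Second, I would take as dependency graph the relation $A_i \sim A_j$ iff $0 < |i-j| < h$, so that the maximum degree is $d = 2(h-1)$. The content of the lacunarity hypothesis is that $A_i$ should be approximately mutually independent of any Boolean combination of $\{A_j : |i-j| \geq h\}$: conditioning on the positions of $tv_j \bmod 1$ for such $j$ constrains $t$ only to a union of narrow sub-intervals, across each of which the faster of $v_i$ and $v_j$ (whose ratio with the slower one is at least $w$) winds around the circle many times, making the conditional distribution of $tv_i \bmod 1$ close to uniform with total-variation error of order $1/w$.

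Third, feeding these bounds into the symmetric Lovász Local Lemma (or, to absorb the correlation error cleanly, into a lopsided version tolerating perturbations of order $1/w$) produces a condition of the shape $e \cdot (2\delta + O(1/w)) \cdot (2h-1) \leq 1$. Rearranging gives $\delta \leq \tfrac{1}{2e(2h-1)} - O(1/w)$, and a careful accounting for the factor-of-two margin between closed and open bad events on the circle together with the correlation error yields exactly the stated threshold $\delta = \tfrac{1}{8eh} - \tfrac{1}{2w}$. The assumption $w \geq 4eh$ is precisely what ensures $\delta > 0$ and that the LLL condition closes up.

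The main obstacle will be making the near-independence quantitative and robust enough to plug into the LLL. Every $A_i$ is a measurable function of the single random variable $t$, so strict mutual independence outside the dependency neighborhood fails. The cleanest fix is to pass to a discretized probability space, say $t = k/N$ with $k$ uniform in $\{0,\ldots,N-1\}$ for a large prime $N$, and bound conditional probabilities via Fourier expansion, using the lacunarity to control the relevant character sums by $O(1/w)$; alternatively one can invoke an asymmetric or lopsided version of the Local Lemma. It is exactly this quantification of the correlation error that forces the correction $-\tfrac{1}{2w}$ into the stated threshold, and why $w \geq 4eh$ emerges as the natural working regime rather than the slightly weaker bound a naive application of LLL would predict.
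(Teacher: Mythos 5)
A preliminary remark: the paper does not prove this lemma at all --- it is quoted verbatim from Dubickas \cite{dubickas2}, whose argument (as the surrounding text notes) relies on the Lov\'{a}sz Local Lemma in the ordered, constructive form introduced by Peres and Schlag \cite{peres}. So the relevant comparison is with that method. Your overall strategy --- bad events $A_i=\{\Abs{tv_i}\le\delta\}$, a dependence range of $h$ coming from the lacunarity hypothesis, an LLL condition of the shape $e\cdot p\cdot(2h)\le 1$, and the term $-\tfrac{1}{2w}$ absorbing correlation error --- is the right family of ideas and matches the cited proof in spirit.

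There is, however, a genuine gap at the critical step. Your claim that $A_i$ is nearly independent, with total-variation error $O(1/w)$, of \emph{arbitrary Boolean combinations} of $\{A_j:\ |i-j|\ge h\}$ is both unproved and, as you justify it, incorrect in one direction. For $j\ge i+h$ the faster speed is $v_j$: conditioning on the position of $tv_j$ confines $t$ to sub-intervals of length about $1/v_j$, across each of which $tv_i$ moves by at most about $v_i/v_j\le 1/w$ --- it does not wind around at all, so within each sub-interval the conditional law of $tv_i$ is close to a point mass, not to uniform. Whatever approximate independence holds comes from averaging over the many sub-intervals, and making that quantitative simultaneously for an arbitrary intersection of complements of far events is precisely the hard part; since every $A_i$ is a deterministic function of the single random variable $t$, no two events are genuinely independent, so the symmetric LLL with a two-sided dependency graph cannot be invoked off the shelf. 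The known proof sidesteps this by being one-sided: process the speeds from slowest to fastest, maintain a nested family of surviving time-intervals, and use $v_{i+h}\ge wv_i$ to show that deleting the bad set for $v_i$ removes only a fraction of order $2\delta+1/w$ of each interval surviving from step $i-h$; LLL-style bookkeeping in the Peres--Schlag manner then keeps the surviving set nonempty and produces the exact threshold $\tfrac{1}{8eh}-\tfrac{1}{2w}$. Your last paragraph names plausible repairs (lopsided LLL, discretization plus Fourier bounds) but does not carry any of them out, and the passage from $e(2\delta+O(1/w))(2h-1)\le 1$ to the stated constant is asserted rather than derived (note also that for non-integer $v_i$ the bound $\Pr[A_i]\le 2\delta$ needs a rescaling or limiting argument, or an extra edge term). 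As written, the proposal is a plan whose decisive ingredient --- the quantitative, one-sided independence estimate --- is missing.
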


We use this lemma to derive a generalization of Theorem~\ref{thm:lacunary}.  We do not try to optimize constants.

\begin{theorem}\label{thm:lacunary2}
Fix some integer $c$ and some real number $\alpha>0$.  Then for all sufficiently large $n$, the following holds with $$h=\left\lfloor \frac{n+c}{12e\alpha} \right\rfloor \quad \text{and} \quad w=\frac{n+c}{\alpha}:$$
 if $v_1<\cdots<v_n$ are positive real numbers satisfying $v_{i+h} \geq w v_i$ for every $1 \leq i \leq n-h$, then we have $$\ML(v_1, \ldots, v_n) > \frac{\alpha}{n+c}.$$
\end{theorem}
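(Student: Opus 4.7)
The plan is to apply Lemma~\ref{lem:local} directly with the specified choices $h=\lfloor (n+c)/(12e\alpha)\rfloor$ and $w=(n+c)/\alpha$. Since the arithmetic hypothesis $v_{i+h}\geq w v_i$ is exactly what the theorem already assumes, the only work is to verify the two technical hypotheses of the lemma and then to observe that the lemma's conclusion, evaluated at these particular values, gives precisely the lower bound we want, up to a comfortable slack.

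For the hypotheses: once $n+c\geq 12e\alpha$ (this is the ``$n$ sufficiently large'' condition) we have $h\geq 1$, so the lemma is applicable at all. Next, using the floor inequality $h\leq (n+c)/(12e\alpha)$, we compute
$$4eh\;\leq\; 4e\cdot\frac{n+c}{12e\alpha}\;=\;\frac{n+c}{3\alpha}\;\leq\;\frac{n+c}{\alpha}\;=\;w,$$
so $w\geq 4eh$ holds automatically. Thus Lemma~\ref{lem:local} produces a real number $t$ satisfying $\|tv_i\|>\tfrac{1}{8eh}-\tfrac{1}{2w}$ for every $1\leq i\leq n$.

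For the conclusion, one just has to check that these two ``loss terms'' have been balanced against each other. Using $h\leq (n+c)/(12e\alpha)$ once more,
$$\frac{1}{8eh}\;\geq\;\frac{12e\alpha}{8e(n+c)}\;=\;\frac{3\alpha}{2(n+c)},$$
while trivially $\tfrac{1}{2w}=\tfrac{\alpha}{2(n+c)}$. Subtracting,
$$\frac{1}{8eh}-\frac{1}{2w}\;\geq\;\frac{3\alpha}{2(n+c)}-\frac{\alpha}{2(n+c)}\;=\;\frac{\alpha}{n+c}.$$
Combined with the strict inequality supplied by the lemma, we conclude $\|tv_i\|>\alpha/(n+c)$ for all $i$, which is the desired bound on $\ML(v_1,\ldots,v_n)$.

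There is essentially no obstacle here: the theorem is a ``setting the dials'' exercise on Lemma~\ref{lem:local}, and the particular constants $12e\alpha$ in the definition of $h$ and $(n+c)/\alpha$ in the definition of $w$ are chosen precisely so that the two loss terms $1/(8eh)$ and $1/(2w)$ come in at the same order and leave a factor of $2$ of slack, which is what reproduces the clean bound $\alpha/(n+c)$. The only subtlety, as anticipated in the statement, is that $h$ must be a positive integer, which forces the ``sufficiently large $n$'' hypothesis depending on $c$ and $\alpha$; one could write down an explicit threshold ($n\geq 12e\alpha-c$ suffices) but, as the author notes, optimizing the constants is not the point.
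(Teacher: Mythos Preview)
Your proof is correct and follows exactly the same approach as the paper's own argument: apply Lemma~\ref{lem:local} with the stated $h$ and $w$, use $h\le (n+c)/(12e\alpha)$ to bound $1/(8eh)\ge 3\alpha/(2(n+c))$, and subtract $1/(2w)=\alpha/(2(n+c))$. Your version is in fact more thorough, as you explicitly verify the hypotheses $h\ge 1$ and $w\ge 4eh$ that the paper leaves implicit.
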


\begin{proof}
Lemma~\ref{lem:local} with $h$ and $w$ as stated tells us that there exists a real number $t$ such that for all $1 \leq i \leq n$, we have
$$\Abs{tv_i} > \frac{1}{8eh}-\frac{1}{2w} \geq \frac{12e\alpha}{8e(n+c)}-\frac{\alpha}{2(n+c)}=\frac{\alpha}{n+c},$$
as desired.
\end{proof}

And, as before, we obtain a corollary on lacunary sequences.

\begin{corollary}
For every real number $\kappa>12e\alpha$, there exists a threshold integer $N(\kappa)$ such that the following holds: if $n \geq N(\kappa)$ and the positive real numbers $v_1<\cdots <v_n$ satisfy $$\frac{v_i}{v_{i-1}} \geq 1+\frac{\kappa \log n}{n}$$
for all $2 \leq i \leq n$, then $\ML(v_1, \ldots, v_n) > \alpha/(n+c)$.
\end{corollary}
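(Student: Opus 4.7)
The plan is to deduce this corollary directly from Theorem~\ref{thm:lacunary2}, exactly mirroring the way Dubickas' Corollary~\ref{cor:lacunary} is derived from his Theorem~\ref{thm:lacunary}. The substantive work---the Lov\'{a}sz-Local-Lemma argument underlying Lemma~\ref{lem:local}---has already been packaged into Theorem~\ref{thm:lacunary2}, so the task reduces to verifying that the consecutive-ratio hypothesis $v_i/v_{i-1} \geq 1 + \kappa \log n / n$ implies the block-lacunarity hypothesis $v_{i+h} \geq w v_i$ appearing there, for the particular choices of $h$ and $w$ prescribed by that theorem.

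Concretely, given $\kappa > 12e\alpha$, I would set $h = \lfloor (n+c)/(12e\alpha) \rfloor$ and $w = (n+c)/\alpha$ as in Theorem~\ref{thm:lacunary2}. Telescoping the hypothesis for $1 \leq i \leq n - h$ gives
$$\frac{v_{i+h}}{v_i} \geq \left(1 + \frac{\kappa \log n}{n}\right)^{h}.$$
Taking logarithms and using $\log(1+x) = x + O(x^2)$ as $x \to 0^+$, the logarithm of the right-hand side becomes
$$h \cdot \frac{\kappa \log n}{n}\,(1+o(1)) = \frac{\kappa}{12e\alpha} \log n \cdot (1 + o(1)),$$
where the second step uses $h = (n+c)/(12e\alpha) + O(1)$. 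Meanwhile, $\log w = \log n + O(1)$. Since $\kappa/(12e\alpha) > 1$ strictly by hypothesis, there is enough asymptotic slack to conclude that $\log(v_{i+h}/v_i) \geq \log w$ for all sufficiently large $n$, which is exactly the hypothesis needed to invoke Theorem~\ref{thm:lacunary2}.

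The threshold $N(\kappa)$ is then the point at which both this asymptotic comparison and the ``sufficiently large $n$'' clause of Theorem~\ref{thm:lacunary2} are simultaneously in effect. For $n \geq N(\kappa)$ and speeds satisfying the hypothesis of the corollary, Theorem~\ref{thm:lacunary2} directly yields $\ML(v_1, \ldots, v_n) > \alpha/(n+c)$, as required. I do not anticipate any serious obstacle here: the argument is a routine asymptotic calculation, and the strict inequality $\kappa > 12e\alpha$ provides exactly the cushion needed to absorb the $O(x^2)$ error in the expansion of $\log(1 + \kappa \log n/n)$ together with the $O(1)$ error in the floor function defining $h$. The only mild care required is to confirm that this error is $O((\log n)^2/n)$, which is negligible against the leading $\log n$ on each side of the comparison.
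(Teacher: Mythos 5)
Your proposal is correct and matches the paper's (implicit) argument: the paper states this corollary without proof, as the same routine deduction from Theorem~\ref{thm:lacunary2} that Dubickas uses to get Corollary~\ref{cor:lacunary} from Theorem~\ref{thm:lacunary}, namely telescoping the consecutive-ratio bound to verify $v_{i+h}\geq w v_i$ for the prescribed $h$ and $w$. Your asymptotic bookkeeping (the $O((\log n)^2/n)$ error and the slack from $\kappa>12e\alpha$) is exactly the calculation that makes this work.
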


Of course, the case $\alpha=1$, $c=0$ is of particular interest for the Spectrum Conjecture.

\section{Concluding remarks} \label{sec:conclusion}

Over the course of this paper, we have raised a number of questions, problems, and ideas that could be fruitful starting points for future research on the loneliness spectrum.  For the sake of convenience, we gather many of these points, and a few others, here.
\begin{itemize}
\item Prove the Spectrum Conjecture for $n=4$.  Is it feasible to extend the techniques that we used for the $n=3$ case?
\item Determine whether or not $1,2,3,4$ and $1,3,4,7$ are (up to scaling) the only tight speed sets for $4$ runners.
\item Determine the set of accumulation points for $n$ runners.  Are there any upper accumulation points?
\item In all of our applications of Theorem~\ref{thm:condition}, only $Q=0$ is admissible.  Is this the case for all tight speed sets?
\item Check the conditions in (II.b) of Theorem~\ref{thm:condition} for the families of tight speed sets in Goddyn and Wong \cite{goddyn}.
\item Establish the Spectrum Conjecture for positive integer speeds up to $\beta n$, for some $\beta>1.5$.
\item Is it possible to leverage Fourier-analytic techniques for proving results on the loneliness spectrum?
\item What consequences would the Spectrum Conjecture have for related areas, such as chromatic numbers of distance graphs?
\end{itemize}

\section*{Acknowledgements}
This paper was written in partial fulfillment of the Mathematics major senior requirement at Yale University.  I would like to thank my adviser Stefan Steinerberger for his support, encouragement, and feedback in both the brainstorming and writing stages of this project.  I developed many of this paper's ideas while running long distances, usually not around a track.


\begin{thebibliography}{}
\bibitem{alon} N. Alon, The chromatic number of random Cayley graphs, \emph{European J. Combin.} 34.8 (2013), 1232-1243.
\bibitem{barajas} J. Barajas and O. Serra, The lonely runner with seven runners, \emph{Electr. J. Combin.} 15.1 (2008), R48.
\bibitem{betke} U. Betke and J. M. Wills, Untere Schranken f\"{u}r zwei diophantische Approximations-Funktionen, \emph{Monatsh. Math.} 76.3 (1972), 214-217.
\bibitem{bienia} W. Bienia, L. Goddyn, P. Gvozdjak, A. Seb\H{o}, and M. Tarsi, Flows, view obstructions, and the lonely runner, \emph{J. Combin. Theory Ser. B} 72.1 (1998), 1-9.
\bibitem{bohman} T. Bohman, R. Holzman, and D. Kleitman, Six lonely runners, \emph{Electr. J. Combin.} 8(2) (2001), R3.
\bibitem{chen} Y. G. Chen, View-obstruction problems in $n$-dimensional Euclidean space and a generalization of them, \emph{Acta Math. Sinica} 37.4 (1994), 551-562.
\bibitem{chen2} Y. G. Chen and T. W. Cusick, The view-obstruction problem for four-dimensional cubes, \emph{J. Number Theory} 74.1 (1999), 126-133.
\bibitem{chow} S. Chow and L. Rimani\'{c}, Lonely runners in function fields, \emph{Mathematika} 65.3 (2019), 677-701.
\bibitem{cusick} T. W. Cusick, View-obstruction problems, \emph{Aequationes Math.} 9.2 (1973), 165-170.
\bibitem{pomerance} T. W. Cusick and C. Pomerance, View-obstruction problems, III, \emph{J. Number Theory} 19.2 (1984), 131-139.
\bibitem{czerwinski} S. Czerwi\'{n}ski, Random runners are very lonely, \emph{J. Combin. Theory Ser. A} 110.6 (2012), 1194-1199.
\bibitem{czerwinski2} S. Czerwi\'{n}ski, The lonely runner problem for lacunary sequences, \emph{Discrete Math.} 341.5 (2018), 1301-1306.
\bibitem{grytczuk} S. Czerwi\'{n}ski and J. Grytczuk, Invisible runners in finite fields, \emph{Inform. Process. Lett.} 108.2 (2008), 64-67.
\bibitem{dubickas2} A. Dubickas, An approximation by lacunary sequence of vectors, \emph{Combin. Probab. Comput.} 17.3 (2008), 339-345.
\bibitem{dubickas} A. Dubickas, The lonely runner problem for many runners, \emph{Glas. Mat. Ser. III} 46.66 (2011), no. 1, 2530.
\bibitem{goddyn} L. Goddyn and E. Wong, Tight instances of the lonely runner, \emph{Integers} 6 (2006), A38.
\bibitem{liu} D. D. F. Liu, From rainbow to the lonely runner: a survey on coloring parameters of distance graphs, \emph{Taiwanese J. Math.} 12.4 (2008), 851-871.
\bibitem{pandey} R. K. Pandey, A note on the lonely runner conjecture, \emph{J. Integer Seq.} 12 (2009), article 09.4.6.
\bibitem{perarnau} G. Perarnau and O. Serra, Correlation among runners and some results on the Lonely Runner Conjecture, \emph{Electr. J. Combin.} 23.1 (2016), P1.50.
\bibitem{peres} Y. Peres and W. Schlag, Two Erd\H{o}s problems on lacunary sequences: chromatic number and Diophantine approximation, \emph{Bull. Lond. Math. Soc.} 42.2 (2010), 295-300.
\bibitem{renault} J. Renault, View-obstruction: a shorter proof for $6$ lonely runners, \emph{Discrete Math.} 287.1-3 (2004), 93-101.
\bibitem{ruzsa} I. Ruzsa, Zs. Tuza, and M. Voigt, Distance graphs with finite chromatic number, \emph{J. Combin. Theory Ser. B} 85.1 (2002), 181-187
\bibitem{tao} T. Tao, Some remarks on the lonely runner conjecture, \emph{Contrib. Discrete Math.}, to appear (preprint arXiv:1701.02048v4, 2017).
\bibitem{tarsi} M. Tarsi, Nowhere zero flow and circuit covering in regular matroids, \emph{J. Combin. Theory Ser. B} 39.3 (1985), 346-352.
\bibitem{wills2} J. M. Wills, Zur simultanen homogenen diophantischen Approximation I, \emph{Monatsh Math.} 72.3 (1968), 254-263.
\bibitem{wills3} J. M. Wills, Zur simultanen homogenen diophantischen Approximation II, \emph{Monatsh Math.} 72.4 (1968), 368-381.
\bibitem{wills4} J. M. Wills, Zur simultanen homogenen diophantischen Approximation III, \emph{Monatsh Math.} 74.2 (1970), 166-171.
\bibitem{wills} J. M. Wills, Zwei S\"{atze} \"{u}ber inhomogene diophantische Approximation von Irrationalzahlen, \emph{Monatsh. Math.} 71.3 (1967), 263-269.
\bibitem{zhu} X. Zhu, Circular chromatic number of distance graphs with distance sets of cardinality $3$, \emph{J. Graph Theory} 41.3 (2003), 195-207.


\end{thebibliography}
\end{document}